\newcommand{\barr}{\overline}
\newcommand{\NN}{\mathbb{N}}
\newcommand{\cA}{\mathcal{A}}
\newcommand{\cB}{\mathcal{B}}
\newcommand{\cC}{\mathcal{C}}
\newcommand{\cD}{\mathcal{D}}
\newcommand{\cF}{\mathcal{F}}
\newcommand{\cP}{\mathcal{P}}
\newcommand{\cW}{\mathcal{W}}
\newtheorem{thm}{Theorem}[section]
\newtheorem{cor}[thm]{Corollary}
\newtheorem{lem}[thm]{Lemma}
\newtheorem{prop}[thm]{Proposition}
\newtheorem{example}{Example}
\theoremstyle{definition}
\newtheorem{define}[thm]{Definition}
\theoremstyle{remark}
\newtheorem{rem}[thm]{Remark}
\DeclareMathOperator{\Pro}{Pro}
\DeclareMathOperator{\Hom}{Hom}
\DeclareMathOperator{\precolim}{colim}
\def\colim{\mathop{\precolim}}
\def \mcal{\mathcal}
\DeclareTextFontCommand{\textcyr}{\fontencoding{OT2}\fontfamily{wncyr}\fontseries{m}\fontshape{n}\selectfont}
\begin{document}
\title{A new model for pro-categories}

\author{
Ilan Barnea \footnote{The first author is supported by the Alexander von Humboldt Professorship of Michael Weiss of the University of Muenster.} \and
Tomer M. Schlank \footnote{The second author is supported by the Simons fellowship in the Department of Mathematics of the Massachusetts Institute of Technology.}}

\maketitle

\begin{abstract}
In this paper we present a new way to construct the pro-category of a category. This new model is very convenient to work with in certain situations. We present a few applications of this new model, the most important of which solves an open problem of Isaksen \cite{Isa} concerning the existence of functorial factorizations in what is known as the strict model structure on a pro-category. Additionally we explain and correct an error in one of the standard references on pro-categories.
\end{abstract}

\tableofcontents

\section{Introduction}

Pro-categories introduced by Grothendieck \cite{SGA4-I} have found many applications over the years in fields such as algebraic geometry \cite{AM}, shape theory \cite{MS} and more. Generally speaking, given a category $\cC$ one can think of $\Pro(\cC)$ as the category of ``inverse systems" in $\cC$. When $\cC$ has finite limits $\Pro(\cC)$ can be shown to be equivalent to the category of left exact functors from $\cC$ to the category of Sets.  While the last model has  some functorial advantages,  the model of $\Pro(\cC)$ as inverse systems is very concrete and pictorial. In this paper we suggest a new model for $\Pro(\cC)$ which we shall denote by $\barr{\Pro}(\cC)$. We think of $\barr{\Pro}(\cC)$ as a model in which one considers only inverse systems indexed by cofinite directed posets of infinite height. The big advantage of such indexing is that it is very susceptible  to proofs by induction. The category $\barr{\Pro}(\cC)$ is  the homotopy category of a very natural 2-category $\widetilde{\Pro}(\cC)$ which makes working with pro-categories quite  natural.

Specifically, we use the new model to prove a few propositions concerning factorizations of morphisms in pro-categories. These will later be used to deduce certain facts about model structures on pro-categories. The most important conclusion of this paper will be solving an open problem of Isaksen \cite{Isa} concerning the existence of functorial factorizations in what is known as the strict model structure on a pro-category. In order to state our results more accurately we give some definitions in a rather brief way. For a more detailed account see Section ~\ref{s:prelim}.

First recall that the category $\Pro(\mcal{C})$ has as objects all diagrams in $\cC$ of the form $I\to \cC$ such that $I$ is small and directed (see Definition ~\ref{d_directed}). The morphisms are defined by the formula:
$$\Hom_{\Pro(\mcal{C})}(X,Y):=\lim\limits_s \colim\limits_t \Hom_{\mcal{C}}(X_t,Y_s).$$
Composition of morphisms is defined in the obvious way.

Note that not every map in $\Pro(\cC)$ is a natural transformation (the source and target need not even have the same indexing category). However, every natural transformation between objects in $\Pro(\cC)$ having the same indexing category induces a morphism in $\Pro(\cC)$ between these objects, in a rather obvious way.

Let $M$ be a class of morphisms in $\cC$. We denote by $Lw^{\cong}(M)$ the class of morphisms in $\Pro(\mcal{C})$ that are \textbf{isomorphic} to a morphism that comes from a natural transformation which is a levelwise $M$-map.

If $T$ is a partially ordered set, then we view $T$ as a category which has a single morphism $u\to v$ iff $u\geq v$. A cofinite poset is a poset $T$ such that for every $x$ in $T$ the set $T_x:=\{z\in T| z \leq x\}$ is finite.

Suppose now that $\mcal{C}$ has finite limits. Let $T$ be a small cofinite poset and $F:X\to Y$ a morphism in $\mcal{C}^T$. Then $F$ will be called a special $M$-map, if the natural map $X_t \to Y_t \times_{\lim \limits_{s<t} Y_s} \lim \limits_{s<t} X_s $ is in $M$, for every $t$ in $T$. We denote by $Sp^{\cong}(M)$ the class of morphisms in $\Pro(\mcal{C})$ that are \textbf{isomorphic} to a morphism that comes from a (natural transformation which is a) special $M$-map.


We now define the 2-category $\widetilde{\Pro}(\cC)$. A (strict) 2-category is a category enriched in categories. More particularly  $\widetilde{\Pro}(\cC) $ is a category enriched in posets. Since a poset can be considered as a 1-category we indeed get a structure of a 2-category
on $\widetilde{\Pro}(\cC)$.
Let $A$ be a cofinite directed set. We will say that $A$ has infinite height if for every $a$ in $A$ there exists $a'$ in $A$ such that $a<a'$. An object of the 2-category $\widetilde{\Pro}(\cC)$ is a diagram $F:A\to \cC$, such that $A$ is a cofinite directed set of infinite height.
If $F:A\to \cC$ and $G:B\to\cC$ are objects in $\widetilde{\Pro}(\cC)$, a  1-morphism  $f$ from $F$ to $G$ is a defined to be a pair $f=(\alpha_f,\phi_f)$, such that $\alpha_f:B\to A$ is a strictly increasing function, and $\phi_f:\alpha_f^*F=F\circ\alpha_f\to G$ is a natural transformation.

Given two strictly increasing maps $\alpha,\alpha':B \to A$ we write $\alpha' \geq \alpha$ if  for every $b$ in $B$ we have $\alpha'(b)\geq\alpha(b)$.
Now we define the partial order on the set of 1-morphisms from $F$ to $G$. We set $(\alpha',\phi')\geq(\alpha,\phi)$ iff $\alpha' \geq \alpha$ and for  every $b$ in $B$ the following diagram commutes:
$$
\xymatrix{ & F(\alpha(b)) \ar[dr]^{\phi_b} & \\
F(\alpha'(b))\ar[ur]\ar[rr]^{\phi'_b} & & G(b)}
$$
(the arrow $F(\alpha'(b))\to F(\alpha(b))$ is of course the one induced by the unique morphism $\alpha'(b)\to \alpha(b)$ in $A$).

Composition of 1-morphisms in  $\widetilde{\Pro}(\cC)$ is defined by the formula:
$$(\beta,\psi)\circ(\alpha,\phi)=(\alpha\circ\beta,\psi\circ\phi_{\beta}).$$

We define $\barr{\Pro}(\cC)$ to be the homotopy category of $\widetilde{\Pro}(\cC)$, that is, the one obtained by identifying every couple of 1-morphisms with a 2-morphism between them. Namely, a morphism between $F$ and $G$  in $\barr{\Pro}(\cC)$ is a connected component of the poset $Mor_{\widetilde{\Pro}(\cC)}(F,G)$. We will show (see Corollary ~\ref{c_directed}) that every such connected component is a directed poset.
Given a 1-morphism $f = (\alpha_f,\phi_f)$ in $\widetilde{\Pro}(\cC)$ we denote by $[f]= [\alpha_f,\phi_f]$ the corresponding morphism in $\barr{\Pro}(\cC)$.

There is a  natural functor:
$$i:\barr{\Pro}(\cC)\to \Pro(\cC),$$
the object function of this functor being the obvious one.
We will construct a functor
$S:\Pro(\cC)\to \barr{\Pro}(\cC)$
and prove (see Definition ~\ref{d:S} and Corollary ~\ref{c_equiv}):
\begin{prop}\label{p:equ_0}
The pair of functors:
$$i:\barr{\Pro}(\cC)\rightleftarrows \Pro(\cC):S$$
are inverse equivalences  of categories.
\end{prop}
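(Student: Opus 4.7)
The plan is to prove that the functor $i:\barr{\Pro}(\cC)\to\Pro(\cC)$ is both essentially surjective and fully faithful; from these two properties, the existence of a quasi-inverse $S$ (and hence the proposition) is formal, leaving only functoriality to check. Throughout, the crucial roles are played by cofiniteness of the indexing posets (which permits induction on $B$, since every $b\in B$ has only finitely many predecessors) and by infinite height (which provides the room to keep the reindexing functions $\alpha$ strictly increasing).

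For essential surjectivity, I would first invoke the standard reindexing lemma that every object of $\Pro(\cC)$ is isomorphic to one indexed by a small cofinite directed poset $A$. To upgrade to infinite height, replace $A$ by the product $A\times\NN$ with the coordinate-wise order: the projection $A\times\NN\to A$ is cofinal, so induces an isomorphism in $\Pro(\cC)$, while $A\times\NN$ is manifestly cofinite, directed, and of infinite height.

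For fully faithfulness, fix $F:A\to\cC$ and $G:B\to\cC$ in $\barr{\Pro}(\cC)$. I need the natural map from connected components of $Mor_{\widetilde{\Pro}(\cC)}(F,G)$ to $\lim\limits_b\colim\limits_a\Hom_\cC(F(a),G(b))$ to be a bijection. For surjectivity, given a compatible family $(f_b)_{b\in B}$ in the limit-colimit, build $(\alpha,\phi)$ by recursion on $B$: at stage $b$, having already fixed $\alpha$ on the finitely many predecessors of $b$, choose $\alpha(b)\in A$ large enough (i) to admit a representative $\phi_b:F(\alpha(b))\to G(b)$ of the colimit class $f_b$, (ii) to dominate each previously chosen $\alpha(b')$ strictly for $b'<b$ (here the infinite height of $A$ is essential), and (iii) to make the finitely many naturality squares $G(b\to b')\circ\phi_b=\phi_{b'}\circ F(\alpha(b)\to\alpha(b'))$ commute in $\cC$ on the nose — the two composites already agree as elements of $\colim\limits_a\Hom_\cC(F(a),G(b'))$ by compatibility of $f$, so enlarging $\alpha(b)$ within the filtered colimit makes them equal. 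For injectivity, by Corollary~\ref{c_directed} it suffices to produce a common upper bound of any two pairs $(\alpha,\phi),(\alpha',\phi')$ inducing the same map in $\Pro(\cC)$, and an entirely parallel induction (now with two functions to dominate at each stage) accomplishes this.

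The main obstacle is the simultaneous satisfaction, at each inductive step, of conditions (i)--(iii). All three can be met because $A$ is directed (giving upper bounds in the filtered colimit) and of infinite height (ensuring that upper bound can be taken \emph{strictly} above the finitely many previously chosen $\alpha(b')$); without the infinite-height hypothesis one could only arrange a weakly order-preserving $\alpha$, which would not define a 1-morphism of $\widetilde{\Pro}(\cC)$. Once the Hom-bijection is established, $S$ is defined by picking reindexings on objects as in the essential surjectivity argument and by inverting this bijection on morphisms; functoriality and the inverse-equivalence conclusion then follow directly.
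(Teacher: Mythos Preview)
Your proof is correct and, for fullness and faithfulness of $i$, follows essentially the same inductive argument as the paper (Propositions~\ref{p_full} and~\ref{p_directed}): build $\alpha$ and $\phi$ by recursion over the degree filtration of $B$, using directedness of $A$ to equalize the finitely many naturality constraints and infinite height to enforce strict monotonicity. One small point: your appeal to Corollary~\ref{c_directed} for injectivity is slightly circular in presentation, since that corollary is itself a consequence of the common-upper-bound construction you then carry out; it would be cleaner to state and prove the upper-bound statement directly (this is the paper's Proposition~\ref{p_directed}) and derive both faithfulness and Corollary~\ref{c_directed} from it.

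The one genuine difference is in essential surjectivity. You invoke the classical reindexing lemma (every pro-object is isomorphic to one indexed by a cofinite directed poset) as a black box and then pass to $A\times\NN$ to obtain infinite height. The paper instead builds, for each directed $I$, an explicit poset $A_I$ together with a cofinal functor $p_I:A_I\to I$ (Definition~\ref{d_onto}, Lemmas~\ref{l:A_is_directed} and~\ref{l:A_onto}), arranged so that $A_I$ is already cofinite directed of infinite height. Your route is shorter; the paper's has two advantages worth noting. First, the explicit $A_I$ gives a canonical, choice-free assignment $I\mapsto A_I$, which the paper uses to define $S$ constructively via Definition~\ref{d:inverse} rather than by an abstract essential-surjectivity argument. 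Second, the paper takes the opportunity to correct an error in one of the standard references for the reindexing lemma (\cite{EH}, Theorem 2.1.6); if you cite the classical result you should point to \cite{SGA4-I} rather than \cite{EH}.
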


In the proof of Proposition ~\ref{p:equ_0} we will use the classical theorem saying that for every small directed category $I$ there exists a cofinite directed set $A$ and a cofinal functor: $p:A\to I.$ In \cite{Isa}, Isaksen gives two references to this theorem: one is \cite{EH} Theorem 2.1.6 and the other is \cite{SGA4-I} Proposition 8.1.6. We take this opportunity to explain and correct a slight error in the proof given in \cite{EH} (see the discussion following Corollary ~\ref{p_onto}).

Now let $M$ be a class of morphisms in $\cC$. It is easy to see that the pre-image of   $Lw^{\cong}(M)$ under $i$ is the class of morphisms in $\barr{\Pro}(\mcal{C})$ that are \textbf{isomorphic} to a morphism of the form $[id, \phi]$ where $\phi$ is levelwise in $M$.
Similarly, the pre-image of   $Sp^{\cong}(M)$ under $i$ is the class of morphisms in $\barr{\Pro}(\mcal{C})$ that are \textbf{isomorphic} to a morphism of the form $[id, \phi]$ where $\phi$ is a special $M$ map. In light of Proposition ~\ref{p:equ_0} we abuse notation and denote $Lw^{\cong}(M)=i^{-1}(Lw^{\cong}(M))$ and
$Sp^{\cong}(M)=i^{-1}(Sp^{\cong}(M))$.


Now let $\mcal{C}$ be a category and $M$ a class of morphisms in $\cC$. We denote by:
\begin{enumerate}
\item $R(M)$ the class of morphisms in $\mcal{C}$ that are retracts of morphisms in $M$.
\item $^{\perp}M$ the class of morphisms in $\cC$ having the left lifting property with respect to all maps in $M$.
\item $M^{\perp}$ the class of morphisms in $\cC$ having the right lifting property with respect to all maps in $M$.
\end{enumerate}

Let $N$ and $M$ be classes of morphisms in $\cC$. We will say that there exists a factorization in $\cC$ into a morphism in $N$ followed by a morphism in $M$ (and denote $Mor(\mcal{C}) = M\circ N$) if every map $X\to Y $ in $\mcal{C}$ can be factored as $X\xrightarrow{q} L\xrightarrow{p} Y $ such that $q$ is in ${N}$ and $p$ is in $M$. The pair $(N,M)$ will be called a weak factorization system in $\cC$ (see \cite{Rie}) if the following hold:
\begin{enumerate}
\item $Mor(\mcal{C}) = M\circ N$.
\item $N=^{\perp}M$.
\item $N^{\perp}=M$.
\end{enumerate}

A \emph{functorial} factorization in $\cC$ is a functor:
$\cC^{\Delta^1}\to\cC^{\Delta^2}$ denoted:
$$(X\xrightarrow{f}Y)\mapsto ({X}\xrightarrow{q_f} L_{f}\xrightarrow{p_f}Y),$$
such that:
\begin{enumerate}
\item For any morphism $f$ in $\cC$ we have: $f=p_f\circ q_f$.
\item For any morphism:
$$\xymatrix{{X}\ar[r]^{f}\ar[d]_{l} & {Y}\ar[d]^{k}\\
            {Z} \ar[r]^t   &   {W},}$$
in $\cC^{\Delta^1}$ the corresponding morphism in $\cC^{\Delta^2}$ is of the form:
$$\xymatrix{{X}\ar[r]^{q_f}\ar[d]_{l} & L_{f}\ar[r]^{h_f}\ar[d]^{L_{(l,k)}} & {Y}\ar[d]^{k}\\
            {Z} \ar[r]^{q_t} & L_t\ar[r]^{p_t}    &   {W}.}$$
\end{enumerate}

The above functorial factorization is said to be into a morphism in $N$ followed be a morphism in $M$ if for every morphism $f$ in $\cC$ we have that $q_f$ is in $N$ and $p_f$ is in $M$.

We will denote $Mor(\mcal{C}) = ^{func}M\circ N$ if there exists a \emph{functorial} factorization in $\cC$ into a morphism in $N$ followed by a morphism in $M$. The pair $(N,M)$ will be called a \emph{functorial} weak factorization system in $\cC$ if the following hold:
\begin{enumerate}
\item $Mor(\mcal{C}) =^{func} M\circ N$.
\item $N=^{\perp}M$.
\item $N^{\perp}=M$.
\end{enumerate}

Note that $Mor(\mcal{C}) =^{func} M\circ N$ clearly implies $Mor(\mcal{C}) = M\circ N$.

\begin{prop}\label{p:fact}
Let $\mcal{C}$ be a category that has finite limits, and let $N$ and $M$ be classes of morphisms in $\cC$. Then:
\begin{enumerate}
\item $^{\perp}R(Sp^{\cong}(M))=^{\perp}Sp^{\cong}(M)=^{\perp}M.$
\item If $Mor(\mcal{C}) = M\circ N$ then $Mor(\barr{\Pro}(\mcal{C})) = Sp^{\cong}(M)\circ Lw^{\cong}(N)$.
\item If $Mor(\mcal{C}) = M\circ N$ and $N\perp M$ (in particular, if $(N,M)$ is a weak factorization system in $\cC$), then $(Lw^{\cong}(N),R(Sp^{\cong}(M)))$ is a weak factorization system in $\barr{\Pro}(\cC)$.
\end{enumerate}
\end{prop}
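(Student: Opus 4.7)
The backbone of all three parts is the ability, provided by Proposition \ref{p:equ_0}, to reindex any morphism in $\barr{\Pro}(\cC)$ to the form $[id, \phi]$ for a natural transformation $\phi$ over a cofinite directed poset $T$ of infinite height, and then to induct on the (finite) down-height of $t \in T$. For Part (1), the identity $^{\perp}R(Sp^{\cong}(M)) = {}^{\perp}Sp^{\cong}(M)$ is the formal fact that retracts are transparent to the lifting property. The inclusion $^{\perp}Sp^{\cong}(M) \subseteq {}^{\perp}M$ comes from embedding each $m \in M$ as an $Sp^{\cong}(M)$-map via a suitable diagram. For the reverse inclusion I take $f \in {}^{\perp}M$ and a special $M$-map $g$ over $T$, and solve each lifting problem $f \perp g$ by induction on the height of $t$: given the lifts at each $s < t$, the compatibility constraint reduces the obstruction at stage $t$ to a single lifting problem against the $M$-map $X_t \to Y_t \times_{\lim_{s<t} Y_s} \lim_{s<t} X_s$, which is solvable by $f \perp M$.

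For Part (2), after reindexing a given morphism to $f = [id, \phi]$ with $\phi: F \to G$ over $T$, I build $L: T \to \cC$ together with a factorization $F \to L \to G$ by induction on the height of $t$. Given $L(s)$ and $F(s) \xrightarrow{q_s} L(s) \xrightarrow{p_s} G(s)$ for all $s < t$, consider the induced morphism
\[ F(t) \longrightarrow G(t) \times_{\lim_{s<t} G(s)} \lim_{s<t} L(s), \]
whose target exists since the limits are finite (by cofiniteness of $T$). Factor this, using $Mor(\cC) = M \circ N$, as $F(t) \xrightarrow{q_t} L(t) \xrightarrow{p_t} G(t) \times_{\lim G(s)} \lim L(s)$, and extract the structure maps $L(t) \to L(s)$ and $L(t) \to G(t)$ from the two projections. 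Then $F \to L$ is levelwise in $N$, hence in $Lw^{\cong}(N)$, and $L \to G$ is special in $M$, hence in $Sp^{\cong}(M)$.

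Part (3) combines (1) and (2). The factorization $Mor(\barr{\Pro}(\cC)) = R(Sp^{\cong}(M)) \circ Lw^{\cong}(N)$ is immediate from (2) since $Sp^{\cong}(M) \subseteq R(Sp^{\cong}(M))$. For the lifting property $Lw^{\cong}(N) \perp R(Sp^{\cong}(M))$ it suffices to establish $Lw^{\cong}(N) \perp Sp^{\cong}(M)$, which is proved by the same inductive lift as in (1), this time using $N \perp M$ at each stage. The standard retract argument applied to the factorization of (2), combined with this lifting property, shows $^{\perp}R(Sp^{\cong}(M))$ is contained in the retract closure of $Lw^{\cong}(N)$ while $Lw^{\cong}(N)^{\perp}$ is contained in $R(Sp^{\cong}(M))$, immediately yielding $Lw^{\cong}(N)^{\perp} = R(Sp^{\cong}(M))$. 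The remaining identity requires that $Lw^{\cong}(N)$ itself is closed under retracts in $\barr{\Pro}(\cC)$, which, after reindexing a retract diagram onto a common poset, reduces to retract closure of $N$ in $\cC$. I expect the main obstacle to be the inductive bookkeeping in Parts (1) and (3): the special $M$-map condition is engineered precisely so that each stage's obstruction is an honest $M$-map, making the levelwise lifts collected through the induction glue into a natural transformation---but verifying this coherence at each stage, together with the reindexing machinery preceding each induction, draws on the full strength of the $\barr{\Pro}(\cC)$ formalism.
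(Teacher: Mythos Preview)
Your plan for Parts (1) and (2) is the paper's approach exactly: the inductive lifting argument is Lemma~\ref{l:SpMo_is_Mo}, and the Reedy-type factorization is Definition~\ref{d:reedy} together with Proposition~\ref{p_f}. Part (3) also follows the paper's route via Lemma~\ref{l:MN_LWSP} and Proposition~\ref{p_wfs}. One minor point: for $Lw^{\cong}(N)\perp Sp^{\cong}(M)$ the paper does not rerun the full induction of (1) with a levelwise-$N$ map on the left, but rather uses (1) to reduce to showing $Lw^{\cong}(N)\subseteq{}^{\perp}M$, which then needs only the easy observation that a lifting square against a simple object factors through a single level; your phrase ``using $N\perp M$ at each stage'' glosses over this reduction, but the idea is the same.

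There is, however, a genuine gap in your argument that $Lw^{\cong}(N)$ is closed under retracts. You claim this ``reduces to retract closure of $N$ in $\cC$'' after reindexing the retract diagram onto a common poset. This fails on two counts. First, the retract category (two objects $a,b$ with $i\colon a\to b$, $r\colon b\to a$, and $ri=id_a$) is not loopless, since $b$ carries the nontrivial idempotent $ir$; hence Corollary~\ref{p:natural} does not apply, and even if you reindex the underlying $\Delta^2$-shaped diagram of arrows, the equation $ri=id$ in $\barr{\Pro}(\cC)$ will not in general become a levelwise identity. Second, even granting levelwise retracts, $N$ is not assumed closed under retracts in $\cC$: the hypotheses $Mor(\cC)=M\circ N$ and $N\perp M$ only give $R(N)={}^{\perp}M$, which may strictly contain $N$, so at best you would land in $Lw^{\cong}(R(N))$ rather than $Lw^{\cong}(N)$. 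The paper avoids both issues by citing Isaksen's result, recorded as Lemma~\ref{l:ret_lw}, that $R(Lw^{\cong}(N))=Lw^{\cong}(N)$ holds for \emph{any} class $N$; that argument is not a simple reindexing and does not rely on $N$ being retract-closed.
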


The proof of Proposition ~\ref{p:fact} is strongly based on \cite{Isa} sections 4 and 5, and most of the ideas can be found there. The main novelty in this paper is the following theorem, proved in Section ~\ref{s_ff}:
\begin{thm}\label{t_main}
Let $\mcal{C}$ be a category that has finite limits, and let $N$ and $M$ be classes of morphisms in $\cC$. Then:
\begin{enumerate}
\item If $Mor(\mcal{C}) =^{func} M\circ N$ then $Mor(\barr{\Pro}(\mcal{C})) =^{func}  Sp^{\cong}(M)\circ Lw^{\cong}(N)$.
\item If $Mor(\mcal{C}) = ^{func}M\circ N$ and $N\perp M$ (in particular, if $(N,M)$ is a functorial weak factorization system in $\cC$), then $(Lw^{\cong}(N),R(Sp^{\cong}(M)))$ is a functorial weak factorization system in $\barr{\Pro}(\cC)$.
\end{enumerate}
\end{thm}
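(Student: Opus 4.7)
The plan is to use that every object of $\barr{\Pro}(\cC)$ is indexed by a cofinite directed poset of infinite height, so a factorization can be built level-by-level by induction on the indexing poset of the target. Proposition~\ref{p:fact}(2) already produces such a factorization; the task is to realize the whole construction as a functor $\barr{\Pro}(\cC)^{\Delta^1}\to \barr{\Pro}(\cC)^{\Delta^2}$. My strategy is to first build the factorization at the level of the 2-category $\widetilde{\Pro}(\cC)$ out of the $\cC$-level functorial factorization, and then descend to the homotopy categories. Part~(2) will follow from Part~(1) together with Proposition~\ref{p:fact}(3), since the factorization produced has middle morphism in $Sp^{\cong}(M)\subseteq R(Sp^{\cong}(M))$.

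For the construction on objects I would represent a morphism by a 1-morphism $f=(\alpha_f,\phi_f)\colon F\to G$, with $F\colon A\to \cC$ and $G\colon B\to \cC$, and define $L_f\colon B\to \cC$ together with natural transformations $F\circ\alpha_f\xrightarrow{q} L_f\xrightarrow{p} G$ by induction on $b\in B$ (using that $\{s<b\}$ is finite): having built $L_f(s)$ for $s<b$, apply the functorial factorization in $\cC$ to the map
$$F(\alpha_f(b))\longrightarrow G(b)\times_{\lim_{s<b}G(s)}\lim_{s<b}L_f(s)$$
to obtain $F(\alpha_f(b))\xrightarrow{q_b} L_f(b)\xrightarrow{p_b} G(b)\times_{\lim_{s<b}G(s)}\lim_{s<b}L_f(s)$, and read off the transition maps of $L_f$ and the entries of $q,p$ at $b$ from $p_b$. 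This yields a factorization of $f$ in $\widetilde{\Pro}(\cC)$ as $(\alpha_f,q)\in Lw^{\cong}(N)$ followed by $(\mathrm{id}_B,p)\in Sp^{\cong}(M)$.

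For functoriality at the 2-categorical level I would extend the construction to 1-morphisms of arrows: given representatives $l,k$ of the vertical arrows of a square together with a 2-morphism making it commute, and writing $k=(\beta_k,\psi_k)$ with $G'\colon B'\to \cC$, I would inductively define $L(l,k)\colon L_f\to L_{f'}$ over $b'\in B'$ by feeding the appropriate commutative square (built from $l$, $k$, the 2-morphism, and $L(l,k)$ restricted to $\{s'<b'\}$) into the functoriality of the $\cC$-level factorization. Uniqueness of this inductive construction, together with the functoriality of the $\cC$-level factorization, makes this assignment strictly functorial with respect to identities and composition.

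The main obstacle will be descent to $\barr{\Pro}(\cC)$: I must show that, up to the equivalence defining $\barr{\Pro}(\cC)$, $L(l,k)$ depends only on the classes $[l],[k]$ and on the commutative-square condition in $\barr{\Pro}(\cC)^{\Delta^1}$. To handle this I would compare any two representative choices by passing to a common upper bound, using that connected components of the hom-posets of $\widetilde{\Pro}(\cC)$ are directed by Corollary~\ref{c_directed}, and then verify inductively that the two inductively built intermediate objects and the comparison maps among them are linked by a 2-morphism in $\widetilde{\Pro}(\cC)$ refining both constructions. This reduces to naturality of the $\cC$-level functorial factorization on the 2-morphisms of $\widetilde{\Pro}(\cC)$ combined with uniqueness of the induction. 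Once this descent is established, Part~(1) is proved, and Part~(2) follows immediately from Proposition~\ref{p:fact}(3) as noted above.
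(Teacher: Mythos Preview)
Your overall strategy---Reedy-type factorization on objects, an inductive $\chi$-style construction on morphisms, and descent via the directedness of hom-posets (Corollary~\ref{c_directed})---is exactly the engine the paper uses. The organizational difference is significant, though. The paper does \emph{not} work directly in $\barr{\Pro}(\cC)^{\Delta^1}$. Instead it invokes the equivalences $j_{\Delta^n}\colon \barr{\Pro}(\cC^{\Delta^n})\xrightarrow{\sim}\barr{\Pro}(\cC)^{\Delta^n}$ of Corollary~\ref{p:natural} and constructs a \emph{strict} section $s_1$ of the composition functor $\circ_1\colon \barr{\Pro}(\cC^{\Delta^2})\to\barr{\Pro}(\cC^{\Delta^1})$; transporting $s_1$ along the $j$'s gives only a pseudo-section of $\circ_2$, which is then upgraded to a genuine functorial factorization by Corollary~\ref{c_wff_to_ff}. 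The payoff is that in $\barr{\Pro}(\cC^{\Delta^1})$ an \emph{object} is already a natural transformation $f\colon E^A\to F^A$ over a single indexing poset, so the Reedy factorization is applied with no choice of representative at all; the only descent issue is for \emph{morphisms}, which is precisely Lemma~\ref{l_reedy wd}.

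By contrast, your construction begins by choosing a representing $1$-morphism $(\alpha_f,\phi_f)$ for each object of $\barr{\Pro}(\cC)^{\Delta^1}$, and the resulting $L_f$ genuinely depends on that choice (for instance, its indexing poset is the target $B$ of the chosen representative). Your descent paragraph only argues that $L(l,k)$ depends on $[l],[k]$; it does not address why the object assignment $[f]\mapsto L_f$ is well-defined, nor how the comparison isomorphisms between different choices of representative for $[f]$ are coherent enough to assemble into an honest functor. This gap is not fatal---it can be closed by essentially the same directedness argument, and indeed amounts to reproving Corollary~\ref{p:natural} inside your construction---but the paper's detour through $\barr{\Pro}(\cC^{\Delta^1})$ removes it cleanly and isolates the one nontrivial verification (Lemma~\ref{l_reedy wd}). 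Your derivation of Part~(2) from Part~(1) and Proposition~\ref{p:fact}(3) matches the paper's Corollary~\ref{c_wfs}.
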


The factorizations constructed in the proof of Proposition ~\ref{p:fact} and Theorem ~\ref{t_main} both use Reedy type factorizations (see Section ~\ref{ss_reedy}). After passing from $\barr{\Pro}(C)$ to $\Pro(C)$, these are precisely the factorizations constructed by Edwards and Hastings in \cite{EH} and by Isaksen in \cite{Isa}. The main novelty here is that we show that these factorizations can be made functorial (given a functorial factorization in the original category). Here we use the convenience of working with    $\barr{\Pro}(\cC)$ as another model for $\Pro(\cC)$.

When working with pro-categories, it is frequently useful to have some kind of homotopy theory of pro-objects. Model categories, introduced in \cite{Qui}, provide a very general context in which it is possible to set up the basic machinery of homotopy theory. Given a category $\cC$, it is thus desirable to find conditions on $\cC$ under which $\Pro(\cC)$ can be given a model structure. It is natural to begin with assuming that $\cC$ itself has a model structure, and look for a model structure on $\Pro(\cC)$ which is in some sense induced by that of $\cC$. The following definition is based on the work of Edwards and Hastings \cite{EH}, Isaksen \cite{Isa} and others:

\begin{define}
Let $(\cC,\cW,\cF,\cC of)$ be a model category. The strict model structure on $\Pro(\cC)$ (if it exists) is defined by letting the acyclic cofibrations be $^{\perp}\cF$ and the cofibrations be  $^{\perp}(\cW\cap\cF)$.
\end{define}

This model structure is called the strict model structure on $\Pro(\cC)$ because several other model structures on the same category can be constructed from it through localization (which enlarges the class weak equivalences).

From Proposition ~\ref{p:fact} it clearly follows that in the strict model structure, if it exists, the cofibrations are given by $Lw^{\cong}(\cC of)$, the acyclic cofibrations are given by $Lw^{\cong}(\cW\cap\cC of)$, the fibrations are given by $R(Sp^{\cong}(\cF))$ and the acyclic fibrations are given by $R(Sp^{\cong}(\cF\cap\cW))$. The weak equivalences can then be characterized as maps that can be decomposed into an acyclic cofibration followed by an acyclic fibration.

Edwards and Hastings, in \cite{EH}, give sufficient conditions on a model category $\cC$ for the strict model structure on $\Pro(\cC)$ to exist. Isaksen, in \cite{Isa}, gives different sufficient conditions on $\cC$ and also shows that under these conditions the weak equivalences in the strict model structure on $\Pro(\cC)$ are given by $Lw^{\cong}(\cW)$.

\begin{rem}
It should be noted that we are currently unaware of any example of a model category $\cC$ for which one can show that the strict model structure on $\Pro(\cC)$ does not exist.
\end{rem}

The existence of the strict model structure implies that every map in $\Pro(\cC)$ can be factored into a (strict) cofibration followed by a (strict) trivial fibration, and into a (strict) trivial cofibration followed by a (strict) fibration. However, the existence of functorial factorizations of this form was not shown, and remained an open problem (see \cite{Isa}  Remark 4.10 and \cite{Cho}). The existence of functorial factorizations in a model structure is important for many constructions (such as framing, derived functor (between the model categories themselves) and more). In more modern treatments of model categories (such as \cite{Hov} or \cite{Hir}) it is even part of the axioms for a model structure.

From Theorem ~\ref{t_main} it clearly follows that if $\cC$ is a model category in the sense of \cite{Hov} or \cite{Hir}, that is, a model category with functorial factorizations, and if the strict model structure on $\Pro(\cC)$ exists, then the model structure on $\Pro(\cC)$ also admits functorial factorizations.

\subsection{Organization of the paper}
In Section ~\ref{s:prelim} we bring a short review of the necessary background on pro-categories. Some of the definitions and lemmas in this section are slightly non-standard.  In Section ~\ref{s:barr} we will define the category $\barr{\Pro}(\cC)$, and show its equivalence to $\Pro(\cC)$. We also explain and correct a slight error in \cite{EH} Theorem 2.1.6. In Section ~\ref{s_fact} we prove Proposition ~\ref{p:fact}. In Section ~\ref{s_ff} we prove our main result, namely Theorem ~\ref{t_main}.

\subsection{Acknowledgments}
We would like to thank the referee for his useful suggestions.

\section{Preliminaries on pro-categories}\label{s:prelim}

In this section we bring a short review of the necessary background on pro-categories. Some of the definitions and lemmas given here are slightly non-standard. For more details we refer the reader to \cite{AM}, \cite{EH}, and \cite{Isa}.

\begin{define}\label{d_directed}
A category $I$ is called \emph{directed} if the following conditions are satisfied:
\begin{enumerate}
\item The category $I$ is non-empty.
\item For every pair of objects $s$ and $t$ in $I$, there exists an object $u$ in $I$, together with
morphisms $u\to s$ and $u\to t$.
\item For every pair of morphisms $f,g:s\to t$ in $I$, there exists a morphism $h:u\to s$ in $I$, such that $f\circ h=g\circ h$.
\end{enumerate}
\end{define}

If $T$ is a partially ordered set, then we view $T$ as a category which has a single morphism $u\to v$ iff $u\geq v$. Note that this convention is opposite from the one used by some authors. Thus, a poset $T$ is directed iff $T$ is non-empty, and for every $a,b$ in $T$, there exists an element $c$ in $T$ such that $c\geq a,c\geq b$. In the following, instead of saying ``a directed poset" we will just say ``a directed set".

\begin{define}\label{def CDS}
A cofinite poset is a poset $T$ such that for every element $x$ in $T$ the set $T_x:=\{z\in T| z \leq x\}$ is finite.
\end{define}

\begin{define}\label{def_deg}
Let $A$ be a cofinite poset. We define the degree function of $A$: $d=d_A:A\to\NN$, by:
$$d(a):=max\{n\in \NN|\exists a_0<\cdots<a_n=a\}.$$
For every $n\geq -1$ we define: $A^n:=\{a\in A|d(a)\leq n\}$ $(A^{-1}=\phi)$.
\end{define}
Thus $d:A\to\NN$ is a strictly increasing function. The degree function enables us to define or prove things concerning $A$ inductively, since clearly: $A=\bigcup_{n\geq 0}A^n$. Many times in this paper, when defining (or proving) something inductively, we will skip the base stage. This is because we begin the induction from $n=-1$, and since $A^{-1}=\phi$ there is nothing to define (or prove) in this stage. The skeptic reader can check carefully the first inductive step to see that this is justified.

\begin{define}\label{d:section}
Let $T$ be a partially ordered set, and let $A$ be a subset of $T$. We shall say that $A$ is a (lower) \emph{section} of $T$, if for every $x$ in $A$ and $y$ in $T$ such that $y <x$, we have that $y$ is also in $A$.
\end{define}

\begin{example}
$T$ is a section of $T$. If $t$ is a maximal element in $T$, then $T \backslash \{t\}$ is a section of $T$. For any $t$ in $T$, the subset $T_t$ (see Definition ~\ref{def CDS}) is a section of $T$.
\end{example}

\begin{define}
Let $\cC$ be a category. The category $\cC^{\lhd}$ has as objects: $Ob(\cC)\coprod{\infty}$, and the morphisms are the morphisms in $\cC$, together with a unique morphism: $\infty\to c$, for every object $c$ in $\cC$.
\end{define}

In particular, if $\cC=\phi$ then $\cC^{\lhd}=\{\infty\}$.

Note that if $A$ is a cofinite poset and $a$ is an element in $A$ of degree $n$, then $A_a$ is naturally isomorphic to $(A_a^{n-1})^{\lhd}$ (where $A_a^{n-1}$ is just $(A_a)^{n-1}$, see Definition ~\ref{def CDS}).

The following lemma is clear, but we include it for later
reference.
\begin{lem}\label{l:eqiv_directed}
A cofinite poset $A$ is directed iff for every finite section $R$ of $A$ (see Definition ~\ref{d:section}),
there exists an element $c$ in $A$ such that $c\geq r$, for every $r$ in $R$.
A category $\cC$ is directed iff for every finite poset $R$, and for every functor $F:R\to \cC$, there exists an object $c$ in $\cC$, together with compatible morphisms $c\to F(r)$, for every $r$ in $R$ (that is, a morphism $Diag(c)\to F$ in $\cC^R$, or equivalently we can extend the functor $F:R\to \cC$ to a functor $R^{\lhd}\to \cC$).
\end{lem}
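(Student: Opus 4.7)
The plan is to prove both equivalences by induction on the size of the diagram.

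For the first statement (cofinite poset $A$), the backward direction is immediate: given $a, b \in A$, the set $R := A_a \cup A_b$ is a lower section, and it is finite by cofiniteness of $A$, so the hypothesis produces a common upper bound; non-emptiness of $A$ comes from applying the hypothesis to $R = \emptyset$. For the forward direction, I induct on $|R|$. The base case $R = \emptyset$ is handled by non-emptiness of $A$. Otherwise, since $R$ is a section, any maximal element $r_0$ of $R$ has $R' := R \setminus \{r_0\}$ still a section. By induction, $R'$ admits an upper bound $c'$; applying condition 2 of directedness to the pair $c', r_0$ produces the required upper bound of $R$.

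For the second statement, the forward direction proceeds by induction on $|\mathrm{Ob}(R)|$. The empty case gives non-emptiness of $\cC$. Otherwise, pick a maximal element $r_0 \in R$ and set $R' := R \setminus \{r_0\}$; by induction, $F|_{R'}$ admits a cone $\{\phi_r: c' \to F(r)\}_{r \in R'}$. Condition 2 of directedness applied to $c'$ and $F(r_0)$ produces $c''$ with maps $\alpha: c'' \to c'$ and $\beta: c'' \to F(r_0)$. For each $r \in R'$ with $r < r_0$, the two composites $\phi_r \circ \alpha$ and $F(r_0 \to r) \circ \beta$ are parallel arrows $c'' \to F(r)$, which I would equalize by iterating condition 3: one application equalizes one such pair via some $h: u \to c''$, and pre-composing with further equalizing maps preserves previous equalizations. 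After finitely many steps (one per such pair), the maps $u \to c' \to F(r)$ (for $r \in R'$) together with $u \to F(r_0)$ assemble into the desired cone on $F$.

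The backward direction of the second statement recovers non-emptiness and pairwise condition 2 from the hypothesis applied to the empty poset and to the discrete two-element poset, respectively; condition 3 follows similarly by an iterated application of the cone property. The main technical point in the proof is the inductive step of the forward direction of the second statement: one must manage the iterated application of condition 3 so that all compatibility conditions for $r < r_0$ hold simultaneously, but this is clean because once equalization is achieved for one pair via $u \to c''$, any further factoring $u' \to u$ retains that equalization. This structural induction on $|R|$ is exactly the inductive style on cofinite directed posets that the paper emphasizes, and it is what makes the new model $\barr{\Pro}(\cC)$ convenient.
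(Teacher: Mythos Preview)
The paper states this lemma without proof, declaring it ``clear'', so there is no paper argument to compare against. Your proof is correct and complete for the first statement (the cofinite-poset case) and for the forward direction of the second statement; the induction on $|R|$, removing a maximal element and then iterating axiom~(3) to equalize the finitely many compatibility constraints at $r_0$, is exactly the right idea and works as written.

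However, your treatment of the \emph{backward} direction of the second statement is too cursory. You correctly recover axioms~(1) and~(2) from the empty and discrete two-point posets, but for axiom~(3) you write only that it ``follows similarly by an iterated application of the cone property'', and this does not go through as stated. The natural first move---taking the cospan poset $\{a>c<b\}$ with $F(a)=F(b)=s$, $F(c)=t$, $F(a\to c)=f$, $F(b\to c)=g$---produces some $u$ and $\alpha,\beta:u\to s$ with $f\alpha=g\beta$, not $f\alpha=g\alpha$. Iterating the cospan on the new parallel pair $\alpha,\beta$ yields $v$ and $\alpha',\beta':v\to u$ with $\alpha\alpha'=\beta\beta'=:h$, but then $fh=f\alpha\alpha'=g\beta\alpha'$ while $gh=g\beta\beta'$, and there is no reason for $g\beta\alpha'$ and $g\beta\beta'$ to coincide. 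So the naive iteration does not converge to an equalizing map, and a genuinely different argument (or a different choice of finite poset) is needed here if this direction is to be established. Note that only the forward direction of the second part is actually used later in the paper (in the proof that $A_I$ is directed), so this gap does not affect the paper's main results.
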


A category is called small if it has a small set of objects and a small set of morphisms

\begin{define}\label{def_pro}
Let $\mcal{C}$ be a category. The category $\Pro(\mcal{C})$ has as objects all diagrams in $\cC$ of the form $I\to \cC$ such that $I$ is small and directed (see Definition ~\ref{d_directed}). The morphisms are defined by the formula:
$$\Hom_{\Pro(\mcal{C})}(X,Y):=\lim\limits_s \colim\limits_t \Hom_{\mcal{C}}(X_t,Y_s).$$
Composition of morphisms is defined in the obvious way.
\end{define}

Thus, if $X:I\to \mcal{C}$ and $Y:J\to \mcal{C}$ are objects in $\Pro(\mcal{C})$, giving a morphism $X\to Y$ means specifying, for every $s$ in $J$, a morphism $X_t\to Y_s$ in $\mcal{C}$, for some $t$ in $I$. These morphisms should of course satisfy some compatibility condition. In particular, if the indexing categories are equal: $I=J$, then any natural transformation: $X\to Y$ gives rise to a morphism $X\to Y$ in $\Pro(C)$. More generally, if $p:J\to I$ is a functor, and $\phi:p^*X:=X\circ p\to Y$ is a natural transformation, then the pair $(p,\phi)$ determines a morphism $\nu_{p,\phi}:X\to Y$ in $\Pro(C)$ (for every $s$ in $J$ we take the morphism $\phi_s:X_{p(s)}\to Y_s$). In particular, taking $Y=p^*X$ and $\phi$ to be the identity natural transformation, we see that $p$ determines a morphism $\nu_{p,X}:X\to p^*X$ in $\Pro(C)$.

Let $f:X\to Y$ be a morphism in $\Pro(\cC)$. A morphism in $\cC$ of the form $X_r\to Y_s$, that represents the $s$ coordinate of $f$ in $\colim\limits_{t\in I} \Hom_{\mcal{C}}(X_t,Y_s)$, will be called ``representing $f$".

The word pro-object refers to objects of pro-categories. A \emph{simple} pro-object
is one indexed by the category with one object and one (identity) map. Note that for any category $\mcal{C}$, $\Pro(\mcal{C})$ contains $\mcal{C}$ as the full subcategory spanned by the simple objects.

\begin{define}\label{d:cofinal}
Let $p:J\to I$ be a functor between small categories. The functor $p$ is said to be (left) \emph{cofinal} if for every $i$ in $I$, the over category ${p}_{/i}$ is nonempty and connected.
\end{define}

Cofinal functors play an important role in the theory of pro-categories mainly because of the following well known lemma (see for example \cite{EH}):

\begin{lem}\label{l:cofinal}
Let $p:J\to I$ be a cofinal functor between small directed categories, and let $X:I\to \cC$ be an object in $\Pro(\cC)$. Then the morphism in $\Pro(\cC)$ defined by $p$: $\nu_{p,X}:X\to p^*X$ is an isomorphism.
\end{lem}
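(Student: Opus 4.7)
The plan is to construct an explicit two-sided inverse $q : p^*X \to X$ to $\nu_{p,X}$ in $\Pro(\cC)$. By the defining formula
$$\Hom_{\Pro(\cC)}(p^*X, X) = \lim_{i \in I} \colim_{j \in J} \Hom_{\cC}(X_{p(j)}, X_i),$$
specifying $q$ amounts to choosing, for each $i \in I$, an element of the filtered colimit $\colim_{j \in J} \Hom_{\cC}(X_{p(j)}, X_i)$ in a way compatible as $i$ varies in $I$. Using the nonemptiness of $p_{/i}$, I would pick any $(j, \alpha : p(j) \to i) \in p_{/i}$ and set the $i$-th coordinate of $q$ to be the class of $X(\alpha) : X_{p(j)} \to X_i$.

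The first thing to verify is that this class is independent of the chosen $(j, \alpha)$, so that each coordinate of $q$ is well defined. Here the connectedness of $p_{/i}$ is essential: any two objects of $p_{/i}$ are joined by a zigzag, and each morphism $(j,\alpha) \to (j',\alpha')$ in $p_{/i}$ is a morphism $\beta : j \to j'$ in $J$ with $\alpha = \alpha' \circ p(\beta)$; consequently $X(\alpha) = X(\alpha') \circ X(p(\beta))$ exhibits the two representatives as equal in the filtered colimit $\colim_j \Hom_{\cC}(X_{p(j)}, X_i)$. Compatibility as $i$ varies is then immediate: for a morphism $i \to i'$ in $I$, if $(j,\alpha)$ represents the coordinate at $i$ then $(j, (i \to i') \circ \alpha)$ is a valid representative at $i'$, and well-definedness reconciles this with any other choice made at $i'$.

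To finish, I would check both composites directly. The $i$-th coordinate of $q \circ \nu_{p,X}$ is represented by $X(\alpha) : X_{p(j)} \to X_i$ at index $p(j) \in I$, while $\mathrm{id}_X$ has $i$-th coordinate $\mathrm{id}_{X_i}$ at index $i$; these agree in $\colim_t \Hom_{\cC}(X_t, X_i)$ because $p(j) \geq i$ via $\alpha$, so both become the common map $X(\alpha)$ when pulled back to stage $t = p(j)$. For the composite $\nu_{p,X} \circ q$, by well-definedness one may compute the $p(j)$-th coordinate of $q$ using the tautological representative $(j, \mathrm{id}_{p(j)}) \in p_{/p(j)}$, which yields $\mathrm{id}_{X_{p(j)}}$ and hence $\nu_{p,X} \circ q = \mathrm{id}_{p^*X}$ on the nose. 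The main obstacle is purely bookkeeping: keeping the paper's variance conventions straight (in particular that $u \to v$ in a directed poset means $u \geq v$) and carefully tracking the direction of the structure maps in the filtered colimits throughout.
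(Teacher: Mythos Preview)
Your proof is correct and is the standard direct construction of an inverse. Note, however, that the paper does not actually supply a proof of this lemma: it is stated as a ``well known lemma'' with a reference to \cite{EH}, so there is no in-paper argument to compare against. Your approach is precisely the expected one found in such references, and your handling of the well-definedness via connectedness of $p_{/i}$ and of the two composites is accurate under the paper's conventions.
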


\begin{define}\label{def natural}
Let $\mcal{C}$ be a category with finite limits, $M$ a class of morphisms in $\mcal{C}$, $I$ a small category and $F:X\to Y$ a morphism in $\mcal{C}^I$. Then $F$ will be called:
\begin{enumerate}
\item A \emph{levelwise} $M$-\emph{map}, if for every $i$ in $I$: the morphism $X_i\to Y_i$ is in $M$. We will denote this by $F\in Lw(M)$.
\item A \emph{special} $M$-\emph{map}, if the following hold:
    \begin{enumerate}
    \item The indexing category $I$ is a cofinite poset (see Definition ~\ref{def CDS}).
    \item The natural map $X_t \to Y_t \times_{\lim \limits_{s<t} Y_s} \lim \limits_{s<t} X_s $ is in $M$, for every $t$ in $ I$.
    \end{enumerate}
    We will denote this by $F\in Sp(M)$.
\end{enumerate}
\end{define}

Let $\mcal{C}$ be a category. Given two morphisms $f,g$ in $\cC$ we denote by $f \perp g$ the fact
that $f$ has the left lifting property with respect to $g$.
If $M,N$ are classes of morphisms in $\cC$, we denote by $M \perp N$ the fact that $f\perp g$ for every $f $ in $M$ and $g$ in $N$.

\begin{define}\label{def mor}
Let $\mcal{C}$ be a category with finite limits, and $M \subseteq Mor(\mcal{C})$ a class of morphisms in $\mcal{C}$. Denote by:
\begin{enumerate}
\item $R(M)$ the class of morphisms in $\mcal{C}$ that are retracts of morphisms in $M$. Note that $R(R(M))=R(M)$.
\item ${}^{\perp}M$ the class of morphisms in $\mcal{C}$ having the left lifting property with respect to all the morphisms in $M$.
\item $M^{\perp}$ the class of morphisms in $\mcal{C}$ having the right lifting property with respect to all the morphisms in $M$.
\item $Lw^{\cong}(M)$ the class of morphisms in $\Pro(\mcal{C})$ that are \textbf{isomorphic} to a morphism that comes from a natural transformation which is a levelwise $M$-map.
\item $Sp^{\cong}(M)$ the class of morphisms in $\Pro(\mcal{C})$ that are \textbf{isomorphic} to a morphism that comes from a natural transformation which is a special $M$-map.
\end{enumerate}
\end{define}

Note that:
$$
(M \subset  {}^\perp N) \Leftrightarrow (N \subset   M^\perp) \Leftrightarrow (M \perp N).
$$

The following lemma appears in \cite{Isa}, Proposition 2.2. We include it here for later reference.
\begin{lem}\label{l:ret_lw}
Let $M$ be any class of morphisms in $\mcal{C}$. Then $$R(Lw^{\cong}(M)) = Lw^{\cong}(M).$$
\end{lem}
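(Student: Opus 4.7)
The inclusion $Lw^{\cong}(M) \subseteq R(Lw^{\cong}(M))$ is immediate, since every morphism is a retract of itself. For the reverse inclusion I would take $f \colon X \to Y$ presented as a retract in $\Pro(\cC)$ of some $g \colon X' \to Y'$ lying in $Lw^{\cong}(M)$. After replacing $g$ by an isomorphic representative and transporting the retract data across that isomorphism, one may assume $g$ is itself a natural transformation $X', Y' \colon A \to \cC$ on a cofinite directed set $A$, with $g_a \in M$ for every $a \in A$. The plan is then to strictify the entire retract diagram so that the retract property becomes levelwise, at which point the conclusion will follow.

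The key step is to reindex $X$ and $Y$ so that all four functors $X, X', Y, Y'$ are defined on a single cofinite directed set $B$ (a cofinal refinement of $A$) and all six arrows of the retract diagram $X \to X' \to X$, $Y \to Y' \to Y$, $f, g$ arise from genuine natural transformations. This would be done by iteratively applying Lemma~\ref{l:cofinal} together with the representability of each individual pro-morphism by compatible families of maps at cofinally many indices, assembling the data one arrow at a time; this is precisely the kind of bookkeeping that the model $\barr{\Pro}(\cC)$ introduced in Section~\ref{s:barr} is designed to organize transparently. Once the retract diagram is presented by natural transformations on the common base $B$, the identities $r_X \circ i_X = \mathrm{id}_X$ and $r_Y \circ i_Y = \mathrm{id}_Y$ hold levelwise, so for each $b \in B$ the morphism $\widetilde{f}_b$ is a retract in $\cC$ of $\widetilde{g}_b \in M$, and thus lies in $M$; therefore $f$ is isomorphic in $\Pro(\cC)$ to the levelwise $M$-map $\widetilde{f}$, giving $f \in Lw^{\cong}(M)$.

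The main obstacle will be the strictification step: because the source and target of an arrow in $\Pro(\cC)$ need not share an indexing category, one has to inductively choose cofinal refinements that accommodate one piece of the retract data at a time while preserving the compatibilities already installed, and then verify that the resulting levelwise diagram still represents the original retract in $\Pro(\cC)$. Once the retract diagram has been put into levelwise form over a common cofinite directed indexing set, the remainder of the argument is entirely formal.
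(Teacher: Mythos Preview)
Your argument has a gap at the final step. Even if the retract diagram can be strictified so that $r_X \circ i_X$ and $r_Y \circ i_Y$ become the identity levelwise, the conclusion is only that each $\widetilde f_b$ is a retract in $\cC$ of $\widetilde g_b \in M$, hence $\widetilde f_b \in R(M)$. Your claim ``and thus lies in $M$'' is false for an arbitrary class $M$; what your outline actually establishes is $R(Lw^{\cong}(M)) \subseteq Lw^{\cong}(R(M))$, which is strictly weaker than the lemma as stated. There is also a secondary difficulty with the strictification itself: the retract shape carries a nontrivial idempotent endomorphism and is therefore not loopless, so Corollary~\ref{p:natural} does not apply directly, and arranging the composites to be identities on the nose requires a separate argument that you have not supplied.

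The paper gives no proof here, citing \cite{Isa}, Proposition~2.2. The idea needed is \emph{not} to exhibit $f$ as a levelwise retract of $g$, but to reindex so that $f$ becomes isomorphic to a natural transformation whose components are literally the maps $g_a \in M$. One way to do this: replace $X$ by a pro-object whose levels are the $X'_a$ but whose structure maps are twisted by (representatives of) the idempotent $i_X r_X$; this new pro-object is isomorphic to $X$ rather than to $X'$, and similarly for $Y$, while the map between them is $g_a \in M$ at every level. This sidesteps the $R(M)$ versus $M$ problem entirely.
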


The following lemma is an easy diagram chase. We include it for later reference.
\begin{lem}\label{c:ret_lift}
Let $M$ be any class of morphisms in $\mcal{C}$. Then:
$$(R(M))^{\perp} = M^{\perp},\; {}^{\perp}(R(M)) = {}^{\perp}M,$$
$$R(M^{\perp}) = M^{\perp},\; R({}^{\perp}M) = {}^{\perp}M.$$
\end{lem}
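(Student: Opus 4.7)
The plan is to verify each of the four equalities by a routine lifting-square diagram chase, using that if $f$ is a retract of $f'$ then every commutative square with $f$ on one side can be pre-composed with the retract data to produce a commutative square with $f'$ on that side (and vice-versa via the section).

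First, I would handle the two equalities $(R(M))^{\perp}=M^{\perp}$ and ${}^{\perp}(R(M))={}^{\perp}M$ together. The inclusions $(R(M))^{\perp}\subseteq M^{\perp}$ and ${}^{\perp}(R(M))\subseteq {}^{\perp}M$ are immediate from $M\subseteq R(M)$. For the reverse inclusion of the first, I would take $g\in M^{\perp}$ and $f\in R(M)$, so that there is a retract diagram
$$
\xymatrix{
A \ar[r]^{i} \ar[d]_{f} & A' \ar[r]^{r} \ar[d]_{f'} & A \ar[d]^{f}\\
B \ar[r]^{j} & B' \ar[r]^{s} & B
}
$$
with $ri=\mathrm{id}_A$, $sj=\mathrm{id}_B$ and $f'\in M$. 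Given any lifting problem posed by a square from $f$ to $g$, I precompose both sides with $(i,j)$ to produce a lifting problem from $f'$ to $g$, obtain a lift $\ell'$ since $g\in M^{\perp}$, and then define the desired lift as $\ell'\circ j$ (using $sj=\mathrm{id}_B$ and the commutativity of the retract to check that it lifts the original square). The proof that ${}^{\perp}M\subseteq {}^{\perp}(R(M))$ is formally dual, using a retract diagram on the right-hand arrow instead.

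Next, for $R(M^{\perp})=M^{\perp}$, the inclusion $\supseteq$ is trivial. For the inclusion $\subseteq$, I would take $g\in R(M^{\perp})$, witnessed by a retract diagram exhibiting $g$ as a retract of some $g'\in M^{\perp}$. Given $f\in M$ and a lifting square from $f$ to $g$, I post-compose with the inclusion part of the retract diagram to obtain a lifting problem from $f$ to $g'$, find a lift using $g'\in M^{\perp}$, and then post-compose with the section to recover a lift for the original square. The fourth equality $R({}^{\perp}M)={}^{\perp}M$ is proved by the formally dual argument on the left-hand arrow.

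There is no real obstacle here: every step is a standard check that the maps built by pasting the retract data with the given lift actually fit together to solve the original lifting problem; this reduces to verifying a handful of commutative triangles using the retract identities $ri=\mathrm{id}$ and $sj=\mathrm{id}$. No properties of $\cC$ beyond the existence of the morphisms in the retract are used, so the lemma holds in arbitrary categories.
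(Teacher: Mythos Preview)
Your argument is correct and is exactly the ``easy diagram chase'' the paper alludes to (the paper gives no further proof). One small slip: in the first reverse inclusion you say you ``precompose both sides with $(i,j)$'' to obtain a square from $f'$ to $g$, but to get maps out of $A'$ and $B'$ you must precompose with $(r,s)$; the rest of your argument (taking the lift to be $\ell'\circ j$ and invoking $sj=\mathrm{id}_B$) is consistent with this and shows you have the right picture.
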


\begin{lem}\label{l:SpMo_is_Mo}
${}^{\perp}Sp^{\cong}(M) = {}^{\perp}M$.
\end{lem}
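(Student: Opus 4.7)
The plan is to prove two inclusions. One direction is formal: any $g\in M$, regarded as a morphism of simple pro-objects in $\Pro(\cC)$, is a special $M$-map indexed by the one-point (cofinite, directed) poset---the natural map in the definition reduces to $g$ itself because the empty limit is terminal---so $M\subseteq Sp^{\cong}(M)$, and taking left orthogonals yields ${}^{\perp}Sp^{\cong}(M)\subseteq {}^{\perp}M$.

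For the reverse inclusion, I would take $f\in {}^{\perp}M$. Since lifting properties are invariant under isomorphism, it suffices to check $f\perp g$ for every morphism $g:X\to Y$ in $\Pro(\cC)$ arising from a natural transformation that is a special $M$-map on a cofinite directed poset $T$. Given a square $A\to X$, $B\to Y$ with left edge $f$, my plan is to construct the lift $B\to X$ by induction on the degree function $d:T\to\NN$, producing at each stage a morphism $\sigma_t:B\to X_t$ in $\Pro(\cC)$ so that the family $\{\sigma_t\}_{t\in T}$ is compatible with the transition maps of $X$ and solves the square coordinate-wise.

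In the inductive step at $t$ (which absorbs the base case $\{s<t\}=\emptyset$), the key input is that, because $\cC$ has finite limits and $T_t\setminus\{t\}$ is finite (as $T$ is cofinite),
$$\Hom_{\Pro(\cC)}\bigl(B,\textstyle\lim_{s<t}X_s\bigr)\;=\;\colim_r\lim_{s<t}\Hom_\cC(B_r,X_s)\;=\;\lim_{s<t}\Hom_{\Pro(\cC)}(B,X_s),$$
using that filtered colimits of sets commute with finite limits. Thus the already constructed compatible $\sigma_s$ for $s<t$ package into a morphism $B\to\lim_{s<t}X_s$ in $\Pro(\cC)$, which combined with $B\to Y\to Y_t$ yields a map into the pullback $Y_t\times_{\lim_{s<t}Y_s}\lim_{s<t}X_s$. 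Since the natural map $X_t\to Y_t\times_{\lim_{s<t}Y_s}\lim_{s<t}X_s$ lies in $M$, the hypothesis $f\perp M$ supplies a lift $\sigma_t$; as $\sigma_t$ factors through the pullback, composing with the projection to each $X_s$ automatically recovers $\sigma_s$, so compatibility propagates.

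Finally, $X=\lim_t X_t$ in $\Pro(\cC)$ directly from the definition of $\Hom_{\Pro(\cC)}$, so the compatible family $\{\sigma_t\}$ assembles into a morphism $\sigma:B\to X$ making the original square commute. The principal technical obstacle is the correct interpretation of the finite limits appearing in the definition of \emph{special} as pro-objects in $\Pro(\cC)$; once the displayed $\Hom$-identification is established, the Reedy-style inductive lift proceeds without further difficulty.
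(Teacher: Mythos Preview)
Your proposal is correct and follows essentially the same approach as the paper: both prove the nontrivial inclusion by an induction on the degree function of the cofinite indexing poset, at each stage packaging the previously constructed lifts into a map to $\lim_{s<t}X_s$, combining with $B\to Y_t$ to land in the pullback, and then lifting against the map $X_t\to Y_t\times_{\lim_{s<t}Y_s}\lim_{s<t}X_s\in M$. The only cosmetic difference is that you make explicit the identification $\Hom_{\Pro(\cC)}(B,\lim_{s<t}X_s)\cong\lim_{s<t}\Hom_{\Pro(\cC)}(B,X_s)$ via commutation of filtered colimits with finite limits, whereas the paper leaves this implicit and instead spells out the two-stage verification that the lifting square commutes.
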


\begin{rem}
The idea of the proof of Lemma ~\ref{l:SpMo_is_Mo} appears in \cite{Isa} (see the proof of Lemma 4.11).
\end{rem}

\begin{proof}
Since $M\subseteq Sp^{\cong}(M)$, it is clear that ${}^{\perp}Sp^{\cong}(M)\subseteq {}^{\perp}M$. It remains to show that ${}^{\perp}Sp^{\cong}(M)\supseteq {}^{\perp}M$.
Let $g $ be in ${}^{\perp}M$ and $f $ in $Sp^{\cong}(M)$. We need to show that $g \perp f$. Without loss of generality we may assume that $f$ comes from a natural transformation $X\to Y$ with the following properties:
    \begin{enumerate}
    \item The indexing category is a cofinite directed set: $T$.
    \item The natural map $X_t \to Y_t \times_{\lim \limits_{s<t} Y_s} \lim \limits_{s<t} X_s $ is in $M$ for every $t$ in $T$.
    \end{enumerate}
We need to construct a lift in the following diagram:
$$
\xymatrix{
A \ar[d]^g \ar[r] & \{X_t\} \ar[d]^f \\
B          \ar[r] & \{Y_t\}.
}$$
Giving a morphism $B\to \{X_t\}$ means giving morphisms $B\to X_t$ for every $t$ in $T$, compatible relative to morphisms in $T$, where $X_t$ is regarded as a simple object in $\Pro(\mcal{C})$. Thus, it is enough to construct compatible lifts $B\to X_t$, in the diagrams:
$$
\xymatrix{
A \ar[d]^g \ar[r] & X_t \ar[d]^{f_t} \\
B          \ar[r] & Y_t
}$$
for every $t$ in $T$.

We will do this by induction on $t$. If $t$ is an element of $T$ such that $d(t)= 0$ (that is, $t$ is a minimal element of $T$), then such a lift exists since $g $ is in ${}^{\perp}M$, and $$X_t \to  Y_t \times_{\lim \limits_{s<t}  Y_s} \lim \limits_{s<t}  X_s = Y_t$$ is in $M$. Suppose that we have constructed compatible lifts $B\to X_s$, for every $s<t$. Let us construct a compatible lift $B \to X_t$.

We will do this in two stages. First, the compatible lifts $B\to X_s$, for $s<t$, available by the induction hypothesis, gather together to form a lift:
$$
\xymatrix{
A \ar[d]^g \ar[r] & \lim \limits_{s<t} X_s \ar[d]^{f} \\
B  \ar[ru] \ar[r] & \lim \limits_{s<t} Y_s
}$$
and the diagram
$$
\xymatrix{
B\ar[d]  \ar[r]& Y_t \ar[d]\\
\lim \limits_{s<t} X_s \ar[r]   & \lim \limits_{s<t} Y_s
}$$
obviously commutes (since the morphisms $B\to Y_t$ are compatible). Thus we get a lift
$$
\xymatrix{
A\ar[d]^g  \ar[r]&        Y_t \times_{\lim \limits_{s<t} Y_s} \lim \limits_{s<t} X_s \ar[d]\\
B \ar[r] \ar[ru]    & Y_t.
}$$
The second stage is to choose any lift in the square:
$$
\xymatrix{
A\ar[d]^g\ar[r]  \ar[r] & X_t \ar[d]  \\
B \ar[r] & Y_t \times_{\lim \limits_{s<t} Y_s} \lim \limits_{s<t} X_s
}$$
which exists since $g$ is in ${}^\perp M$, and $X_t \to Y_t \times_{\lim \limits_{s<t} Y_s} \lim \limits_{s<t} X_s$ is in $M$.
In particular, we get that the following diagram commutes:
$$
\xymatrix{
B \ar[dr] \ar[r]& X_t \ar[d]\\
 & \lim \limits_{s<t} X_s,
}$$
which shows that the lift $B\to X_t$ is compatible.
\end{proof}

\subsection{Constructing inverse equivalences}
In this subsection we present a construction that produces an inverse equivalence to a fully faithful functor, given some extra data. We will use this construction a couple of times in this paper.

\begin{define}\label{d:inverse}
Let $F:\cC\to \cD$ be a fully faithful functor between categories. Suppose we are given two class functions: $g:Ob(\cD)\to Ob(\cC)$ and $\phi:Ob(\cD)\to Mor(\cD)$, such that for every object $d$ in $\cD$ we have that:
$$\phi(d):d\xrightarrow{\cong}F(g(d))$$
is an isomorphism.

We define a functor $G=G_{F,g,\phi}:\cD\to \cC$, as follows:

For every object $d$ in $\cD$ we define $G(d) := g(d)\in Ob(\cC).$

Let $f:d\to d'$ be a morphism in $\cD$. Since $F$ is fully faithful, the function:
$$F_{(G(d),G(d'))}:Hom_{\cC}(G(d),G(d'))\to Hom_{\cD}(F(G(d)),F(G(d')))$$
is bijective. Thus we have an inverse function:
$$F_{(G(d),G(d'))}^{-1}:Hom_{\cD}(F(G(d)),F(G(d')))\to Hom_{\cC}(G(d),G(d')).$$
We note that since $F_{(G(d),g(d'))}$ is bijective, the inverse function is well defined and can be constructed without using the axiom of choice.

We now define:
$$G(f):=F_{(G(d),G(d'))}^{-1}\left(\phi(d')\circ f\circ\phi(d)^{-1}\right):G(d)\to G(d').$$

It is not hard to verify that $G:\cD\to \cC$ is indeed a functor.
\end{define}

The following lemma is a straightforward verification:
\begin{lem}\label{l:inverse}
Let $F:\cC\to \cD$ be a fully faithful functor, and let $g:Ob(\cD)\to Ob(\cC)$ and $\phi:Ob(\cD)\to Mor(\cD)$ be two class functions as in Definition ~\ref{d:inverse}. Then the functor $G_{F,g,\phi}:\cD\to \cC$ constructed in Definition ~\ref{d:inverse} is an inverse equivalence to $F$ (that is, the compositions of $F$ and $G$ are naturally isomorphic to the identity functors).
\end{lem}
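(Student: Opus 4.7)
The plan is to exhibit two natural isomorphisms, one between $F\circ G$ and $\mathrm{id}_{\cD}$ and one between $G\circ F$ and $\mathrm{id}_{\cC}$, using only the data $(g,\phi)$ and the full-faithfulness of $F$.

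For the first, I would take $\eta_d := \phi(d)\colon d \xrightarrow{\cong} F(G(d))$, which is part of the input data and componentwise an isomorphism by hypothesis. Naturality along $f\colon d\to d'$ reads $F(G(f))\circ \phi(d) = \phi(d')\circ f$, which after rewriting as $F(G(f)) = \phi(d')\circ f\circ \phi(d)^{-1}$ is precisely the image under the bijection $F_{(G(d),G(d'))}$ of the defining formula for $G(f)$ in Definition~\ref{d:inverse}. Thus naturality holds by construction.

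For the second, for each $c$ in $\cC$ I would let $\epsilon_c\colon c\to G(F(c))$ be the unique morphism with $F(\epsilon_c)=\phi(F(c))$; this exists and is well-defined by full-faithfulness, and is an isomorphism because $F$ reflects isomorphisms and $\phi(F(c))$ is one. Naturality along $h\colon c\to c'$ reads $G(F(h))\circ \epsilon_c = \epsilon_{c'}\circ h$; since $F$ is faithful it suffices to verify this after applying $F$, where it becomes $F(G(F(h)))\circ \phi(F(c)) = \phi(F(c'))\circ F(h)$, which is the defining formula for $G(F(h))$ multiplied on the right by $\phi(F(c))$.

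I anticipate no substantive obstacle: the whole argument is a bookkeeping exercise transporting identities across the bijection on hom-sets supplied by the full-faithfulness of $F$, together with the invertibility of $\phi(d)$ at every object. The only mild asymmetry is that $\eta$ is literally part of the given data while $\epsilon$ has to be produced by inverting $F_{(c,G(F(c)))}$ on $\phi(F(c))$; once produced, both naturality checks are of the same routine character and reduce immediately to the defining equation of $G$ on morphisms.
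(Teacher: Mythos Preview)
Your proposal is correct and carries out precisely the straightforward verification the paper alludes to; the paper itself does not give a proof beyond the statement that it is a straightforward verification. Your construction of $\eta$ from $\phi$ and of $\epsilon$ by pulling $\phi(F(c))$ back along the bijection $F_{(c,G(F(c)))}$, together with the naturality checks reducing to the defining formula for $G(f)$, is exactly what is needed.
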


\begin{rem}
Given a fully faithful functor $F:\cC\to \cD$ and  two class functions $g:Ob(\cD)\to Ob(\cC)$ and $\phi:Ob(\cD)\to Mor(\cD)$ as in Definition ~\ref{d:inverse}, it is clear that $F$ is essentially surjective on objects.
Thus, by a classical theorem in category theory (see for example \cite{ML}), there exists a functor $G:\cD\to\cC$ that is an inverse equivalence to $F$. The purpose of Definition ~\ref{d:inverse} and Lemma ~\ref{l:inverse} is to give an explicit construction of such an inverse, and to emphasize the constructive nature of this construction.

In other words, given a fully faithful essentially surjective functor $F:\cC\to \cD$, an application of the axiom of choice for classes can produce $g:Ob(\cD)\to Ob(\cC)$ and $\phi:Ob(\cD)\to Mor(\cD)$ as in Definition ~\ref{d:inverse}. However, once we are given the class functions $g$ and $\phi$, we can always construct the inverse equivalence $G:\cD\to \cC$ \emph{without using the axiom of choice}. Thus, if we are able to construct the class functions $g$ and $\phi$ without using the axiom of choice (as is the case in our applications here), then we can also construct the inverse equivalence $G$ constructively.
\end{rem}


\section{A new model for a pro-category}\label{s:barr}

In this section we will define a category $\barr{\Pro}(\cC)$, and show that this category is equivalent to $\Pro(\cC)$. This category can be thought of as a new model for the pro-category of $\cC$.

This model seems to have some advantages over the traditional models for a pro-category. As an application of this new
model, we will use it to construct functorial factorizations in pro-categories in Section ~\ref{s_ff}.

Throughout this section we let $\cC$ be an arbitrary category.

\subsection{Definition of $\widetilde{\Pro}(\cC)$ and $\barr{\Pro}(\cC)$}
The purpose of this subsection is to define the 2-category $\widetilde{\Pro}(\cC)$. A 2-category in a category enriched in categories. More particularly, $\widetilde{\Pro}(\cC) $ is a category enriched in posets. Since a poset can considered as a 1-category we indeed get a structure of a 2-category
on $\widetilde{\Pro}(\cC)$.

\begin{define}
Let $A$ be a poset. We will say that $A$ has \emph{infinite height} if for every $a$ in $A$ there exists $a'$ in $A$ such that $a<a'$.
\end{define}

An object of the 2-category $\widetilde{\Pro}(\cC)$ is a diagram $F:A\to \cC$, such that $A$ is a cofinite directed set of infinite height. If we say that $F^A$ is an object in $\widetilde{\Pro}(\cC)$, we will mean that $F$ is an object of $\widetilde{\Pro}(\cC)$ and $A$ is its domain.
If $F^A$ and $G^B$ are objects in $\widetilde{\Pro}(\cC)$, a 1-morphism  $f$ from $F$ to $G$ is defined to be a pair $f=(\alpha_f,\phi_f)$, such that $\alpha_f:B\to A$ is a strictly increasing function, and $\phi_f:\alpha_f^*F=F\circ\alpha_f\to G$ is a natural transformation.

\begin{rem}\label{r_strictly}
The reason for demanding a \emph{strictly} increasing function in the definition of a 1-morphism will not be clear until much later. See for example the construction of the functor: $\barr{\Pro}(\cC^{\Delta^1})\to \barr{\Pro}(\cC^{\Delta^2})$ in Section ~\ref{s_ff}.
\end{rem}

Given two strictly increasing maps $\alpha,\alpha':B \to A$ we write $\alpha' \geq \alpha$ if  for every $b$ in $B$ we have $\alpha'(b)\geq\alpha(b)$.
Now we define a partial order on the set of 1-morphisms from $F$ to $G$. We set $(\alpha',\phi')\geq(\alpha,\phi)$ iff $\alpha' \geq \alpha$ and for  every $b$ in $B$ the following diagram commutes:
$$
\xymatrix{ & F(\alpha(b)) \ar[dr]^{\phi_b} & \\
F(\alpha'(b))\ar[ur]\ar[rr]^{\phi'_b} & & G(b)}
$$
(the arrow $F(\alpha'(b))\to F(\alpha(b))$ is of course the one induced by the unique morphism $\alpha'(b)\to \alpha(b)$ in $A$).

Composition of 1-morphisms in  $\widetilde{\Pro}(\cC)$ is defined by the formula:
$$(\beta,\psi)\circ(\alpha,\phi)=(\alpha\circ\beta,\psi\circ\phi_{\beta}).$$
It is not hard to check that we have turned the set of 1-morphisms from $F$ to $G$ into a poset and that
$\widetilde{\Pro}(\cC)$ is enriched in posets.

\begin{rem}
Using the language of 2-categories one can define $\widetilde{\Pro}(\cC)$ as a certain 2-comma category. To state the claim accurately it will be more convenient to consider the dual case of ind-categories. Everything we did in this paper is completely dualizable, so one can define the 2-category $\widetilde{Ind}(\cC)$ in an obvious way. However, when working with ind-categories it is more convenient to view a poset $T$ as a category which has a single morphism $u\to v$ iff $u\leq v$.

Let $\mathbb{P}$ denote the category enriched in posets with
$Ob (\mathbb{P})$ being all cofinite directed posets of infinite height and
$\Hom_{\mathbb{P}}(A,B)$ the poset of strictly increasing maps $A \to B$. The 2-category $\mathbb{P}$ is a sub 2-category of $Cat$ so there is a natural strict 2-functor $\mathbb{P}\hookrightarrow Cat$. There is also a strict 2-functor $\{\cC\}\hookrightarrow Cat$ choosing the category $\cC$.
Then $\widetilde{Ind}(\cC)$ can be described as the 2-comma category (see \cite{Gra} p. 29) of the above pair of 2-functors:
$$\widetilde{Ind}(\cC)\simeq\mathbb{P}\downarrow^2\cC.$$
\end{rem}

We define $\barr{\Pro}(\cC)$ to be the homotopy category of $\widetilde{\Pro}(\cC)$. That is, the one obtained by identifying every couple of 1-morphisms with a 2-morphism between them. Namely, a morphism between $F$ and $G$  in $\barr{\Pro}(\cC)$ is a connected component of the poset $Mor_{\widetilde{\Pro}(\cC)}(F,G)$. We will show (see Corollary ~\ref{c_directed}) that every such connected component is a directed poset.
Given a 1-morphism $f = (\alpha_f,\phi_f)$ in $\widetilde{\Pro}(\cC)$ we denote by $[f]= [\alpha_f,\phi_f]$ the corresponding morphism in $\barr{\Pro}(\cC)$.

In particular, if $F,G$ are objects in $\barr{\Pro}(C)$ having equal indexing categories, then any natural transformation: $\phi:F^A\to G^A$ gives rise to a morphism $[id, \phi]:F^A\to G^A$ in $\barr{\Pro}(C)$. If $F^A$ is any object in $\barr{\Pro}(C)$ and $\alpha:B\to A$ is a strictly increasing map between cofinite directed sets of infinite height, then $\alpha$ determines a morphism $[\alpha, id]:F^A\to \alpha^*F^A$ in $\barr{\Pro}(C)$.



\subsection{Equivalence of $\barr{\Pro}(\cC)$ and $\Pro(\cC)$}

In this subsection we construct a natural functor $i:\barr{\Pro}(\cC)\to \Pro(\cC)$. We then show that $i$ is a categorical equivalence.

Let $F:A\to \cC$ be an object in $\barr{\Pro}(\cC)$. Then clearly $i(F):=F$ is also an object ${\Pro}(\cC)$.

Let $F^A$ and $G^B$ be objects in $\barr{\Pro}(\cC)$, and let $(\alpha,\phi)$ be a 1-morphism from $F$ to $G$. Then $(\alpha,\phi)$ determines a morphism $F\to G$ in $\Pro(\cC)$ (for every $b$ in $B$ take the morphism $\phi_b:F_{\alpha(b)}\to G_b$). Suppose now that $(\alpha',\phi')$ is another 1-morphism from $F$ to $G$, such that $(\alpha',\phi')\geq(\alpha,\phi)$. Then it is clear from the definition of the partial order on 1-morphisms that for every $b$ in $ B$ the morphisms $\phi_b:F(\alpha(b))\to G(b)$ and $\phi'_b:F(\alpha'(b))\to G(b)$ represent the same object in $\colim_{i\in A}\Hom_{\cC}(F(i),G(b))$. Thus $(\alpha',\phi')$ and $(\alpha,\phi)$ determine the same morphism $F\to G$ in $\Pro(\cC)$. It follows, that a morphism $F\to G$ in $\barr{\Pro}(\cC)$ determines a well defined morphism $i(F)\to i(G)$ in $\Pro(\cC)$ through the above construction. This construction clearly commutes with compositions and identities, so we have defined a functor: $i:\barr{\Pro}(\cC)\to \Pro(\cC)$.

\begin{prop}\label{p_full}
The functor $i:\barr{\Pro}(\cC)\to \Pro(\cC)$ is full.
\end{prop}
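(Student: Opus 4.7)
The plan is to show that any morphism $f \in \Hom_{\Pro(\cC)}(F,G)$, with $F:A\to\cC$ and $G:B\to\cC$ objects of $\barr{\Pro}(\cC)$, is the image under $i$ of some $1$-morphism $(\alpha,\phi)$ in $\widetilde{\Pro}(\cC)$. Unwinding Definition~\ref{def_pro}, such an $f$ is the same data as a compatible family of equivalence classes $f_b \in \colim_{a\in A}\Hom_\cC(F_a,G_b)$, one for each $b\in B$; compatibility means that for $b' < b$ in $B$, pushing $f_b$ along $G_b \to G_{b'}$ yields $f_{b'}$ in $\colim_{a\in A}\Hom_\cC(F_a,G_{b'})$.

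I will construct the strictly increasing map $\alpha:B\to A$ and the components $\phi_b:F_{\alpha(b)}\to G_b$ simultaneously, by induction on the degree function $d_B$ on $B$, so that at the end of stage $n$ the following hold: (i) $\alpha|_{B^n}$ is strictly monotone; (ii) each $\phi_b$ represents the class $f_b$; and (iii) for every $b' < b$ in $B^n$ the composite $F_{\alpha(b)}\to F_{\alpha(b')}\xrightarrow{\phi_{b'}} G_{b'}$ coincides with $F_{\alpha(b)}\xrightarrow{\phi_b} G_b \to G_{b'}$, i.e.\ $\phi$ is natural. For an element $b$ of degree $n$, first pick any representative $\phi_b':F_{a_b}\to G_b$ of $f_b$. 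Since $B$ is cofinite, the set $\{b'\in B : b'<b\}$ is finite. For each such $b'$ the compatibility of $f_b$ and $f_{b'}$ in $\colim_{a\in A}\Hom_\cC(F_a,G_{b'})$ provides some $c_{b,b'}\in A$ with $c_{b,b'}\geq a_b$ and $c_{b,b'}\geq\alpha(b')$ at which the two representatives of $f_{b'}$ become literally equal in $\cC$. Using directedness of $A$, choose $c_b\in A$ above the finite family $\{c_{b,b'}\}_{b'<b}$, and then, crucially using that $A$ has infinite height, choose $\alpha(b)\in A$ with $\alpha(b) > c_b$. Finally, let $\phi_b$ be the composite $F_{\alpha(b)}\to F_{a_b}\xrightarrow{\phi_b'} G_b$.

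Strict monotonicity of $\alpha$ then follows from $\alpha(b) > c_b \geq \alpha(b')$ for each $b'<b$, together with the fact that two distinct elements of $B^n\setminus B^{n-1}$ are automatically incomparable (since the degree strictly increases along $<$), so they impose no constraint. Condition (iii) is verified by observing that both composites in the relevant square factor through $F_{c_b}\to F_{c_{b,b'}}$, where they already agree by the choice of $c_{b,b'}$. Condition (ii) is preserved because $\phi_b$ differs from the chosen representative $\phi_b'$ only by precomposition with a transition map in $A$. Once the induction is complete, $(\alpha,\phi)$ is a $1$-morphism in $\widetilde{\Pro}(\cC)$, and by construction $\phi_b$ represents the $b$-component $f_b$ of $f$ for every $b\in B$, so $i([\alpha,\phi])=f$.

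The main subtlety, and the reason the hypotheses of $\barr{\Pro}(\cC)$ are set up the way they are, is the interplay of the three conditions on $A$: cofiniteness of $B$ makes the inductive step manipulate only finite families of predecessors, directedness of $A$ produces a common upper bound $c_b$, and infinite height of $A$ is used exactly to upgrade this upper bound to a \emph{strict} upper bound $\alpha(b)>c_b$, which is what the definition of a $1$-morphism in $\widetilde{\Pro}(\cC)$ demands. Without infinite height one would only obtain a weakly monotone $\alpha$, and fullness would fail at the level of $\widetilde{\Pro}(\cC)$.
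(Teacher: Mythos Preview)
Your proof is correct and is essentially the paper's own argument: both construct $(\alpha,\phi)$ by induction on $d_B$, using cofiniteness of $B$, directedness of $A$, and infinite height of $A$ exactly as you describe; the paper merely organizes the inner step as a sequential induction over an enumeration $b_1,\dots,b_k$ of the predecessors of $b$, whereas you treat them in parallel and then take one common upper bound $c_b$. One trivial fix: when $b$ has degree~$0$ the family $\{c_{b,b'}\}_{b'<b}$ is empty and nothing in your construction forces $c_b\geq a_b$, which you need for the composite $\phi_b := \bigl(F_{\alpha(b)}\to F_{a_b}\xrightarrow{\phi_b'} G_b\bigr)$ to make sense---simply include $a_b$ among the finitely many elements that $c_b$ must dominate.
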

\begin{proof}
Let $F^A$ and $G^B$ be objects in $\barr{\Pro}(\cC)$. Let $f:F\to G$ be a morphism in $\Pro(\cC)$. We need to construct a 1-morphism $(\alpha,\phi)$ from $F$ to $G$ that induces our given $f$.

We will define $\alpha:B\to A$, and $\phi:F\circ\alpha\to G$ recursively.

Let $n\geq 0$. Suppose we have defined a strictly increasing function $\alpha:B^{n-1}\to A$, and a natural transformation $\phi:F\circ\alpha\to G|_{B^{n-1}}$, such that for every $b$ in $B^{n-1}$ the morphism $\phi_b:F(\alpha(b))\to G(b)$ represents $f$ (see Definition ~\ref{def_deg} and the remarks after Definition ~\ref{def_pro}).

Let $b$ be an element in $B^n \setminus B^{n-1}$.
Write $B_b^{n-1}=\{b_1,...,b_k\}$. We will prove the following by induction on $i$:

For every $i=0,...,k$ there exists $a_i$ in $A$ and a morphism $F(a_i)\to G(b)$ representing $f$, such that for every $1\leq j\leq i$ we have $a_i\geq \alpha(b_j)$ and the following diagram commutes:
$$\xymatrix{F(a_i)\ar[r]\ar[d]_{F(a_{i}\to \alpha(b_{j}))} & G(b)\ar[d]^{G(b\to b_{j})} \\
F(\alpha(b_j))\ar[r]^{\phi_{b_j}} & G(b_j).}$$

$i=0$. Choose $a_0$ in $A$ and a morphism $F(a_0)\to G(b)$ representing $f$.

Suppose we have proved the above for some $i\in\{0,...,k-1\}$.

We will prove the above for $i+1$. The morphisms $F(a_i)\to G(b)$ and $\phi_{b_{i+1}}:F(\alpha(b_{i+1}))\to G(b_{i+1})$ both represent $f$. We have that $b\geq b_{i+1}$, so the compatibility of the representing morphisms implies that $\phi_{b_{i+1}}$ and the composition
$$F(a_i)\to G(b)\xrightarrow{G(b\to b_{i+1})}G(b_{i+1})$$
represents the same element in $\colim_{a\in A}\Hom_{\cC}(F(a),G(b_{i+1}))$. Thus, there exists $a_{i+1}$ in $A$ such that $a_{i+1}\geq a_i, \alpha(b_{i+1})$ and the following diagram commutes:
$$\xymatrix{                                                                & F(a_i)\ar[dr]            &                               & \\
F(a_{i+1})\ar[ur]^{F(a_{i+1}\to a_i)}\ar[dr]_{F(a_{i+1}\to \alpha(b_{i+1}))}&                          & G(b)\ar[dr]^{G(b\to b_{i+1})} & \\
                                                                            & F(\alpha(b_{i+1}))\ar[rr]^{\phi_{b_{i+1}}} &             & G(b_{i+1}).}$$
It is not hard to verify that taking $F(a_{i+1})\to G(b)$ to be the morphism described in the diagram above finishes the inductive step.

Since $A$ has infinite height we can find $\alpha(b)$ in $A$ such that $\alpha(b)>a_k$. Defining $\phi_b$ to be the composition:
$$F(\alpha(b))\xrightarrow{F(\alpha(b)\to a_{k})}F(a_{k})\to G(b)$$
finishes the inductive step.

We now define $\alpha(b):=a_k$. Thus $\alpha(b)$ is an object in $A$ and there exists a morphism $\phi_b:F(\alpha(b))\to G(b)$ representing $f$, such that for every $b'$ in $B_b^{n-1}$ (see Definition ~\ref{def CDS}) we have $\alpha(b)> \alpha(b')$ and the following diagram commutes:
$$\xymatrix{F(\alpha(b))\ar[r]^{\phi_{b}}\ar[d]_{F(\alpha(b)\to \alpha(b'))} & G(b)\ar[d]^{G(b\to b_{i+1})} \\
F(\alpha(b'))\ar[r]^{\phi_{b'}} & G(b').}$$
This completes the recursive definition, and thus the proof of the proposition.
\end{proof}

We now wish to prove that $i$ is faithful. We will prove a stronger result:
\begin{prop}\label{p_directed}
Let $F^A$ and $G^B$ be objects in $\barr{\Pro}(\cC)$, and let $(\alpha,\phi)$ and $(\alpha',\phi')$ be 1-morphisms from $F$ to $G$. Assume that $(\alpha,\phi)$ and $(\alpha',\phi')$ induce the same morphism $f:F\to G$ in $\Pro(\cC)$. Then there exists a  1-morphism $(\alpha'',\phi'')$ from $F$ to $G$ such that $(\alpha'',\phi'')\geq(\alpha,\phi),(\alpha',\phi')$.
\end{prop}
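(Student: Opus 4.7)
The plan is to construct $(\alpha'', \phi'')$ dominating both given $1$-morphisms by induction on the degree function $d:B\to\NN$, closely mimicking the construction of Proposition~\ref{p_full} but carrying two comparisons in parallel. Assume by induction that $\alpha'':B^{n-1}\to A$ and $\phi'':\alpha''^*F|_{B^{n-1}}\to G|_{B^{n-1}}$ have been defined, strictly increasing, satisfying $\alpha''(b')\geq\alpha(b'),\alpha'(b')$ together with the two triangle conditions witnessing $(\alpha'',\phi'')|_{B^{n-1}}\geq(\alpha,\phi)|_{B^{n-1}}$ and $(\alpha'',\phi'')|_{B^{n-1}}\geq(\alpha',\phi')|_{B^{n-1}}$; equivalently each $\phi''_{b'}$ represents the common underlying morphism $f$ in $\Pro(\cC)$.

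Now fix $b \in B^n \setminus B^{n-1}$ and write $B_b^{n-1}=\{b_1,\dots,b_k\}$. Since $\phi_b$ and $\phi'_b$ both represent $f$ at the index $b$, they determine the same element of $\colim_{a\in A}\Hom_\cC(F(a),G(b))$, so there is some $a_0\in A$ with $a_0\geq\alpha(b),\alpha'(b)$ at which the two compositions $F(a_0)\to F(\alpha(b))\xrightarrow{\phi_b}G(b)$ and $F(a_0)\to F(\alpha'(b))\xrightarrow{\phi'_b}G(b)$ coincide; denote this common map $g_0:F(a_0)\to G(b)$. I then inductively produce a chain $a_0\leq a_1\leq\cdots\leq a_k$ in $A$ with $a_j\geq\alpha''(b_j)$, together with the pullbacks $g_j:F(a_j)\to G(b)$ of $g_{j-1}$, such that the naturality square
\[
\xymatrix{ F(a_j) \ar[r]^{g_j} \ar[d] & G(b) \ar[d] \\ F(\alpha''(b_j)) \ar[r]^{\phi''_{b_j}} & G(b_j) }
\]
commutes; this is the same filtered-colimit equalisation step as in Proposition~\ref{p_full}, valid because $g_{j-1}$ post-composed with $G(b)\to G(b_j)$ and $\phi''_{b_j}$ both represent $f$ at $b_j$, hence agree after further passage in $A$. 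Finally I invoke infinite height of $A$ to choose $\alpha''(b)>a_k$, which automatically gives $\alpha''(b)>\alpha''(b_j)$ for every $j$ and so preserves strict monotonicity; set $\phi''_b$ to be $g_k$ pulled back along $F(\alpha''(b))\to F(a_k)$.

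Verifying the three required commutativities at $b$ is then immediate: the triangle witnessing $\phi''_b=\phi_b\circ F(\alpha''(b)\to\alpha(b))$ (respectively with $\alpha',\phi'$) comes from the defining property of $g_0$, while the naturality of $\phi''$ along $b\to b_j$ comes from the defining property of $g_j$. The main obstacle is keeping all three families of diagrams coherent simultaneously at each inductive stage; the trick is to resolve the primary pair $(\phi_b,\phi'_b)$ first at $a_0$ and then handle the $k$ naturality squares one at a time by pushing further inside the directed set $A$, exactly as in Proposition~\ref{p_full}. The infinite-height hypothesis on $A$ is used precisely to upgrade the ``$\geq$'' of the successive $a_j$'s to the strict ``$>$'' needed for $\alpha''$ to remain strictly increasing, and Corollary~\ref{c_directed} (that connected components of $\Mor_{\widetilde{\Pro}(\cC)}(F,G)$ are directed) drops out as a formal consequence.
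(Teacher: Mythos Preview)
Your proposal is correct and follows essentially the same route as the paper's proof: a recursion on the degree in $B$, with the inner step at each new $b$ first equalising $\phi_b$ and $\phi'_b$ at some $a_0$ via the filtered colimit, then absorbing the naturality constraints for $b_1,\dots,b_k$ one at a time by pushing further in $A$, and finally using infinite height to make $\alpha''$ strictly increasing. The only cosmetic difference is that the paper records explicitly at each stage of the inner induction that the two triangle conditions against $(\alpha,\phi)$ and $(\alpha',\phi')$ persist, whereas you (correctly) observe this is automatic once $g_0$ is fixed and all later $g_j$ are pullbacks of it.
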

\begin{proof}
We will define $\alpha'':B\to A$ and $\phi'':F\circ\alpha\to G$ recursively.

Let $n\geq 0$. Suppose we have defined a strictly increasing function $\alpha'':B^{n-1}\to A$ and a natural transformation $\phi'':F\circ\alpha''\to G|_{B^{n-1}}$, such that for every $b$ in $ B^{n-1}$ we have $\alpha''(b)\geq \alpha(b),\alpha'(b)$ and the following diagram commutes:
$$\xymatrix{ &   F(\alpha(b))\ar[dr]^{\phi_b}     & \\
F(\alpha''(b))\ar[ur]^{F(\alpha''(b)\to \alpha(b))}\ar[dr]_{F(\alpha''(b)\to \alpha'(b))} \ar[rr]^{\phi''_b} & & G(b) \\
             &   F(\alpha'(b))\ar[ur]_{\phi'_b}   & }$$
(see Definition ~\ref{def_deg}).

Let $b$ be an element in $B^n \setminus B^{n-1}$.
Write $B_b^{n-1}=\{b_1,...,b_k\}$. We will prove the following by induction on $i$:

For every $i=0,...,k$ there exists $a_i$ in $A$ and a morphism $F(a_i)\to G(b)$, such that for every $1\leq j\leq i$ we have $a_i\geq \alpha''(b_j)$ and the following diagram commutes:
$$\xymatrix{F(a_i)\ar[r]\ar[d]_{F(a_{i}\to \alpha''(b_{j}))} & G(b)\ar[d]^{G(b\to b_{j})} \\
F(\alpha''(b_j))\ar[r]^{\phi''_{b_j}} & G(b_j),}$$
and we have $a_i \geq \alpha(b), \alpha'(b)$ and the following diagram commutes:
$$\xymatrix{ &   F(\alpha(b))\ar[dr]^{\phi_b}     & \\
F(a_i)\ar[ur]^{F(a_i\to \alpha(b))}\ar[dr]_{F(a_i\to \alpha'(b))}\ar[rr] & & G(b). \\
             &   F(\alpha'(b))\ar[ur]_{\phi'_b}   & }$$

$i=0$. The morphisms $\phi_b:F(\alpha(b))\to G(b)$ and $\phi'_b:F(\alpha'(b))\to G(b)$ represent the same element in $\colim_{a\in A}\Hom_{\cC}(F(a),G(b))$. It follows that there exists $a_0$ in $A$ such that $a_0\geq \alpha(b), \alpha'(b)$ and the following diagram commutes:
$$\xymatrix{ &   F(\alpha(b))\ar[dr]^{\phi_b}     & \\
F(a_0)\ar[ur]^{F(a_0\to \alpha(b))}\ar[dr]_{F(a_0\to \alpha'(b))} & & G(b). \\
             &   F(\alpha'(b))\ar[ur]_{\phi'_b}   & }$$
We thus \emph{define} the morphism $F(a_0)\to G(b)$ to be the one described in the diagram above.

Suppose we have proved the above for some $i\in\{0,...,k-1\}$.

We will prove the above for $i+1$. The morphisms $F(a_i)\to G(b)$ and $\phi''_{b_{i+1}}:F(\alpha''(b_{i+1}))\to G(b_{i+1})$ both represent $f$. We have that $b\geq b_{i+1}$, so the compatibility of the representing morphisms implies that $\phi''_{b_{i+1}}$ and the composition
$$F(a_i)\to G(b)\xrightarrow{G(b\to b_{i+1})}G(b_{i+1})$$
represent the same object in $\colim_{a\in A}\Hom_{\cC}(F(a),G(b_{i+1}))$. Thus, there exists $a_{i+1}$ in $A$ such that $a_{i+1}\geq a_i, \alpha''(b_{i+1})$ and the following diagram commutes:
$$\xymatrix{                                                                & F(a_i)\ar[dr]            &                               & \\
F(a_{i+1})\ar[ur]^{F(a_{i+1}\to a_i)}\ar[dr]_{F(a_{i+1}\to \alpha''(b_{i+1}))}&                          & G(b)\ar[dr]^{G(b\to b_{i+1})} & \\
                                                                            & F(\alpha''(b_{i+1}))\ar[rr]^{\phi''_{b_{i+1}}} &             & G(b_{i+1})}.$$
It is not hard to verify that taking $F(a_{i+1})\to G(b)$ to be the morphism described in the diagram above finishes the inductive step.

Since $A$ has infinite height we can find $\alpha''(b)$ in $A$ such that $\alpha''(b)>a_k$. Defining $\phi''_b$ to be the composition:
$$F(\alpha''(b))\xrightarrow{F(\alpha''(b)\to a_{k})}F(a_{k})\to G(b)$$
finishes the inductive step.

We now define $\alpha''(b):=a_k$. Thus $\alpha''(b)$ is an object in $A$ and
there exists a morphism $\phi''_b:F(\alpha''(b))\to G(b)$, such that for every $b'$ in $ B^{n-1}_b$ (see Definition ~\ref{def CDS}) we have $\alpha''(b)> \alpha''(b')$ and the following diagram commutes:
$$\xymatrix{F(\alpha''(b))\ar[r]^{\phi''_b}\ar[d]_{F(\alpha''(b)\to \alpha''(b'))} & G(b)\ar[d]^{G(b\to b')} \\
F(\alpha''(b'))\ar[r]^{\phi''_{b'}} & G(b'),}$$
and we have $\alpha''(b)\geq \alpha(b), \alpha'(b)$ and the following diagram commutes:
$$\xymatrix{ &   F(\alpha(b))\ar[dr]^{\phi_b}     & \\
F(\alpha''(b))\ar[ur]^{F(\alpha''(b)\to \alpha(b))}\ar[dr]_{F(\alpha''(b)\to \alpha'(b))}\ar[rr]^{\phi''_b} & & G(b). \\
             &   F(\alpha'(b))\ar[ur]_{\phi'_b}   & }$$
This completes the recursive definition, and thus the proof of the proposition.
\end{proof}

\begin{cor}\label{c_faithful}
The functor $i:\barr{\Pro}(\cC)\to \Pro(\cC)$ is faithful.
\end{cor}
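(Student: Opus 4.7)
The plan is to deduce faithfulness directly from Proposition \ref{p_directed}, which has already done all the combinatorial work. Suppose $[\alpha,\phi]$ and $[\alpha',\phi']$ are two morphisms in $\barr{\Pro}(\cC)$ from $F^A$ to $G^B$ such that $i([\alpha,\phi])=i([\alpha',\phi'])$. By the construction of $i$, the 1-morphisms $(\alpha,\phi)$ and $(\alpha',\phi')$ induce the same morphism $f:F\to G$ in $\Pro(\cC)$, so the hypothesis of Proposition \ref{p_directed} is satisfied.

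Proposition \ref{p_directed} then produces a 1-morphism $(\alpha'',\phi'')$ from $F$ to $G$ with $(\alpha'',\phi'')\geq(\alpha,\phi)$ and $(\alpha'',\phi'')\geq(\alpha',\phi')$ in the poset $\Mor_{\widetilde{\Pro}(\cC)}(F,G)$. This exhibits a zig-zag of length two connecting $(\alpha,\phi)$ to $(\alpha',\phi')$ within that poset, so the two 1-morphisms belong to the same connected component. By the definition of $\barr{\Pro}(\cC)$ as the homotopy category of $\widetilde{\Pro}(\cC)$, this means $[\alpha,\phi]=[\alpha',\phi']$ as morphisms in $\barr{\Pro}(\cC)$, which is exactly faithfulness.

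There is no real obstacle here, since the hard work (the recursive construction of a common upper bound, using that $A$ has infinite height and that representatives of the same element of the relevant colimit can be dominated) has already been carried out in Proposition \ref{p_directed}. The only thing to verify is the trivial observation that two elements of a poset admitting a common upper bound lie in the same connected component, which justifies passing from the conclusion of Proposition \ref{p_directed} to equality in $\barr{\Pro}(\cC)$.
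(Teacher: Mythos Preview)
Your proof is correct and is precisely the approach the paper intends: the paper explicitly introduces Proposition~\ref{p_directed} as ``a stronger result'' from which faithfulness follows, and then states Corollary~\ref{c_faithful} without further argument. Your write-up just makes the immediate deduction explicit.
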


\begin{cor}\label{c_directed}
Let $F$ and $G$ be objects in $\barr{\Pro}(\cC)$. Then every connected component of the poset of 1-morphisms from $F$ to $G$ (that is, every morphism from $F$ to $G$ in $\barr{\Pro}(\cC)$) is directed.
\end{cor}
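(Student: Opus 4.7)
The plan is to extract the statement as a direct combination of Corollary \ref{c_faithful} (equivalently, the construction of $i$) together with Proposition \ref{p_directed}. The first observation is that the relation $\geq$ on $\Mor_{\widetilde{\Pro}(\cC)}(F,G)$ is compatible with the functor $i$: if $(\alpha,\phi)\leq (\alpha',\phi')$, then for every $b\in B$ the morphisms $\phi_b$ and $\phi'_b$ factor through each other via $F(\alpha'(b))\to F(\alpha(b))$, hence represent the same element of $\colim_{a\in A}\Hom_{\cC}(F(a),G(b))$. Thus comparable 1-morphisms induce the same morphism in $\Pro(\cC)$, and by induction along any zig-zag of comparabilities, all 1-morphisms in a single connected component induce the same morphism $f:F\to G$ in $\Pro(\cC)$.

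Fix a connected component $C$ of $\Mor_{\widetilde{\Pro}(\cC)}(F,G)$. It is non-empty by definition. Given two 1-morphisms $(\alpha_1,\phi_1),(\alpha_2,\phi_2)\in C$, by the previous paragraph they induce the same morphism $f:F\to G$ in $\Pro(\cC)$. Now Proposition \ref{p_directed} applies verbatim and produces a 1-morphism $(\alpha'',\phi'')$ with $(\alpha'',\phi'')\geq (\alpha_1,\phi_1)$ and $(\alpha'',\phi'')\geq (\alpha_2,\phi_2)$. In particular $(\alpha'',\phi'')$ lies in the same connected component as both, and serves as the required common upper bound inside $C$. This verifies the directedness condition for the poset $C$.

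The only subtle point is making sure the $(\alpha'',\phi'')$ produced by Proposition \ref{p_directed} actually belongs to $C$, but this is automatic: it dominates an element of $C$, hence is comparable to it in the poset, hence lies in the same connected component. There is no main obstacle here; the corollary is essentially a bookkeeping consequence of the two preceding propositions, with the real work having been done in the transfinite construction inside Proposition \ref{p_directed}.
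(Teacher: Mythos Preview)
Your proof is correct and follows exactly the line the paper intends: the paper places this as an immediate corollary of Proposition~\ref{p_directed} (together with the well-definedness of $i$ on morphisms, which is your zig-zag observation), and your write-up just makes that implicit argument explicit.
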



\begin{define}\label{d_onto}
Let $I$ be a small directed category. We will describe a construction that produces a small cofinite poset $A_I$ and a functor: $p_I:A_I\to I$.

We shall define $A_I$ and $p_I:A_I\to I$ recursively.

We start with defining $A_I^{-1}:=\emptyset$, and $p_I^{-1}:A_I^{-1}=\emptyset \to I$ in the only possible way.

Now, suppose we have defined an $n$-level cofinite poset $A_I^n$, and a functor $p_I^n:A_I^n \to I$.

We define $B_I^{n+1}$ to be the set of all tuples  $(R,p:R^{\lhd}\to I)$ such that
$R$ is a finite section in $A_I^n$ (see Definition ~\ref{d:section}), $p:R^{\lhd}\to I$ is a functor such that $p|_R = p_I^n|_R$.

As a set, we define: $A_I^{n+1} := A_I^n \coprod B_I^{n+1}$. For $c$ in $ A_I^{n}$, we set $c < (R,p:R^{\lhd}\to I)$ iff $c $ in $R$. Thus we have defined an $(n+1)$-level cofinite poset: $A_I^{n+1}$.
We now define $p_I^{n+1}:A_{I}^{n+1} \to I$ by $p^{n+1}|_{A_{I}^n} = p_I^{n} $ and $p_I^{n+1}(R,p:R^{\lhd}\to T) = p(\infty)$, where $\infty $ is the initial object in $R^{\lhd}$.

We now have an infinite chain of cofinite posets:
$$A_I^{-1} \subset A_I^{0} \subset \cdots \subset A_I^{n} \subset \cdots .$$
We define $A_I := \bigcup A_I^n$.

It is clear that by taking the limit on all the $p_I^n$ we obtain a functor $p_I:A_I \to I$.
\end{define}

Note that $A_I^0=Ob(I)$, and $p_I^0:A_I^{0}=Ob(I) \to I$ is just the identity on $Ob(I)$.

\begin{lem}\label{l:A_is_directed}
Let $I$ be a small directed category. Then $A_I$ is cofinite directed and of infinite height.
\end{lem}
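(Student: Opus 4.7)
The plan is to verify the three conditions one at a time, all using the same observation: the recursive step of Definition ~\ref{d_onto} adds, at level $n+1$, precisely those pairs $(R, p\colon R^{\lhd}\to I)$ where $R$ is a finite section of $A_I^n$ and $p$ extends $p_I^n|_R$; and the new element $(R,p)$ lies strictly above each element of $R$ and is incomparable with everything else in $A_I^n$.

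For cofiniteness, the inductive description of the order gives this immediately. If $x = (R, p)$ was added at level $n+1$, then by definition $c < x$ iff $c \in R$, so $\{z \in A_I : z \leq x\} = R \cup \{x\}$, which is finite; and elements of $A_I^0 = Ob(I)$ are minimal, with singleton down-sets. Hence $A_I$ is cofinite.

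For directedness I would invoke Lemma ~\ref{l:eqiv_directed}: it suffices to show that every finite section $R$ of $A_I$ admits an upper bound. Given such an $R$, choose $n$ large enough that $R \subset A_I^n$; one checks that $R$ is also a section of $A_I^n$, because any $y \in A_I^n$ with $y < x$ for some $x \in R$ automatically lies in $R$ since $R$ is a section of the full poset $A_I$. Now apply the second half of Lemma ~\ref{l:eqiv_directed} to the directed category $I$ and the functor $p_I^n|_R \colon R \to I$: directedness of $I$ produces an extension $p \colon R^{\lhd} \to I$ whose restriction to $R$ is $p_I^n|_R$. Then the pair $(R, p)$ is a legitimate element of $A_I^{n+1}$ by construction, and it dominates every $r \in R$. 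For infinite height I would run the same argument with $R := \{z \in A_I : z \leq a\}$, which is finite by cofiniteness and is a section by construction; extending $p_I^n|_R$ via directedness of $I$ gives an element $a' = (R,p) \in A_I^{n+1}$ with $a' > a$ strictly (since $a \in R$ forces $a < a'$).

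The argument is engineered to be painless: no deep idea is needed once the description of the recursive step is in hand, and the only small piece of bookkeeping is the remark that a finite section of $A_I$ restricts to a finite section of any level $A_I^n$ containing it. If anything merits mild care it is being explicit that the extension supplied by Lemma ~\ref{l:eqiv_directed} respects $p_I^n|_R$, so that the resulting pair really is an element of $A_I^{n+1}$ rather than just an abstract cocone over $p_I^n|_R$ in $I$.
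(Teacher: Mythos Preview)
Your proposal is correct and follows essentially the same approach as the paper: directedness is established via Lemma~\ref{l:eqiv_directed} by producing, for any finite section $R\subset A_I^n$, the element $(R,p)\in B_I^{n+1}$ using directedness of $I$ to extend $p_I^n|_R$. The only difference is that the paper simply declares cofiniteness and infinite height to be ``clear by construction,'' whereas you spell these out explicitly; your added detail (that the down-set of $(R,p)$ is exactly $R\cup\{(R,p)\}$, and that infinite height follows by applying the directedness argument to $R=(A_I)_a$) is correct and harmless.
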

\begin{proof}
It is clear by construction that $A_I$ is  cofinite and of infinite height.
To prove that $A_I$ is directed we need to  show that for every finite section $R \subset A_I$,
there exists an element $c $ in $A_I$, such that $c\geq r$ for every $r$ in $ R$ (see Lemma ~\ref{l:eqiv_directed}).
Indeed let $R \subset A_I$ be a finite section. Since $R$ is finite, there exists some $n $ in $\NN$ such that
$R \subset A_I^{n}$. We can take $c$ to be any element in $B_I^{n+1}$ of the form $(R,p:R^{\lhd}\to T)$. To show that such an element exists, note that since $I$ is directed we can extend the functor $p_I^n|_R:R\to I$ to a functor $p:R^{\lhd}\to I$ (see Lemma ~\ref{l:eqiv_directed}).
\end{proof}

\begin{lem}\label{l:A_onto}
Let $I$ be a small directed category. Then the functor: $p_I:A_I\to I$ is cofinal.
\end{lem}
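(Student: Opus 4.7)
The plan is to verify the two conditions for cofinality (Definition~\ref{d:cofinal}): for each $i\in I$, the over-category $(p_I)_{/i}$ must be nonempty and connected. Nonemptiness is immediate, since $A_I^0=Ob(I)$ and $p_I$ restricts to the identity on $A_I^0$, so $(i,\mathrm{id}_i)\in(p_I)_{/i}$. For connectedness I would show directly that any two objects $(a,f\colon p_I(a)\to i)$ and $(a',f'\colon p_I(a')\to i)$ admit a common refinement: an element $c\in A_I$ with $c\geq a$, $c\geq a'$, together with a morphism $h\colon p_I(c)\to i$ satisfying $f\circ p_I(c\to a)=h=f'\circ p_I(c\to a')$.

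To produce $c$, I would exploit the inductive construction of $A_I$ directly. Set $R:=(A_I)_a\cup(A_I)_{a'}$, a finite section of $A_I$, and pick $n$ with $R\subset A_I^n$. Any element of $B_I^{n+1}$ of the form $c=(R,p\colon R^{\lhd}\to I)$ with $p|_R=p_I|_R$ automatically satisfies $c>r$ for every $r\in R$, in particular $c\geq a$ and $c\geq a'$. So the task reduces to constructing a single functor $p\colon R^{\lhd}\to I$ extending $p_I|_R$ whose initial-object data encode the desired refinement. Writing $j:=p(\infty)$, $\alpha:=p(\infty\to a)$ and $\alpha':=p(\infty\to a')$, I need $f\circ\alpha=f'\circ\alpha'$ (this composite will serve as $h$) and, for every $r\in(A_I)_a\cap(A_I)_{a'}$, the equality $p_I(a\to r)\circ\alpha=p_I(a'\to r)\circ\alpha'$. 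These compatibilities ensure that setting $p(\infty\to r):=p_I(a\to r)\circ\alpha$ when $r\leq a$, and $p_I(a'\to r)\circ\alpha'$ when $r\leq a'$, yields a well-defined functor.

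The heart of the argument is producing $(j,\alpha,\alpha')$ meeting these compatibilities, which I would do by invoking the three clauses of Definition~\ref{d_directed} in turn. First, clause~(2) supplies some $j_0\in I$ together with morphisms $j_0\to p_I(a)$ and $j_0\to p_I(a')$. Second, since $(A_I)_a\cap(A_I)_{a'}$ is finite, clause~(3) can be applied iteratively: for each $r$ in the intersection, precompose with a further morphism to equalize the two paths $j_0\to p_I(a)\to p_I(r)$ and $j_0\to p_I(a')\to p_I(r)$. Since each step only precomposes with previously chosen maps, equalities achieved earlier are preserved, and the process terminates at some $j_1$. Third, apply clause~(3) once more to find $j_2\to j_1$ equalizing the two compositions $j_1\to p_I(a)\xrightarrow{f}i$ and $j_1\to p_I(a')\xrightarrow{f'}i$. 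Setting $j:=j_2$, letting $\alpha,\alpha'$ be the induced morphisms and $h:=f\circ\alpha$ satisfies all the required relations; assembling these into the functor $p$ and taking $c:=(R,p)\in B_I^{n+1}$ produces morphisms $(c,h)\to(a,f)$ and $(c,h)\to(a',f')$ in $(p_I)_{/i}$.

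The main obstacle I foresee is the bookkeeping in the middle stage: ensuring that simultaneously equalizing finitely many pairs of morphisms in $I$ does not destroy earlier equalities. Since clause~(3) acts only by precomposition, each step respects the previous ones, but the finiteness of $(A_I)_a\cap(A_I)_{a'}$, which follows from the cofiniteness of $A_I$, is essential to guarantee termination.
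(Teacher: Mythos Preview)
Your argument is correct, but it takes a different route from the paper's. The paper first invokes the already-proven directedness of $A_I$ (Lemma~\ref{l:A_is_directed}) to pick some $c\geq a,a'$, obtains two parallel morphisms $p_I(c)\rightrightarrows i$, equalizes them via a \emph{single} application of clause~(3) by some $h\colon i'\to p_I(c)$, and then proves a lifting property: writing $c=(R,p)\in B_I^n$, one can set $c':=(R_c,p')\in B_I^{n+1}$ with $p'(\infty')=i'$ and $p'(\infty'\to\infty)=h$, so that $p_I(c'\to c)=h$. This decomposes the problem into ``find a common upper bound'' plus ``lift any morphism in $I$ with target $p_I(c)$ to a morphism in $A_I$ with target $c$,'' and the second step is a clean structural fact about $A_I$ that could be reused elsewhere. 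Your approach instead builds the refinement in one stroke by taking $R=(A_I)_a\cup(A_I)_{a'}$ and hand-crafting $p\colon R^{\lhd}\to I$; the price is that you must iterate clause~(3) over every element of $(A_I)_a\cap(A_I)_{a'}$ before the final equalization at $i$. The paper's version is shorter and modular; yours is more self-contained, using only cofiniteness of $A_I$ rather than its directedness.
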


\begin{proof}
By Definition ~\ref{d:cofinal} we need to show that for every $i$ in $I$, the over category ${p_I}_{/i}$ is nonempty and connected. Let $i$ in $I$.

As noted above, $A_I^0=Ob(I)$ and $p_I|_{A_I^{0}}:Ob(I) \to I$ is the identity on $Ob(I)$. Thus $(i,id_i)$ is an object in ${p_I}_{/i}$.

Let $f_1:{p_I}(a_1)\to i$ and $f_2:{p_I}(a_2)\to i$ be two objects in ${p_I}_{/i}$. Since $A_I$ is directed, there exists $c$ in $A_I$ such that $c\geq a_1, a_2$. Applying ${p_I}$ and composing with $f_1$ and $f_2$ we get two parallel morphisms in $I$: ${p_I}(c)\to i$. Since $I$ is directed, there exists a morphism: $h:i'\to {p_I}(c)$ in $I$ that equalizes these two parallel morphisms.

We now wish to show that there exists $c'$ in $A_I$ such that $c'\geq c$ and such that ${p_I}(c')  =i'$ and the induced map: ${p_I}(c') \to {p_I}(c)$ is exactly $h$.

There exists a unique $n\geq 0$, such that $c$ is in $A_I^n\setminus A_I^{n-1}=B_I^n$.
We can write $c$ as $c = (R,p:R^{\lhd} \to I)$, where $R$ is a finite section in $A_I^{n-1}$.

Note that $R_c:=\{a$ in $A_I^n|c\geq a\}\subseteq A_I^n $ is naturally isomorphic to $R^{\lhd}$.

Define: $c' := (R_c,p':R_c^{\lhd}\to I) $ in $B_I^{n+1}$, where:
$$p'|_{R_c} = p'|_{R^{\lhd}} = p|_{R^{\lhd}},p'(\infty') = i'.$$
The map $p'(\infty') = i'\to {p_I}(c)=p(\infty)=p'(\infty)$ is defined to be $h$ (where $\infty$ and $\infty'$ are the initial objects in $R^{\lhd}$ and $R_c^{\lhd}$ respectively).

To show that $c' $ is in $B_I^{n+1}$, it remains to check that $p'|_{R_c}=p^n|_{R_c}$. But this follows from the fact that $p|_{R}=p^{n-1}|_{R}$, and the (recursive) definition of $p_n$.

Now it is clear that: $c'>c$, ${p_I}(c') = i'$ and the induced map: ${p_I}(c') \to {p_I}(c)$ is exactly $h$.

It follows that we have morphisms in ${p_I}_{/i}$:
$$\xymatrix{
{p_I}(a_1) \ar[dr]_{f_1}  & {p_I}(c') \ar[l] \ar[r] \ar[d] & {p_I}(a_2).  \ar[dl]^{f_2}\\
\empty &  i & \empty }$$
\end{proof}

We thus obtain the following:
\begin{cor}\label{p_onto}
Let $I$ be a small directed category. Then there exists a small cofinite directed set $A_I$ of infinite height and a cofinal functor: $p_I:A_I\to I$.
\end{cor}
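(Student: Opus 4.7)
The plan is simply to assemble the two preceding lemmas. Given a small directed category $I$, let $A_I$ and $p_I : A_I \to I$ be the explicit poset and functor constructed inductively in Definition \ref{d_onto}. Lemma \ref{l:A_is_directed} tells us that $A_I$ is a small cofinite directed set of infinite height, and Lemma \ref{l:A_onto} tells us that the functor $p_I$ is cofinal. Combining these two statements yields the desired pair $(A_I, p_I)$, so the corollary is essentially a bookkeeping consequence of what has already been proved.

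If there is any non-trivial content in the corollary, it is already absorbed into the proof of Lemma \ref{l:A_onto}, where given two objects $f_1 : p_I(a_1) \to i$ and $f_2 : p_I(a_2) \to i$ in the over category $(p_I)_{/i}$ one produces a common object of $A_I$ dominating both whose image under $p_I$ lies over $i$ via an equalizing morphism. This required producing, from an element $c \in B_I^n$ presented as $(R, p : R^{\lhd} \to I)$, a strictly larger element $c' \in B_I^{n+1}$ of the form $(R_c, p' : R_c^{\lhd} \to I)$ whose attached functor extends the one attached to $c$ and sends the new initial object to a prescribed $i'$ by a prescribed morphism $h$. With that machinery in place, cofinality drops out, as do directedness and infinite height, which are visible directly from the recursive construction.

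There is therefore no real obstacle remaining. The only point I would emphasize, which will matter for the proof of Proposition \ref{p:equ_0}, is that the whole construction is fully explicit and inductive: we never appeal to the axiom of choice to extract $A_I$ or $p_I$ from $I$. This is in line with the constructive viewpoint articulated in the remark following Lemma \ref{l:inverse}, and it is precisely what allows one to use Corollary \ref{p_onto} to define, in a canonical way, the functor $S : \Pro(\cC) \to \barr{\Pro}(\cC)$ that serves as the inverse equivalence to $i$.
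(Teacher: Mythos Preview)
Your proposal is correct and matches the paper's approach exactly: the paper simply writes ``We thus obtain the following'' before stating the corollary, since it is an immediate consequence of Definition~\ref{d_onto}, Lemma~\ref{l:A_is_directed}, and Lemma~\ref{l:A_onto}. Your additional remarks about the constructive nature of the argument and its relevance to defining $S$ are accurate and in the spirit of the surrounding text, though they go beyond what is strictly needed.
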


Corollary ~\ref{p_onto} is actually a well known result in the theory of pro-categories. In \cite{Isa}, Isaksen gives two references to this proposition: one is \cite{EH} Theorem 2.1.6 and the other is \cite{SGA4-I} Proposition 8.1.6.

We would like to take this opportunity to explain a slight error in the construction of \cite{EH}. We briefly recall the construction of \cite{EH} Theorem 2.1.6.

Let $\cD$ be any category. Call an object $d$ in $\cD$ strongly initial, if it is an initial object, and there are no maps into $d$ except the identity. Define:
$$M(I):=\{\cD\to  I|\cD\: is\: finite,\: and\: has\: a\: strongly\: initial\: object\}.$$

We order the set $M(I)$ by sub-diagram inclusion, so $M(I)$ is clearly cofinite.

Then \cite{EH} claims that because $I$ is directed, $M(I)$ is also directed. Apparently the idea is that given two diagrams: $F_1:\cD_1\to I$ and $F_2:\cD_2\to I$, we can take the disjoint union of $\cD_1$ and $\cD_2$, and add an initial object: $(\cD_1\coprod\cD_2)^{\lhd}$. In order to define a diagram $(\cD_1\coprod\cD_2)^{\lhd}\to I$ extending $F_1$ and $F_2$, it is thus enough to find an object $F(\infty)$ in $I$, and morphisms $F(\infty)\to F_1(\infty_1)$ and $F(\infty)\to F_2(\infty_2)$ in $I$. Since $I$ is directed this can be done. Notice, however, that we have only used the fact that $I$ satisfies one of the axioms of a directed category, namely, that for every pair of objects there is an object that dominates both. If this construction was correct it would mean that for every category $I$ satisfying only the first axiom of a directed category, there exists a directed poset $\cP$ and a cofinal functor $\cP\to I$. This would imply that $I$ is a directed category, by the lemma below.
But there are examples of categories satisfying only the first axiom of a directed category, that are not directed, for example
the category $\bullet \rightrightarrows \bullet$ or the category of hyper covers on a Grothendieck site (see ~\cite{AM}).

The reason why this construction is wrong is that $\cD_1$ and $\cD_2$ may not be disjoint (they may have an object in common), and thus one cannot always consider their disjoint union: $\cD_1\coprod\cD_2$. This may sound like a purely technical problem, since we can ``force" $\cD_1$ and $\cD_2$ to be disjoint, for example by
considering $(\cD_1\times\{0\})\coprod(\cD_2\times\{1\})$. But then $F_1$ and $F_2$ will not be sub-diagrams of $F$, rather there would exist isomorphisms from them to sub-diagrams of $F$. In other words, $M(I)$ will not be a poset.

\begin{lem}
Let $A$ be a directed category, $D$ any category and $F:A\to D$ a cofinal functor. Then $D$ is directed.
\end{lem}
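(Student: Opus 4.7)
The plan is to verify the three axioms of Definition~\ref{d_directed} for $D$ one at a time, using the hypothesis that $F_{/d}$ is nonempty and connected for every $d$ in $D$, together with the three axioms for $A$.

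The first two axioms are straightforward. For non-emptiness, since $A$ is nonempty we may pick any $a$ in $A$, and then $F(a)$ is an object of $D$. For the ``lower bound'' axiom, given $s,t$ in $D$, nonemptiness of $F_{/s}$ and $F_{/t}$ yields $a_1,a_2$ in $A$ together with maps $\phi_i\colon F(a_i)\to s$ and $F(a_2)\to t$. Applying directedness of $A$ to the pair $a_1,a_2$ gives an element $a$ with maps $a\to a_1,a\to a_2$, and composing yields the desired common source $F(a)$ mapping to both $s$ and $t$.

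The main obstacle is the ``equalizer'' axiom, where connectedness of $F_{/t}$ is only a zigzag statement and must be strengthened into a genuine span using the directedness of $A$. Given parallel $f,g\colon s\to t$, I first pick $\phi_0\colon F(a_0)\to s$ in $F_{/s}$; then $(a_0,f\phi_0)$ and $(a_0,g\phi_0)$ are two objects of $F_{/t}$, and by connectedness they are connected by a zigzag. The key sublemma I would prove by induction on the length $n$ of the zigzag is: \emph{any zigzag in $F_{/t}$ between two objects can be refined to a common predecessor $(c,\xi)$ admitting morphisms in $F_{/t}$ to both endpoints.} The induction step handles a ``wrong-way'' edge $\mu\colon a\to b$ in $A$ over $t$ by first using axiom~2 of directedness to find a common predecessor in $A$ of $c$ and $a$, and then using axiom~3 to equalize the two resulting maps into $b$; the equalized composite, together with $\xi$, produces the refined span.

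Once the sublemma is applied to $(a_0,f\phi_0)$ and $(a_0,g\phi_0)$, I obtain $(c,\xi)$ with maps $\beta_1,\beta_2\colon c\to a_0$ in $A$ satisfying $f\phi_0\circ F(\beta_1)=\xi=g\phi_0\circ F(\beta_2)$. A final application of axiom~3 of directedness to the parallel pair $\beta_1,\beta_2$ yields $\gamma\colon c'\to c$ with $\beta_1\gamma=\beta_2\gamma=:\beta$, and then $h:=\phi_0\circ F(\beta)\colon F(c')\to s$ is a morphism satisfying $fh=gh$, as required. The essential point throughout is that each of the three axioms for $A$ is used exactly to upgrade a piece of the cofinality hypothesis (an object, a connected component, an equality at the colimit level) into the corresponding piece of directedness in $D$.
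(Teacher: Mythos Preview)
Your proof is correct, but it takes a genuinely different route from the paper's. The paper opens by invoking Corollary~\ref{p_onto} to replace $A$ by a cofinal directed \emph{poset}; once $A$ is a poset, the zigzag-to-span step becomes trivial: one simply picks $q$ above all the vertices $p,p_1,\dots,p_n$ of the zigzag in $F_{/d}$, and because parallel morphisms in a poset coincide, the two induced maps $F(q)\to F(p)$ are automatically equal, giving the equalizing morphism at once. You instead work with an arbitrary directed category $A$ and prove the zigzag-to-span statement directly, by induction on the length of the zigzag, using axiom~2 to find a common predecessor and axiom~3 to coequalize the two resulting maps at each wrong-way step. Your approach is more self-contained---it avoids appealing to the nontrivial cofinal replacement of Definition~\ref{d_onto}---while the paper's is shorter given that Corollary~\ref{p_onto} is already in hand. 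Both arguments ultimately need a final application of axiom~3 to merge the two maps $c\rightrightarrows a_0$ into a single one, and you handle this correctly.
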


\begin{proof}
By Corollary ~\ref{p_onto} we may assume that $A$ is a directed poset. By Definition ~\ref{d:cofinal}, for every $c$ in $D$, the over category $F_{/c}$ is nonempty and connected.

Let $c$ and $d$ be objects in $D$. The categories $F_{/c}$ and $F_{/d}$ are non-empty, so there exist elements $q$ and $p$ in $A$, and morphisms in $D$ of the form: $F(q)\to d$ and $F(p)\to c.$

$A$ is directed, so there exists $r$ in $A$ such that $r\geq p,q$. Then $F(r)$ is in $D$, and we have morphisms in $D$ of the form:
$$F(r)\to F(q)\to d,F(r)\to F(p)\to c.$$

Let $f,g:c\to d$ be two parallel morphisms in $D$. The category $F_{/c}$ is nonempty, so there exists $p$ in $A$, and a morphism in $D$ of the form: $h:F(p)\to c$. Then $gh$ and $fh$ are in $F_{/d}$, and $F_{/d}$ is connected, so there exists elements in $A$ of the form:
$$p\leq p_1\geq p_2\leq \cdots p_n\geq p,$$
that connect $gh,fh:F(p)\to d$ in the over category $F_{/d}$. The poset $A$ is directed, so there exists $q$ in $A$, such that $q\geq p,p_1,...,p_n$. It follows that we have a commutative diagram in $D$ of the form:
$$\xymatrix{
F(p) \ar[dr]_{gh}  & F(q) \ar[l]_{l_1} \ar[r]^{l_2} \ar[d] & F(p)  \ar[dl]^{fh}\\
\empty &  d & \empty .}$$
But $l_1=l_2=l$, since $A$ is a poset. Define: $t:=hl:F(q)\to c$. Then:
$$ft=fhl=ghl=gt.$$
\end{proof}

We now turn to defining the functor $S:\Pro(\cC)\to \barr{\Pro}(\cC)$ which will be the inverse equivalence to $i$. Recall from Proposition ~\ref{p_full} and Corollary ~\ref{c_faithful} that $i$ is full and faithful.

\begin{define}\label{d:S}
We define two class functions: $s:Ob(\Pro(\cC))\to Ob(\barr{\Pro}(\cC))$ and $\phi:Ob(\Pro(\cC))\to Mor({\Pro}(\cC))$.

Let $X:I\to \cC$ be an object in $\Pro(\cC)$.
In Definition ~\ref{d_onto} we described a construction that produces a small cofinite poset $A_I$ and a functor: $p_I:A_I\to I$. In Lemma ~\ref{l:A_is_directed} we have shown that $A_I$ is a cofinite directed set of infinite height. Thus we can define:
$$s(X):=p_I^*X=X\circ p_I:A_I\to \cC.$$
Clearly $s(X)$ is an object in $\barr{\Pro}(\cC)$.

We now define:
$$\phi(X):=\nu_{p_I,X}:X\to p_I^*X=i(s(X))$$
to be the morphism in $\Pro(\cC)$ defined by $p_I$ (see the discussion following Definition ~\ref{def_pro}). Since the functor $p_I:A_I\to I$ is cofinal, we have by Lemma ~\ref{l:cofinal} that the $\phi(X)$ is an isomorphism.

We can now apply the construction given in Definition ~\ref{d:inverse} and define the functor $S$ to be:
$$S:=G_{i,s,\phi}:\Pro(\cC)\to \barr{\Pro}(\cC).$$
\end{define}

By Lemma ~\ref{l:inverse} $S$ is an inverse equivalence to $i$, so we obtain:

\begin{cor}\label{c_equiv}
The pair of functors:
$$i:\barr{\Pro}(\cC)\rightleftarrows \Pro(\cC):S$$
are inverse equivalences  of categories.
\end{cor}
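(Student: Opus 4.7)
The plan is to verify that the hypotheses of Lemma \ref{l:inverse} are all met by the functor $i$ and the data $(s,\phi)$ packaged in Definition \ref{d:S}; once this is done, the corollary follows immediately by applying that lemma to $S = G_{i,s,\phi}$. Essentially all the work has been done in the preceding results, and the proof is just an assembly step.

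First I would note that $i: \barr{\Pro}(\cC) \to \Pro(\cC)$ is fully faithful. Fullness is exactly Proposition \ref{p_full}, which constructs for any $f: F \to G$ in $\Pro(\cC)$ a 1-morphism $(\alpha,\phi)$ realizing it. Faithfulness is Corollary \ref{c_faithful}, which in turn is a direct consequence of Proposition \ref{p_directed}: two 1-morphisms $(\alpha,\phi)$ and $(\alpha',\phi')$ which induce the same map in $\Pro(\cC)$ admit a common upper bound $(\alpha'',\phi'')$ in the poset of 1-morphisms, hence lie in the same connected component, i.e.\ they represent the same morphism in $\barr{\Pro}(\cC)$.

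Second I would check that the class functions $s: Ob(\Pro(\cC)) \to Ob(\barr{\Pro}(\cC))$ and $\phi: Ob(\Pro(\cC)) \to Mor(\Pro(\cC))$ of Definition \ref{d:S} satisfy the condition of Definition \ref{d:inverse}. For any $X: I \to \cC$ in $\Pro(\cC)$, the object $s(X) = X \circ p_I: A_I \to \cC$ does live in $\barr{\Pro}(\cC)$ by Lemma \ref{l:A_is_directed}, which ensures that $A_I$ is a cofinite directed set of infinite height. Moreover, by Lemma \ref{l:A_onto} the functor $p_I: A_I \to I$ is cofinal, so Lemma \ref{l:cofinal} guarantees that $\phi(X) = \nu_{p_I, X}: X \to p_I^* X = i(s(X))$ is an isomorphism in $\Pro(\cC)$.

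With these two checks in hand, Lemma \ref{l:inverse} applies and tells us that $S := G_{i,s,\phi}: \Pro(\cC) \to \barr{\Pro}(\cC)$ is an inverse equivalence to $i$, yielding the desired pair of inverse equivalences. There is no real obstacle here: everything substantive has already been proved (the cofinal poset construction in Definition \ref{d_onto}, cofinality in Lemma \ref{l:A_onto}, and the structural results about $i$ in Proposition \ref{p_full} and Proposition \ref{p_directed}); the corollary is a one-line invocation of Lemma \ref{l:inverse}.
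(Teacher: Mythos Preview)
Your proposal is correct and follows exactly the paper's approach: the corollary is stated immediately after Definition~\ref{d:S}, and the paper's proof is the single sentence ``By Lemma~\ref{l:inverse} $S$ is an inverse equivalence to $i$,'' which is precisely the invocation you describe. You have simply unpacked the references to Proposition~\ref{p_full}, Corollary~\ref{c_faithful}, and Lemmas~\ref{l:A_is_directed}, \ref{l:A_onto}, \ref{l:cofinal} that justify the hypotheses of Lemma~\ref{l:inverse}; nothing differs.
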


\subsection{First applications}

In this subsection we present some simple application of our new model for a pro-category.

\begin{lem}
Let $$f= (\alpha_f,\phi_f):X^A  \to Y^B$$ be a 1-morphism in $\widetilde{\Pro}(\cC)$ and let
$\alpha':B \to A$ be a strictly increasing  map  such that $\alpha' \geq \alpha_f$. Then there exists a unique 1-morphism of the form $f' =(\alpha',\psi'):X^A  \to Y^B$  such that $f' \geq f$ (we denote $f' =f \circ \alpha'$).
\end{lem}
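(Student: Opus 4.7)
The plan is to define $\psi'$ as the only composition forced by the relation $f' \geq f$, and then verify that this is indeed a natural transformation.

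Specifically, for each $b \in B$, since $\alpha'(b) \geq \alpha_f(b)$ in $A$, there is a unique morphism $X(\alpha'(b) \to \alpha_f(b))$ in $\cC$. The defining diagram of the partial order on 1-morphisms forces $f' \geq f$ if and only if
$$\psi'_b = \phi_{f,b} \circ X(\alpha'(b) \to \alpha_f(b)).$$
So I would simply take this formula as the definition of $\psi'_b$. Uniqueness is then immediate: any $\psi'$ satisfying $f' \geq f$ must equal this composition componentwise.

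The one thing to check is that the collection $\{\psi'_b\}_{b \in B}$ assembles into a natural transformation $\alpha'^* X \to Y$. Given $b \leq b'$ in $B$, I need to verify
$$Y(b' \to b) \circ \psi'_{b'} = \psi'_b \circ X(\alpha'(b') \to \alpha'(b)).$$
Expanding using the definition of $\psi'$, the left-hand side equals $Y(b' \to b) \circ \phi_{f,b'} \circ X(\alpha'(b') \to \alpha_f(b'))$, which by naturality of $\phi_f$ becomes $\phi_{f,b} \circ X(\alpha_f(b') \to \alpha_f(b)) \circ X(\alpha'(b') \to \alpha_f(b'))$. By functoriality of $X$ (uniqueness of morphisms in the poset $A$), this composes to $\phi_{f,b} \circ X(\alpha'(b') \to \alpha_f(b))$. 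The right-hand side is $\phi_{f,b} \circ X(\alpha'(b) \to \alpha_f(b)) \circ X(\alpha'(b') \to \alpha'(b))$, which by the same functoriality argument also equals $\phi_{f,b} \circ X(\alpha'(b') \to \alpha_f(b))$. Hence the two sides agree.

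There is no substantive obstacle here: the statement is essentially a rewriting of the definition of the partial order, and the main content is the bookkeeping verification that the formula defines a natural transformation. The notation $f' = f \circ \alpha'$ is justified because precomposing $f$ with the ``identity'' natural transformation associated to the comparison $\alpha' \geq \alpha_f$ yields exactly this canonical extension.
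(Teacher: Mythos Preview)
Your proof is correct and follows essentially the same approach as the paper's: the paper defines $\psi'$ as the composite $\phi_f \circ p$, where $p:(\alpha')^*X \to \alpha_f^*X$ is the natural transformation induced by $\alpha' \geq \alpha_f$, which is exactly your formula $\psi'_b = \phi_{f,b}\circ X(\alpha'(b)\to\alpha_f(b))$ written componentwise. The only difference is that you spell out the naturality check explicitly, whereas the paper treats the existence of $p$ as evident.
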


\begin{proof}
The uniqueness is clear.
We have a natural transformation in $\cC^B$:
$$p:(\alpha')^*X   \to \alpha_f^*X.$$
By composing it with $\phi_f: \alpha_f^*X \to  Y$ we get  a 1-morphism:
$$ f \circ \alpha' := (\alpha',\phi_f \circ p):X^A  \to Y^B.$$
Note that clearly: $f \circ \alpha' \geq f $.
\end{proof}

\begin{cor}
For every 1-morphism $f=(\alpha_f,\phi_f)$ from $X$ to $Y$ in $\widetilde{\Pro}(\cC)$, we have that the subposet of $Mor_{\widetilde{\Pro}(\cC)}(X,Y)$ spanned by those
$f':X \to Y$ such that $f'\geq f$ is isomorphic to the poset of the strictly increasing functions $\alpha:B \to A$ such that $\alpha\geq\alpha_f$.
\end{cor}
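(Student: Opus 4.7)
The plan is to exhibit the isomorphism of posets directly using the previous lemma, which gives the essential construction.

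First, I would define the two candidate maps in both directions. In one direction, given a strictly increasing $\alpha' : B \to A$ with $\alpha' \geq \alpha_f$, the previous lemma produces a unique 1-morphism $f \circ \alpha' = (\alpha', \phi_f \circ p)$ from $X$ to $Y$ with $f \circ \alpha' \geq f$; send $\alpha' \mapsto f \circ \alpha'$. In the other direction, given a 1-morphism $f' = (\alpha_{f'}, \phi_{f'})$ with $f' \geq f$, send it to its underlying strictly increasing map $\alpha_{f'}$; the very definition of the partial order on 1-morphisms forces $\alpha_{f'} \geq \alpha_f$, so $\alpha_{f'}$ lands in the intended poset.

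Next I would verify these are mutually inverse. One composite is immediate: starting from $\alpha'$, the 1-morphism $f \circ \alpha'$ has first coordinate $\alpha'$, and we recover $\alpha'$. The other composite is where the uniqueness clause of the previous lemma does the work: given $f' \geq f$ with first coordinate $\alpha_{f'}$, the lemma says $f'$ is the \emph{unique} 1-morphism with first coordinate $\alpha_{f'}$ satisfying $f' \geq f$; hence $f' = f \circ \alpha_{f'}$, which is exactly the image of $\alpha_{f'}$ under the first map.

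It remains to check that the bijection is order-preserving in both directions. In one direction, if $f_1 \geq f_2$ are both $\geq f$, then by definition of the order on 1-morphisms, $\alpha_{f_1} \geq \alpha_{f_2}$, so passing to the underlying maps is order-preserving. In the other direction, suppose $\alpha_2 \geq \alpha_1 \geq \alpha_f$ and write $f \circ \alpha_i = (\alpha_i, \phi_f \circ p_i)$ where $(p_i)_b$ is the map $X(\alpha_i(b) \to \alpha_f(b))$. To check $f \circ \alpha_2 \geq f \circ \alpha_1$, we must verify that for every $b \in B$ the composite
$$X(\alpha_2(b)) \xrightarrow{X(\alpha_2(b) \to \alpha_1(b))} X(\alpha_1(b)) \xrightarrow{(\phi_f)_b \circ (p_1)_b} Y(b)$$
agrees with $(\phi_f)_b \circ (p_2)_b$. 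Since $\alpha_f(b) \leq \alpha_1(b) \leq \alpha_2(b)$, functoriality of $X$ gives $(p_1)_b \circ X(\alpha_2(b) \to \alpha_1(b)) = X(\alpha_2(b) \to \alpha_f(b)) = (p_2)_b$, so both composites equal $(\phi_f)_b \circ (p_2)_b$ and the diagram commutes.

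There is no real obstacle: the previous lemma already isolates the main content (existence and uniqueness), and the remaining verifications are routine diagram chases using functoriality of $X$. The short argument above essentially just unwinds the definition of the partial order on 1-morphisms.
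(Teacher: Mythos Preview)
Your proposal is correct and follows exactly the approach the paper intends: the corollary is stated without proof, as an immediate consequence of the preceding lemma, and your write-up simply unpacks that consequence by exhibiting the bijection and checking it is an order-isomorphism. The only thing one might streamline is the order-preservation check in the second direction, which also follows from uniqueness (apply the lemma to $f\circ\alpha_1$ with $\alpha_2\geq\alpha_1$), but your direct verification via functoriality of $X$ is equally valid.
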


\begin{lem}\label{l:SID}
Let $A,B$ be cofinite directed sets of infinite height. Then the poset of strictly increasing functions from $B$ to $A$ is directed and of infinite height.
\end{lem}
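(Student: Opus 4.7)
The plan is to prove the three required properties — non-emptiness, pairwise upper bounds, and infinite height — by three parallel recursions on the degree function of $B$ (Definition~\ref{def_deg}), using cofiniteness of $B$ together with directedness and infinite height of $A$ to gain the needed room at each stage. Let $P$ denote the poset of strictly increasing functions $B \to A$.

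For non-emptiness I will construct a strictly increasing $\alpha: B \to A$ by recursion on degree. Suppose $\alpha$ has been defined on $B^{n-1}$ and is strictly increasing there. For each $b \in B^n \setminus B^{n-1}$ the set $B^{n-1}_b = \{b' \in B : b' < b\}$ is finite (by cofiniteness of $B$), so directedness of $A$ yields an upper bound $a \in A$ for the finite set $\{\alpha(b') : b' < b\}$, and infinite height of $A$ lets us choose $\alpha(b) \in A$ with $\alpha(b) > a$. Then $\alpha(b) > \alpha(b')$ for every $b' < b$, extending the construction to $B^n$.

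For pairwise upper bounds, given $\alpha_1,\alpha_2 \in P$, I will build $\alpha \geq \alpha_1,\alpha_2$ by the same recursion: at stage $b$ of degree $n$, the finite set $\{\alpha_1(b),\alpha_2(b)\} \cup \{\alpha(b') : b' < b\}$ admits an upper bound in $A$ by directedness, and we pick $\alpha(b)$ strictly above this bound using infinite height of $A$. This simultaneously guarantees $\alpha(b) \geq \alpha_i(b)$ for $i=1,2$ and $\alpha(b) > \alpha(b')$ for all $b' < b$. For infinite height of $P$, given $\alpha \in P$, I will construct $\alpha' \in P$ with $\alpha'(b) > \alpha(b)$ for every $b$, again by the same recursion: at stage $b$ pick $\alpha'(b)$ strictly above an upper bound in $A$ of the finite set $\{\alpha(b)\} \cup \{\alpha'(b') : b' < b\}$. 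Then $\alpha' > \alpha$ in $P$.

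There is no real obstacle; the only point that requires care is the routine bookkeeping, namely that at every inductive step the relevant finite set indeed admits an upper bound in $A$ (directedness of $A$ combined with Lemma~\ref{l:eqiv_directed}), and that infinite height of $A$ always provides a strictly larger element so that the strictly-increasing condition is preserved. Cofiniteness of $B$ is used precisely to ensure the recursion proceeds over finite layers at each step.
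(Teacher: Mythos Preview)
Your proof is correct and follows essentially the same inductive scheme as the paper: recursion along the degree filtration of $B$, using cofiniteness of $B$ to keep each step finite and directedness plus infinite height of $A$ to produce the needed strictly larger element. The only difference is packaging: the paper handles all three properties in a single construction by building, from any pair $\alpha,\beta$, a $\gamma$ with $\gamma(b)>\alpha(b),\beta(b)$ for all $b$ (which simultaneously yields directedness and, taking $\alpha=\beta$, infinite height, while the recursion itself witnesses non-emptiness), whereas you spell out three parallel recursions.
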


\begin{proof}
Let $\alpha,\beta:B\to A$ be strictly increasing functions. We will construct a strictly increasing function $\gamma:B\to A$ such that $\gamma(b)>\alpha(b),\beta(b)$, for every $b$ in $B$.

We define $\gamma$ recursively.
Let $n\geq 0$. Suppose we have defined a strictly increasing function $\gamma|_{B^{n-1}}:B^{n-1}\to A$ such that $\gamma(b)>\alpha(b),\beta(b)$, for every $b$ in $B^{n-1}.$

Let $b$ be an element in $B^n\setminus B^{n-1}$. The poset $B$ is cofinite so the set $B_b=\{b'\in B|b\geq b'\}$ is finite. Since $A$ is directed and of infinite height, we can find an element $\gamma(b)$ in $A$ such that $\gamma(b)>\gamma(b')$ for every $b'<b$ in $B$, and such that $\gamma(b)> \alpha(b),\beta(b).$ Now combine all the resulting $\gamma(b)$ for different $b$ in $B^n\setminus B^{n-1}$ to obtain the recursive step.
\end{proof}

\begin{prop}\label{p:ffull}
Let $D$ be a category, and consider the natural functor:
$$j=j_D:\barr{\Pro}(\cC^D) \to \barr{\Pro}(\cC)^{D}.$$
Then the following hold:
\begin{enumerate}
\item If $D$ has a finite number of objects then $j_D$ is faithful.
\item If $D$ is finite then $j_D$ is full and  faithful.
\end{enumerate}
\end{prop}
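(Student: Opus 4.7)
The plan is to unpack $j_D$ explicitly and then show that every coherence condition appearing in faithfulness or fullness can be enforced by enlarging finitely many strictly increasing maps, using Proposition~\ref{p_directed} together with Lemma~\ref{l:SID}. Concretely, an object $X^A$ of $\barr{\Pro}(\cC^D)$ is the same thing as a functor $A \times D \to \cC$, and $j_D$ sends it to the diagram $d \mapsto X(-)(d) \in \barr{\Pro}(\cC)$, with each $\delta : d \to d'$ in $D$ acting by $[id_A, X(-)(\delta)]$; on 1-morphisms, $(\alpha,\phi)$ goes to the family $d \mapsto [\alpha, \phi_d]$, which is automatically natural in $D$. Thus a morphism in $\barr{\Pro}(\cC)^D$ between $j_D(X^A)$ and $j_D(Y^B)$ is exactly a collection of $\tau_d \in \Hom_{\barr{\Pro}(\cC)}(X(-)(d), Y(-)(d))$ satisfying the naturality squares in $\barr{\Pro}(\cC)$.

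For part~(1), starting from two 1-morphisms $(\alpha,\phi), (\alpha',\phi') : F^A \to G^B$ in $\widetilde{\Pro}(\cC^D)$ whose images under $j_D$ coincide, I would get $[\alpha,\phi_d] = [\alpha',\phi'_d]$ in $\barr{\Pro}(\cC)$ for every $d$. Applying Proposition~\ref{p_directed} pointwise yields, for each $d$, a 1-morphism $(\alpha''_d,\phi''_d)$ in $\widetilde{\Pro}(\cC)$ dominating both $(\alpha,\phi_d)$ and $(\alpha',\phi'_d)$. The crucial step is to amalgamate these: by Lemma~\ref{l:SID} the poset of strictly increasing maps $B \to A$ is directed, and since $Ob(D)$ is finite I can choose one strictly increasing $\alpha'' \geq \alpha''_d$ for every $d$. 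I would then \emph{define} $\phi''_b := \phi_b \circ F(\alpha''(b) \to \alpha(b))$, which is a natural transformation in $\cC^D$ because both factors are. A short chase using $\alpha''(b) \geq \alpha''_d(b)$ and the domination property of $(\alpha''_d,\phi''_d)$ shows that $\phi''_b$ also equals $\phi'_b \circ F(\alpha''(b) \to \alpha'(b))$ at each $d$, hence as a morphism in $\cC^D$. Therefore $(\alpha'',\phi'')$ dominates both $(\alpha,\phi)$ and $(\alpha',\phi')$ in $\widetilde{\Pro}(\cC^D)$, giving $[\alpha,\phi] = [\alpha',\phi']$.

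For part~(2), given $\tau : j_D(X^A) \to j_D(Y^B)$ in $\barr{\Pro}(\cC)^D$, I would proceed in two stages. First, lift each component $\tau_d$ to a 1-morphism $(\alpha_d,\phi_d)$ in $\widetilde{\Pro}(\cC)$, and, using Lemma~\ref{l:SID} and finiteness of $Ob(D)$ as before, replace all the $\alpha_d$'s by a common strictly increasing $\alpha : B \to A$ with $\alpha \geq \alpha_d$ for every $d$, obtaining $\tau_d = [\alpha, \psi_d]$ uniformly in $d$. Second, I enforce the naturality squares on the nose: for each $\delta \in Mor(D)$ the square for $\tau$ commutes in $\barr{\Pro}(\cC)$, which says that two 1-morphisms of the form $(\alpha, \cdot)$ in $\widetilde{\Pro}(\cC)$ agree in $\barr{\Pro}(\cC)$, so Proposition~\ref{p_directed} gives a strictly increasing $\alpha_\delta \geq \alpha$ after which they become equal on the nose at every level $b$. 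Since $D$ is finite, $Mor(D)$ is finite, and Lemma~\ref{l:SID} produces a single strictly increasing $\alpha' \geq \alpha_\delta$ for every $\delta$; enlarging all $\psi_d$'s to $\alpha'$ yields a family $\phi'_d$ that is \emph{strictly} natural in $d$, i.e.\ assembles into a natural transformation $\phi' : X \circ \alpha' \to Y$ in $(\cC^D)^B$. The 1-morphism $(\alpha',\phi')$ in $\widetilde{\Pro}(\cC^D)$ then lifts $\tau$.

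The main obstacle in both parts is turning pointwise upper bounds into a single one, since Proposition~\ref{p_directed} produces only one at a time. Lemma~\ref{l:SID} resolves this for finite collections, which is precisely why finiteness of $Ob(D)$ suffices in (1) (the naturality of the single-formula $\phi''$ in $D$ comes for free), while part (2) additionally needs finitely many morphisms in $D$, because each naturality square requires its own enlargement.
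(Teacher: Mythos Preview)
Your proposal is correct and follows essentially the same strategy as the paper: reduce both faithfulness and fullness to the fact that finitely many strictly increasing maps $B\to A$ can be dominated by a single one (Lemma~\ref{l:SID}), using Proposition~\ref{p_directed} (or the uniqueness lemma preceding Lemma~\ref{l:SID}) to handle each object or morphism of $D$ separately. The only organizational difference is that in part~(2) you first unify over $Ob(D)$ and then over $Mor(D)$, whereas the paper goes directly to a family $\{\alpha_e\}_{e\in Mor(D)}$ with each $\alpha_e\geq\alpha_{d_1},\alpha_{d_2}$ and then takes a single $\alpha'\geq\alpha_e$; your two-step version is equivalent and arguably clearer.
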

\begin{proof}
Note that an object $X^A$ in $\barr{\Pro}(\cC^D)$ can be considered  as a functor $X:A\times D \to \cC$.
For every object $d$ in $D$ we shall denote by $X_d:A \to \cC$  the restriction of $X$ to $d$ in the second coordinate.

Let $X^A$ and $Y^B$ be objects in $\widetilde{\Pro}(\cC^D)$.

Assume that $D$ has a finite number of objects. We need to show that:
$$j:\Hom_{\barr{\Pro}(\cC^D)}(X^A,Y^B)\to \Hom_{\barr{\Pro}(\cC)^{D}}(j(X^A),j(Y^B))$$
is injective.

Let $f,g:X^A \to Y^B$ be two  maps  in $ \widetilde{\Pro}(\cC^D)$.
Assume $j(f) = j(g):j(X^A) \to j(Y^B)$ in $\barr{\Pro}(\cC)^{D}$. Note that $j(f)$ and $j(g)$ are morphisms in the functor category $\barr{\Pro}(\cC)^{D}$, or in other words natural transformations between the two functors $j(X^A),j(Y^B):D\to \barr{\Pro}(\cC)$. Thus, for every object $d$ in $D$,
we have that  $j(f)(d),j(g)(d):X_d^A \to Y_d^B $ are the same map in $\barr{\Pro(\cC)}$.
We get that for every object $d$ in $D$, there exists some strictly increasing function $\alpha_d : B \to A$ such that $\alpha_d \geq \alpha_{j(f)(d)},\alpha_{j(g)(d)}$ and
$j(f)\circ \alpha_d  = j(g) \circ \alpha_d : \alpha_d^*X \to Y$ in $\cC^B$.

Now choose some strictly increasing function $\alpha':B\to A$ such that
$\alpha' \geq \alpha_d$ for all $d$ in $D$ (see Lemma ~\ref{l:SID}). We get that $f \circ \alpha' = g \circ \alpha'$ in $\widetilde{\Pro}(\cC^D)$, and thus that $[f]=[g]$ in $\barr{\Pro}(\cC^D)$.

Assume now that $D$ is finite. We need to show that:
$$j:\Hom_{\barr{\Pro}(\cC^D)}(X^A,Y^B)\to \Hom_{\barr{\Pro}(\cC)^{D}}(j(X^A),j(Y^B))$$
is surjective.

Let $f:j(X)\to j(Y)$ be a natural transformation. We have, for every object $d$ in $D$,
a map $[\alpha_d,\phi_d] = f(d):X_d^A \to Y_d^B$ in $\barr{\Pro}(\cC)$.

Let $e:d_1 \to d_2$ be a morphism in $D$.
We have morphisms in $\barr{\Pro}(\cC)$:
$$[id_A,\phi_{X_{e}} ] = X_e: X_{{d_1}}^A \to X_{{d_2}}^A,$$
$$[id_B,\phi_{Y_{e}} ] = Y_e: Y_{{d_1}}^B \to Y_{{d_2}}^B.$$
Since $f$ is a natural transformation we have an equality in $\barr{\Pro}(\cC)$:
$$[id_B,\phi_{Y_{e}}]\circ [\alpha_{d_1},\phi_{d_1}]=Y_e \circ f(d_1) =f(d_2) \circ X_e=[\alpha_{d_2},\phi_{d_2}]\circ [id_A,\phi_{X_{e}}].$$
Thus there exists some 1-morphism $(\alpha_e,\phi_e)$ such that:
$$(\alpha_e,\phi_e)\geq(\alpha_{d_2},\phi_{d_2})\circ (id_A,\phi_{X_{e}}),(id_B,\phi_{Y_{e}})\circ (\alpha_{d_1},\phi_{d_1}).$$
Note that $\alpha_e:B \to A$ is a strictly increasing function and we have:
$\alpha_e \geq \alpha_{d_1},\alpha_{d_2}$  and:
$$(\alpha_e,\phi_e)=((\alpha_{d_2},\phi_{d_2})\circ (id_A,\phi_{X_{e}}))\circ \alpha_e,$$
$$(\alpha_e,\phi_e)=((id_B,\phi_{Y_{e}})\circ (\alpha_{d_1},\phi_{d_1}))\circ \alpha_e,$$
$$[\alpha_e,\phi_e] = Y_e \circ f(d_1) = f(d_2) \circ X_e.$$

Now choose some strictly increasing function $\alpha':B \to A$ such that
$\alpha' \geq \alpha_e$ for every morphism $e$ in $D$ (see Lemma ~\ref{l:SID}). In particular, we have that $\alpha' \geq \alpha_d$ for every object $d$ in $D$.

For every object $d$ in $D$ we have:
$$[\alpha',\psi'_d] = [(\alpha_d,\phi_d) \circ \alpha'] =  f(d),$$
and for every morphism $e:d_1 \to d_2$ in $D$ we have:
$$[\alpha',\phi'_e] = [(\alpha_e,\phi_e)\circ \alpha'] =Y_e \circ f(d_1) = f(d_2) \circ X_e.$$

Let $e:d_1 \to d_2$ be a  morphism in $D$. It is not hard to verify that:
$$((\alpha_{d_2},\phi_{d_2})\circ \alpha')\circ (id_A,\phi_{X_e})\geq(\alpha_{d_2},\phi_{d_2})\circ (id_A,\phi_{X_e}).$$
We thus get:
$$(\alpha',\phi'_e)=((\alpha_{d_2},\phi_{d_2})\circ (id_A,\phi_{X_e}))\circ \alpha'=((\alpha_{d_2},\phi_{d_2})\circ \alpha')\circ (id_A,\phi_{X_e}).$$
Similarly we have:
$$(\alpha',\phi'_e)=((id_B,\phi_{Y_{e}})\circ (\alpha_{d_1},\phi_{d_1}))\circ \alpha'=(id_B,\phi_{Y_e})\circ ((\alpha_{d_1},\phi_{d_1})\circ \alpha').$$
We thus have an equality in $\widetilde{\Pro}(\cC)$:
$$(\alpha',\phi_{Y_e}\circ \psi'_{d_1})=(id_B,\phi_{Y_e})\circ (\alpha',\psi'_{d_1}) =(id_B,\phi_{Y_e})\circ ((\alpha_{d_1},\phi_{d_1})\circ \alpha')=
$$
$$=((\alpha_{d_2},\phi_{d_2})\circ \alpha')\circ (id_A,\phi_{X_e})
= (\alpha',\psi'_{d_2})\circ (id_A,\phi_{X_{e}})= (\alpha',\psi'_{d_2}\circ(\phi_{X_e})_{\alpha'}).$$

We shall now construct a 1-morphism  $(\alpha',\psi) = g : X^A \to Y^B$ in $\widetilde{\Pro}(\cC^D)$ such that $j([g]) = f$. $\psi$ can be described as a morphism $\psi:(\alpha')^*X \to Y$ in $\cC^{B\times D}$.
For objects $d$ in $D$ and $b$ in $B$ we take $\psi_{b,d} := (\psi'_d)_b$. To show that $\psi$ is indeed a natural transformation we need to show that for every morphism $e:d_1 \to d_2$  in $D$ and every $b' \geq b$ in $B$, the diagram:
$$
\xymatrix{
X_{\alpha'(b'),d_1}\ar[d] \ar[r] & Y_{b',d_1}\ar[d] \\
X_{\alpha'(b),d_2} \ar[r] & Y_{b,d_2} \\
}
$$
commutes. Indeed, consider the diagram:
$$
\xymatrix{
X_{\alpha'(b'),d_1}\ar[d] \ar[r] & Y_{b',d_1}\ar[d] \\
X_{\alpha'(b),d_1}\ar[d] \ar[r] & Y_{b,d_1}\ar[d] \\
X_{\alpha'(b),d_2} \ar[r] & Y_{b,d_2} \\
}
$$
Now the top square commutes since $\psi'_{d_1}$ is a natural transformation and the bottom one commutes  by the equality:
$$\phi_{Y_e}\circ \psi'_{d_1}=\psi'_{d_2}\circ(\phi_{X_e})_{\alpha'}.$$

It is now easy to verify that indeed  $j([g]) = f$.
\end{proof}

\begin{cor}
Let $D$ be a finite category and let $X:D \to \barr{\Pro}(\cC)$ be a diagram in the image of $j$. Then:
\begin{enumerate}
\item If $\cC$ has finite limits, then the limit of $X$ in $\barr{\Pro}(\cC)$ can be computed levelwise.
\item If $\cC$ has finite colimits, then the colimit of $X$ in $\barr{\Pro}(\cC)$ can be computed levelwise.
\end{enumerate}
\end{cor}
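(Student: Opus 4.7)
The plan is to reduce both parts to the fully faithfulness of $j_D$ provided by Proposition~\ref{p:ffull} together with the universal property of pointwise (co)limits in $\cC^A$. Since $X$ is in the image of $j = j_D$, write $X = j(\tilde{X}^A)$ for some $\tilde{X}: A \to \cC^D$, which we regard interchangeably as a functor $\tilde{X}:A\times D \to \cC$. Here $A$ is a cofinite directed set of infinite height.

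For part (1), I define $L:A\to \cC$ by $L(a) := \lim_{d\in D}\tilde{X}(a,d)$, taken pointwise in $A$. Since $\cC$ has finite limits and $D$ is finite, this exists, and $L^A$ is an object of $\barr{\Pro}(\cC)$ with the obvious indexing. I would then verify that the canonical projections $L^A \to X(d)$ exhibit $L^A$ as the limit of $X$ in $\barr{\Pro}(\cC)$ by constructing, for any $Y^B \in \barr{\Pro}(\cC)$, a natural bijection
$$\Hom_{\barr{\Pro}(\cC)}(Y^B, L^A) \;\cong\; \Hom_{\barr{\Pro}(\cC)^{D}}(\Delta Y, X).$$
This bijection factors through $\barr{\Pro}(\cC^D)$ in two steps. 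First, for any strictly increasing $\alpha:A\to B$, the universal property of the pointwise limit gives a bijection between natural transformations $\alpha^*Y \to L$ in $\cC^A$ and natural transformations $\alpha^*(\Delta_D Y) \to \tilde{X}$ in $(\cC^D)^A$, where $\Delta_D Y$ denotes $Y$ viewed as a constant diagram into $\cC^D$. This bijection is compatible with the partial order on 1-morphisms (both orders depend on $\alpha$ identically, and the natural-transformation components correspond), so it descends to
$$\Hom_{\barr{\Pro}(\cC)}(Y^B, L^A) \;\cong\; \Hom_{\barr{\Pro}(\cC^D)}(\Delta_D Y^B, \tilde{X}^A).$$
Second, apply Proposition~\ref{p:ffull}(2) to conclude
$$\Hom_{\barr{\Pro}(\cC^D)}(\Delta_D Y^B, \tilde{X}^A) \;\cong\; \Hom_{\barr{\Pro}(\cC)^{D}}(j(\Delta_D Y^B), j(\tilde{X}^A)) \;=\; \Hom_{\barr{\Pro}(\cC)^{D}}(\Delta Y, X),$$
since $j$ sends constants to constants. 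Naturality in $Y$ is immediate from the construction, so $L^A$ represents the limit functor.

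For part (2), I run the dual argument: define $C:A\to \cC$ by $C(a):=\mathop{\precolim}_{d\in D}\tilde{X}(a,d)$ pointwise, which again gives an object of $\barr{\Pro}(\cC)$. The universal property of the pointwise colimit identifies, for any strictly increasing $\alpha:B\to A$, natural transformations $\alpha^*C \to Z$ in $\cC^B$ with natural transformations $\alpha^*\tilde{X} \to \Delta_D Z$ in $(\cC^D)^B$. The same two-step bijection then yields $\Hom_{\barr{\Pro}(\cC)}(C^A, Z^B) \cong \Hom_{\barr{\Pro}(\cC)^{D}}(X, \Delta Z)$, exhibiting $C^A$ as the colimit of $X$.

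The only nontrivial point is checking that the universal-property bijection on 1-morphisms in step one respects the order relation used to define morphisms in $\barr{\Pro}$, so that it passes to connected components. This is a direct bookkeeping: both sides use the same strictly increasing $\alpha$, and an inequality $(\alpha',\phi')\geq(\alpha,\phi)$ unpacks into a commutativity condition pointwise in $A$ that translates, via the universal property applied at each $a\in A$, to the corresponding condition on the $\cC^D$-valued transformations. Everything else is formal.
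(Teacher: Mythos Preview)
Your proposal is correct and follows essentially the same route as the paper: define the levelwise (co)limit, identify $\Hom_{\barr{\Pro}(\cC)}(Y,L)$ with $\Hom_{\barr{\Pro}(\cC^D)}(\Delta_D Y,\tilde X)$ directly from the definition of morphisms in $\barr{\Pro}$ and the pointwise universal property, and then apply Proposition~\ref{p:ffull} to pass to $\barr{\Pro}(\cC)^D$. The paper compresses your first step into the single remark that the left isomorphism ``follows from the definition of morphisms in $\barr{\Pro}$'', which is exactly the bookkeeping you spell out about the order on 1-morphisms.
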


\begin{proof}
We prove (1) and the proof of (2) is identical. Suppose $X:A\to \cC^D$ (we abuse notation and don't write the functor $j$ explicitly). Let $\lim X:A\to \cC$ denote the levelwise limit of $X$, and let $\Delta:\barr{\Pro}(\cC)\to \barr{\Pro}(\cC)^D$
denote the constant (diagonal) functor.

Let $K:B\to \cC$ be an object in $\barr{\Pro}(\cC)$. Clearly $\Delta(K):D \to \barr{\Pro}(\cC)$ is in the image of $F$ so by Proposition ~\ref{p:ffull} we have:
$$\Hom_{\barr{\Pro}(\cC)}(K,\lim X)\cong\Hom_{\barr{\Pro}(\cC^D)}(\Delta(K),X)\cong \Hom_{\barr{\Pro}(\cC)^{D}}(\Delta(K),X).$$
The isomorphism on the left follows from the definition of morphisms in $\barr{\Pro}$.
\end{proof}

We will now prove a result for $\barr{\Pro}$, namely Corollary ~\ref{p:natural}, that is known for the usual $\Pro$ (See \cite{Mey}). The proof for $\barr{\Pro}$ is somewhat simpler.

\begin{define}
We say that a category $D$ is \emph{loopless} if for every object $d$ in $D$ we have $Mor_D(d,d) = \{id_d\}$.
We say that $D$ is \emph{strongly loopless} if it is loopless and in $D$ only equal objects are isomorphic.
\end{define}

The category $D = \Delta^n$ for $n \geq 0$ is an example of a strongly loopless category.

Note that every loopless category has a full strongly loopless subcategory that is equivalent to it (just choose one object from every isomorphism class).

From now until the end of this section we let $D$ be a constant finite strongly loopless category.

\begin{define}\label{d:natural}
Let $F$ be an object in $\barr{\Pro}(\cC)^D$. We will describe a construction that produces a small cofinite directed poset of infinite height $\widetilde{A}_F$, a functor $g(F):\widetilde{A}_F\to \cC^{D}$ and an isomorphism:
$$\phi(F):F\xrightarrow{}j_D(g(F)),$$
in $\barr{\Pro}(\cC)^D$.

We first choose an ordering  $\{d_0,...,d_n\} = Ob(D)$ such that
for $i<j$ we have $\Hom_D(d_i,d_j) = \emptyset$.

For every $0\leq i \leq n$, $F(d_i):A_i \to \cC$ is an object in $\barr{\Pro}(\cC)$.
For every morphism $f$ in $\Hom_D(d_i,d_j)$ we have that:
$$F(f) = [\alpha_{f},\phi_{f}]:F(d_i)\to F(d_j)$$
is a morphism in $\barr{\Pro}(\cC)$ ($\phi_{f}:A_j\to A_i$ is a strictly increasing function).

Consider the cofinite directed set $\prod A := \prod \limits ^n_{i=0}  A_i$. We define $\widetilde{A}_F$ to be its subposet:
$$\widetilde{A}_F := \{ (a_0,...,a_n) \in \prod A |for\: every\: 0\leq i <j \leq n,f\in \Hom_D(d_j,d_i)$$
$$we\: have: a_j \geq \alpha_{f}(a_i) \}.$$

We now define a functor: $X_F:\widetilde{A}_F \times D \to \cC$. On objects $X_F$ is defined by $X_F((a_0,...,a_n),d_i) = F(d_i)_{a_i}$.
Given a map  $ g = ((a'_0,...,a'_n) \geq  (a_0,...,a_n),f:d_j \to d_i)$ in $\widetilde{A}_F \times D$ we define
$$X_F(g): X_F((a'_0,...,a'_n),d_j) \to X_F((a_0,...,a_n),d_i)$$
to be the composition
$$X_F((a'_0,...,a'_n),d_j) = F(d_j)_{a'_j} \to F(d_j)_{a_j} \to$$
$$F(d_j)_{\alpha_{f}(a_i)} \xrightarrow{\phi_{f}} F(d_i)_{a_i} = X_F((a_0,...,a_n),d_i).$$
This is well defined since we have $a'_j \geq a_j \geq \alpha_{f}(a_i)$ (by definition of $\widetilde{A}_F$). We define $g(F)$ to be the functor $g(F):\widetilde{A}_F\to \cC^{D}$ that corresponds to $X_F$.

We will show shortly (see Lemma ~\ref{l:directed2}) that $\widetilde{A}_F$ is a cofinite directed set of infinite height. Thus $g(F)$ is an object in $\barr{\Pro}(\cC^D)$. Clearly the projections $\widetilde{A}_F \hookrightarrow\prod A\to A_i$ induce a morphism $\psi(F):i(F)\xrightarrow{}i(j_D(g(F)))$ in ${\Pro}(\cC)^D$, where $i:\barr{\Pro}(\cC)\to \Pro(\cC)$ is the natural functor
(see the discussion following Definition ~\ref{def_pro}). By Corollary ~\ref{c_equiv}, $\psi(F)$ determines a unique morphism $\phi(F):F\xrightarrow{}j_D(g(F))$ in $\barr{\Pro}(\cC)^D$.
\end{define}

\begin{lem}
For every element $(a_0,...,a_n)$ in $\prod A$ there exists an element $(c_0,...,c_n)$ in $\widetilde{A}_F$ such that
$(c_0,...,c_n) \geq  (a_0,...,a_n)$.
\end{lem}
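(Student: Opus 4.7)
The plan is to construct the tuple $(c_0,\dots,c_n)$ coordinate by coordinate, exploiting the chosen ordering on $Ob(D)$. The key observation is that the defining condition of $\widetilde{A}_F$ is \emph{upper-triangular}: the constraint $a_j \geq \alpha_f(a_i)$ appears only when $i < j$ (since $\Hom_D(d_j,d_i)$ is the only direction in which morphisms can exist for $i<j$), so when building $c_j$ we may treat $c_0,\dots,c_{j-1}$ as already fixed and view the constraints on $c_j$ as a finite list of lower bounds inside the single directed set $A_j$.

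Concretely, I would proceed by induction on $j \in \{0,1,\dots,n\}$, constructing $c_j \in A_j$ satisfying $c_j \geq a_j$ and $c_j \geq \alpha_f(c_i)$ for every $0 \leq i < j$ and every $f \in \Hom_D(d_j,d_i)$. Set $c_0 := a_0$ (the upper-bound condition is vacuous at $j=0$). For the inductive step, with $c_0,\dots,c_{j-1}$ already chosen, form the subset
$$S_j := \{a_j\} \cup \{\alpha_f(c_i) : 0 \leq i < j,\ f \in \Hom_D(d_j,d_i)\} \subseteq A_j,$$
which is finite because $D$ is finite (so there are finitely many pairs $(i,f)$, and each $\alpha_f(c_i)$ is a single element of $A_j$). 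Since $A_j$ is a directed poset, by Lemma~\ref{l:eqiv_directed} there exists $c_j \in A_j$ with $c_j \geq x$ for every $x \in S_j$, which completes the inductive step.

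After $n+1$ steps the tuple $(c_0,\dots,c_n)$ satisfies all the defining inequalities of $\widetilde{A}_F$ and dominates $(a_0,\dots,a_n)$ coordinatewise in $\prod A$, proving the lemma. There is no real obstacle here — the proof is a clean finite induction — and the only subtlety worth pointing out is that finiteness of $D$ is used precisely to ensure that $S_j$ is a finite subset of $A_j$, so that directedness of $A_j$ alone (and not the infinite-height hypothesis) is enough to produce an upper bound at each stage.
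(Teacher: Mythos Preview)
Your proof is correct and is essentially identical to the paper's: both set $c_0 := a_0$ and then inductively choose $c_j \in A_j$ dominating $a_j$ and the finitely many elements $\alpha_f(c_i)$ for $i<j$, using directedness of $A_j$. The only cosmetic difference is that you package the finitely many lower bounds into an explicit set $S_j$ and note the role of finiteness of $D$, which the paper leaves implicit.
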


\begin{proof}
We will construct the element $(c_0,...,c_n)$ recursively.

We first define $c_0:=a_0$. Let $0\leq m\leq n-1$. Suppose we have defined elements $c_0,...,c_m$ in $A_0,...,A_m$ respectively, such that $c_i\geq a_i$ for every $i=0,...,m$ and such that for every $0\leq i <j \leq m$ and every $f$ in $\Hom_D(d_j,d_i)$ we have $c_j \geq \alpha_{f}(c_i)$.

Since $A_{m+1}$ is directed we can find an element $c_{m+1}$ in $A_{m+1}$ such that $c_{m+1}\geq a_{m+1}$ and such that for every $0\leq i <m+1$ and every $f$ in $\Hom_D(d_{m+1},d_i)$ we have $c_{m+1} \geq \alpha_{f}(c_i)$.

Clearly $(c_0,...,c_n)$ satisfies the desired properties.
\end{proof}

\begin{lem}\label{l:directed2}
The poset $\widetilde{A}_F$ is  cofinite, directed and of infinite height.
\end{lem}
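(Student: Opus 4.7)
The plan is to verify the three properties in turn, in each case exploiting the fact that $\widetilde{A}_F$ is a subposet of the product $\prod A = \prod_{i=0}^n A_i$, and that the preceding lemma lets us ``correct'' an arbitrary element of $\prod A$ into one of $\widetilde{A}_F$ lying above it.

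First I would dispose of cofiniteness. For $x = (a_0, \ldots, a_n) \in \widetilde{A}_F$, the lower set $(\widetilde{A}_F)_x$ is contained in $(\prod A)_x = \prod_{i=0}^n (A_i)_{a_i}$. Each $(A_i)_{a_i}$ is finite by cofiniteness of $A_i$, so the product and a fortiori $(\widetilde{A}_F)_x$ is finite.

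Next, for directedness, by Lemma~\ref{l:eqiv_directed} it suffices to show that any two elements of $\widetilde{A}_F$ admit a common upper bound. Given $(a_0, \ldots, a_n), (a'_0, \ldots, a'_n) \in \widetilde{A}_F$, directedness of each $A_i$ yields $b_i \in A_i$ with $b_i \geq a_i, a'_i$. The tuple $(b_0, \ldots, b_n)$ lies in $\prod A$, and by the previous lemma there is $(c_0, \ldots, c_n) \in \widetilde{A}_F$ with $(c_0, \ldots, c_n) \geq (b_0, \ldots, b_n) \geq (a_0, \ldots, a_n), (a'_0, \ldots, a'_n)$, as required.

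Finally, for infinite height, let $(a_0, \ldots, a_n) \in \widetilde{A}_F$. Since each $A_i$ has infinite height, pick $a'_i \in A_i$ with $a'_i > a_i$ for every $i$. Applying the previous lemma to $(a'_0, \ldots, a'_n) \in \prod A$ yields $(c_0, \ldots, c_n) \in \widetilde{A}_F$ with $c_i \geq a'_i > a_i$ for every $i$, and in particular $(c_0, \ldots, c_n) > (a_0, \ldots, a_n)$ in $\widetilde{A}_F$. There is no real obstacle in this argument; the entire content has been packaged into the preceding lemma, which is what makes the constraint defining $\widetilde{A}_F$ manageable.
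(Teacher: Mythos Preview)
Your proof is correct and follows essentially the same approach as the paper: cofiniteness comes from being a subposet of the cofinite product $\prod A$, while directedness and infinite height are obtained by first finding a suitable element in $\prod A$ and then invoking the preceding lemma to correct it into $\widetilde{A}_F$. The only cosmetic difference is that you spell out the cofiniteness argument in more detail and cite Lemma~\ref{l:eqiv_directed} (which is not really needed here, since for posets directedness reduces to non-emptiness plus upper bounds for pairs; non-emptiness follows immediately from the preceding lemma applied to any element of the non-empty product $\prod A$).
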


\begin{proof}
The poset $\widetilde{A}_F$ is clearly cofinite, being a subposet of $\prod A$. Since $\prod A$ is clearly of infinite height it follows from the previous lemma that $\widetilde{A}_F$ is also of infinite height. To show it is directed, let $(a_0,...,a_n)$ and $(b_0,...,b_n)$ be elements in $\widetilde{A}_F$. Since $\prod A$ is clearly directed, we can find an element $(c'_0,...,c'_n)$ in $\prod A$ such that $(c'_0,...,c'_n) \geq  (a_0,...,a_n),(b_0,...,b_n)$. By the previous lemma, we can find an element $(c_0,...,c_n)$ in $\widetilde{A}_F$ such that $(c_0,...,c_n) \geq  (c'_0,...,c'_n)$.
\end{proof}

From the previous two lemmas we get immediately the following:
\begin{cor}
The inclusion $\widetilde{A}_F \subseteq \prod A$
is cofinal.
\end{cor}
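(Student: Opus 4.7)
The plan is to unfold the definition of cofinality (Definition \ref{d:cofinal}) for the inclusion $\iota:\widetilde{A}_F \hookrightarrow \prod A$ and verify its two conditions directly from the two preceding lemmas. Concretely, I need to show that for every $a = (a_0,\ldots,a_n)$ in $\prod A$, the over category $\iota_{/a}$ is nonempty and connected. Since both sides are posets (viewed as categories with the single morphism $u \to v$ whenever $u \geq v$), an object of $\iota_{/a}$ is simply an element $c \in \widetilde{A}_F$ with $c \geq a$, and a morphism from $c$ to $c'$ in $\iota_{/a}$ is the relation $c \geq c'$ in $\widetilde{A}_F$.

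For nonemptiness of $\iota_{/a}$, I would invoke the lemma immediately preceding Lemma \ref{l:directed2}, which asserts that for every element $(a_0,\ldots,a_n) \in \prod A$ there exists $(c_0,\ldots,c_n) \in \widetilde{A}_F$ with $(c_0,\ldots,c_n) \geq (a_0,\ldots,a_n)$. This element provides an object of $\iota_{/a}$.

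For connectedness, given two objects $c, c' \in \widetilde{A}_F$ with $c \geq a$ and $c' \geq a$, I would use that $\widetilde{A}_F$ is directed (Lemma \ref{l:directed2}) to produce $c'' \in \widetilde{A}_F$ with $c'' \geq c$ and $c'' \geq c'$. By transitivity $c'' \geq a$, so $c''$ is itself an object of $\iota_{/a}$, and the relations $c'' \geq c$ and $c'' \geq c'$ give morphisms $c'' \to c$ and $c'' \to c'$ in $\iota_{/a}$. This yields a zigzag $c \leftarrow c'' \to c'$ connecting any two objects, so $\iota_{/a}$ is connected.

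I do not anticipate a main obstacle: this corollary is a direct consequence of the two preceding lemmas, essentially repackaging ``nonempty and directed'' as ``nonempty and connected over categories''. The only subtle point is remembering the order convention on posets-as-categories fixed in Section \ref{s:prelim} (morphism $u \to v$ iff $u \geq v$), so that being above $a$ is what one needs in the over category $\iota_{/a}$; with that convention in hand, the proof is essentially immediate.
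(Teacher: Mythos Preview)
Your proposal is correct and matches the paper's approach exactly: the paper simply states that the corollary follows immediately from the two preceding lemmas, and your write-up is precisely the unfolding of that claim via Definition~\ref{d:cofinal}. Your care with the poset convention is appropriate, and there is nothing missing.
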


Since it is clear that all the different projections $\prod A \to A_i$ are cofinal, we get that the projections $\widetilde{A}_F \to A_i$ are also cofinal. It follows that the morphism $\psi(F):i(F)\xrightarrow{}i(j_D(g(F)))$ induced by these projections is an isomorphism  (see Lemma ~\ref{l:cofinal}). Thus the corresponding morphism $\phi(F):F\xrightarrow{}j_D(g(F))$ is also an isomorphism.

We now turn to defining the functor $h_D:\barr{\Pro}(\cC)^D\to \barr{\Pro}(\cC^D)$ which will be the inverse equivalence to $j_D$. Recall from Proposition ~\ref{p:ffull} that $j_D$ is full and faithful.

\begin{define}\label{d:h_D}
In Definition ~\ref{d:natural}  we have constructed two class functions: $g:Ob(\barr{\Pro}(\cC)^D)\to Ob(\barr{\Pro}(\cC^D))$ and $\phi:Ob(\barr{\Pro}(\cC)^D)\to Mor(\barr{\Pro}(\cC)^D)$, such that for every object $F$ in $\barr{\Pro}(\cC)^D$ we have that:
$$\phi(F):F\xrightarrow{}j_D(g(F))$$
is an isomorphism in $\barr{\Pro}(\cC)^D$.

We can now apply the construction given in Definition ~\ref{d:inverse} and define the functor $h_D$ to be:
$$h_D:=G_{j_D,g,\phi}:\barr{\Pro}(\cC)^D\to \barr{\Pro}(\cC^D).$$
\end{define}

By Lemma ~\ref{l:inverse}, $h_D$ is an inverse equivalence to $j_D$, so we obtain:

\begin{cor}\label{p:natural}
Let $D$ be a finite strongly loopless category. Then the pair of functors:
$$j_D:\barr{\Pro}(\cC^D) \rightleftarrows  \barr{\Pro}(\cC)^{D}:h_D$$
are inverse equivalences  of categories.
\end{cor}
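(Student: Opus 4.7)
The proof should essentially be a matter of assembling pieces that are already in place. The plan is to invoke the general formal machinery of Lemma~\ref{l:inverse}, which manufactures an inverse equivalence out of a fully faithful functor together with an object-level choice of preimages and isomorphisms. So I need to check that both halves of the hypothesis of Lemma~\ref{l:inverse} are satisfied for the functor $j_D$.

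First, I would recall that by Proposition~\ref{p:ffull} (second part), since $D$ is finite, the functor $j_D:\barr{\Pro}(\cC^D)\to \barr{\Pro}(\cC)^D$ is full and faithful. Second, I need to check that the class functions $g:Ob(\barr{\Pro}(\cC)^D)\to Ob(\barr{\Pro}(\cC^D))$ and $\phi:Ob(\barr{\Pro}(\cC)^D)\to Mor(\barr{\Pro}(\cC)^D)$ constructed in Definition~\ref{d:natural} really provide the extra data demanded by Definition~\ref{d:inverse}. The object assignment $g(F)$ is the diagram $\widetilde{A}_F \to \cC^D$ of Definition~\ref{d:natural}, and its source is a cofinite directed poset of infinite height by Lemma~\ref{l:directed2}, so $g(F)$ is legitimately an object of $\barr{\Pro}(\cC^D)$.

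The key point to verify is that $\phi(F):F\to j_D(g(F))$ is an isomorphism in $\barr{\Pro}(\cC)^D$. By the discussion immediately after Lemma~\ref{l:directed2}, the inclusion $\widetilde{A}_F\hookrightarrow \prod A$ is cofinal, and each projection $\prod A\to A_i$ is cofinal, so the composed projections $\widetilde{A}_F\to A_i$ are cofinal. Hence by Lemma~\ref{l:cofinal} the induced morphism $\psi(F):i(F)\to i(j_D(g(F)))$ in $\Pro(\cC)^D$ is objectwise an isomorphism, i.e.\ an isomorphism in $\Pro(\cC)^D$. Since $i:\barr{\Pro}(\cC)\to\Pro(\cC)$ is an equivalence of categories by Corollary~\ref{c_equiv}, reflecting isomorphisms, the corresponding map $\phi(F):F\to j_D(g(F))$ is an isomorphism in $\barr{\Pro}(\cC)^D$.

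Having verified both hypotheses of Definition~\ref{d:inverse}, Lemma~\ref{l:inverse} directly applies and produces an inverse equivalence $h_D = G_{j_D,g,\phi}:\barr{\Pro}(\cC)^D\to \barr{\Pro}(\cC^D)$ to $j_D$, which is exactly the functor of Definition~\ref{d:h_D}. So the pair $(j_D,h_D)$ are inverse equivalences of categories, as required. There is no real obstacle here since all the genuine work, especially the careful construction of $\widetilde{A}_F$ and the verification that it is cofinite directed of infinite height together with the cofinality of the projections, has already been done in Definition~\ref{d:natural} and Lemma~\ref{l:directed2}; the corollary is simply the formal consequence.
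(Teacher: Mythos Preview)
Your proposal is correct and follows essentially the same approach as the paper: the paper's proof is simply the one-line invocation of Lemma~\ref{l:inverse}, after having already established in Proposition~\ref{p:ffull} that $j_D$ is fully faithful and in the discussion surrounding Definition~\ref{d:natural} and Lemma~\ref{l:directed2} that $\phi(F)$ is an isomorphism. You have merely made the assembly of these ingredients more explicit.
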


\begin{rem}
Since taking $\barr{\Pro}$ and functor categories clearly  preserves equivalences, the natural functor $j_D:\barr{\Pro}(\cC^D) \to \barr{\Pro}(\cC)^{D}$ is also an equivalence when $D$ is a finite loopless category.
\end{rem}

\section{Factorizations in pro categories}\label{s_fact}

Recall Proposition ~\ref{p:fact} from the introduction:
\begin{prop}
Let $\mcal{C}$ be a category that has finite limits, and let $N$ and $M$ be classes of morphisms in $\cC$. Then:
\begin{enumerate}
\item $^{\perp}R(Sp^{\cong}(M))=^{\perp}Sp^{\cong}(M)=^{\perp}M.$
\item If $Mor(\mcal{C}) = M\circ N$ then $Mor(\barr{\Pro}(\mcal{C})) = Sp^{\cong}(M)\circ Lw^{\cong}(N)$.
\item If $Mor(\mcal{C}) = M\circ N$ and $N\perp M$ (in particular, if $(N,M)$ is a weak factorization system in $\cC$), then $(Lw^{\cong}(N),R(Sp^{\cong}(M)))$ is a weak factorization system in $\barr{\Pro}(\cC)$.
\end{enumerate}
\end{prop}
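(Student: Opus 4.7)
The plan is to treat the three claims in succession, with part (2) carrying essentially all the work.

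Part (1) is immediate from the preliminaries. Lemma ~\ref{c:ret_lift} gives ${}^{\perp}R(Sp^{\cong}(M))={}^{\perp}Sp^{\cong}(M)$, and Lemma ~\ref{l:SpMo_is_Mo} gives ${}^{\perp}Sp^{\cong}(M)={}^{\perp}M$. Combining the two yields the claimed chain of equalities.

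For part (2), the plan is to reduce to a Reedy-type factorization. Given $f:X\to Y$ in $\barr{\Pro}(\cC)$, I would first replace $f$ by an isomorphic map represented by a natural transformation $\phi:X\to Y$ in $\cC^T$, where $T$ is a single cofinite directed set of infinite height indexing both $X$ and $Y$. Such a replacement is obtained by transporting along the equivalence $\barr{\Pro}(\cC)\simeq \Pro(\cC)$ of Corollary ~\ref{c_equiv}, invoking the classical fact that any morphism in $\Pro(\cC)$ arises from a natural transformation on a common directed indexing category, and then composing with the construction of Corollary ~\ref{p_onto} to ensure that the index is cofinite directed of infinite height. I would then build the intermediate object $L:T\to \cC$ by induction on the degree function of Definition ~\ref{def_deg}. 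At each $t\in T$ with the previous stages $\{L_s\}_{s<t}$ and compatible maps already produced, apply the hypothesis $Mor(\cC)=M\circ N$ to the canonical map
\[
X_t \longrightarrow Y_t \times_{\lim_{s<t} Y_s} \lim_{s<t} L_s,
\]
obtaining a factorization $X_t \xrightarrow{q_t} L_t \xrightarrow{p_t} Y_t \times_{\lim_{s<t} Y_s} \lim_{s<t} L_s$ with $q_t\in N$ and $p_t\in M$. The transition maps $L_t\to L_s$ for $s<t$ come from $p_t$ composed with projection to $\lim_{s<t}L_s$, and the morphism $L_t\to Y_t$ comes from $p_t$ composed with projection to $Y_t$. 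This produces a functor $L:T\to \cC$ together with a levelwise $N$-map $q:X\to L$ and, by construction of the matching maps, a special $M$-map $p:L\to Y$; so $f=p\circ q$ is the desired factorization in $Sp^{\cong}(M)\circ Lw^{\cong}(N)$. The main obstacle is the reduction step to a natural transformation over a common index in a way compatible with the infinite-height requirement on objects of $\barr{\Pro}(\cC)$; the Reedy-style induction itself is a routine diagram chase.

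For part (3), I would verify the three axioms of a weak factorization system. The factorization axiom is immediate from part (2) together with $Sp^{\cong}(M)\subseteq R(Sp^{\cong}(M))$. For the lifting axioms, part (1) reduces the identity $Lw^{\cong}(N)={}^{\perp}R(Sp^{\cong}(M))$ to $Lw^{\cong}(N)={}^{\perp}M$ (inside $\barr{\Pro}(\cC)$). The forward inclusion is a levelwise lifting argument: given $f=[id,\phi]:X^T\to Y^T$ with $\phi$ levelwise in $N$, and $g:W\to Z$ in $M$ viewed as a map of simple pro-objects, both horizontal maps in a lifting square are represented at some common index $t\in T$ (since $W,Z$ are simple and $T$ is directed), and the hypothesis $N\perp M$ in $\cC$ supplies the required lift at level $t$, which then defines the map of pro-objects. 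The reverse inclusion ${}^{\perp}M\subseteq Lw^{\cong}(N)$, and the dual identity $(Lw^{\cong}(N))^{\perp}=R(Sp^{\cong}(M))$, both follow from the standard retract-of-factorization argument: factor the given map via part (2), use the assumed lifting property to produce a lift against one half of the factorization, and conclude that the given map is a retract of the other half; Lemma ~\ref{l:ret_lw} is then used to absorb retracts inside $Lw^{\cong}(N)$.
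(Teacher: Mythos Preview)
Your proposal is correct and follows essentially the same approach as the paper: part~(1) from the cited lemmas, part~(2) via a Reedy-type inductive factorization after reducing to a natural transformation on a common cofinite directed index, and part~(3) via the standard retract-of-factorization argument together with Lemma~\ref{l:ret_lw}. The only minor differences are that for the reduction step in part~(2) the paper invokes Corollary~\ref{p:natural} (the equivalence $\barr{\Pro}(\cC^{\Delta^1})\simeq\barr{\Pro}(\cC)^{\Delta^1}$) directly rather than detouring through $\Pro(\cC)$ and Corollary~\ref{p_onto}, and in part~(3) it packages your inline lifting and retract arguments into the auxiliary Lemmas~\ref{l_lift_wfs} and~\ref{l:MN_LWSP}.
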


The main purpose of this section is to prove the second and third parts of the above proposition. It is done in Propositions ~\ref{p_f} and ~\ref{p_wfs}.

Throughout this section, let $\cC$ be a category that has finite limits and let $N$ and $M$ be classes of morphisms in $\cC$.
We define $Lw^{\cong}(N)$ and $Sp^{\cong}(M)$ in $\barr{\Pro}(\mcal{\cC})$ exactly as in Definition ~\ref{def mor}.

\subsection{Reedy-type factorizations}\label{ss_reedy}

Assume now that  $M\circ N=Mor(\cC)$. The purpose of this subsection is to describe a construction that produces for every cofinite poset $A$ a factorization of the morphisms in $\cC^A$ into a morphism in $Lw(N)$ followed by a morphism $Sp(M)$ (see Definition ~\ref{def natural}). This is done in Definition ~\ref{d:reedy}. We will call this construction the \emph{Reedy construction}. In particular, it will follow that  $Sp(M)\circ Lw(N)=Mor(\cC^A)$.

In constructing this factorization we will use the following:
\begin{lem}\label{l:extend_factorization}
Let $R$ be a finite poset, and let $f:X\to Y$ be a map in $\mcal{C}^{R^{\lhd}}$. Let $X|_R \xrightarrow{g} H \xrightarrow{h} Y|_R$ be a factorization of $f|_R$, such that $g$ is levelwise $N$ and $h$ is special $M$.
Then all the factorizations of   $f$  of the form $X \xrightarrow{g'} H' \xrightarrow{h'} Y$, such that $g'$ is levelwise $N$, $h'$ is special $M$ and $H'|_R=H,g'|_R = g,h'|_R = h$, are in natural 1-1 correspondence with all factorizations of the map   $X(\infty) \to \lim\limits_R H \times_{\lim\limits_R Y} Y(\infty)$ of the form $X(\infty) \xrightarrow{g''} H'(\infty) \xrightarrow{h''} \lim\limits_R H \times_{\lim\limits_R Y} Y(\infty)$, such that $g''$ is in $N$ and $h'' $ in $M$ (in particular there always exists one, since $M\circ N=Mor(\cC)$).
\end{lem}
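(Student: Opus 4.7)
The plan is a routine unpacking using the universal property of pullbacks. The key observation is that a functor $H':R^{\lhd}\to\cC$ extending $H$ is the same datum as an object $H'(\infty)\in\cC$ together with a cone map $H'(\infty)\to\lim_R H$. Similarly, extending the natural transformations $g$ and $h$ to $g'$ and $h'$ at the new top vertex $\infty$ amounts to specifying morphisms $g'_\infty:X(\infty)\to H'(\infty)$ and $h'_\infty:H'(\infty)\to Y(\infty)$ satisfying the commutativity constraints coming from each morphism $\infty\to r$ in $R^{\lhd}$, together with the identity $h'_\infty\circ g'_\infty=f_\infty$.

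First I would assemble the constraints for $h'$. The naturality of $h'$ at the morphisms $\infty\to r$, collected over all $r\in R$, says exactly that the two composites $H'(\infty)\to\lim_R Y$ --- one through $Y(\infty)$, the other through $\lim_R H$ --- coincide. By the universal property of the pullback, this pair packages $h'_\infty$ together with the cone map $H'(\infty)\to\lim_R H$ into a single morphism
$$h'':H'(\infty)\to Y(\infty)\times_{\lim_R Y}\lim_R H.$$
Next, the analogous compatibility for $g'$, combined with $h'\circ g' = f$, forces the composite $X(\infty)\xrightarrow{g'_\infty} H'(\infty)\xrightarrow{h''}Y(\infty)\times_{\lim_R Y}\lim_R H$ to coincide with the canonical morphism assembled from $f_\infty:X(\infty)\to Y(\infty)$ and $X(\infty)\to\lim_R X\xrightarrow{\lim g}\lim_R H$. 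Setting $g'':=g'_\infty$, this exhibits the forward direction of the bijection.

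The inverse goes in the obvious way: given a factorization $X(\infty)\xrightarrow{g''}H'(\infty)\xrightarrow{h''}Y(\infty)\times_{\lim_R Y}\lim_R H$ of the canonical map, define $g'_\infty:=g''$ and recover $h'_\infty$ and the cone map $H'(\infty)\to\lim_R H$ as the two components of $h''$; all required commutativities are automatic from the pullback universal property and the fact that $h''$ factors the canonical map. The side conditions transfer cleanly: $g'$ being levelwise $N$ reduces at the new vertex to $g''\in N$, and $h'$ being special $M$ reduces at $\infty$ to the requirement that the natural map $H'(\infty)\to Y(\infty)\times_{\lim_{s<\infty}Y(s)}\lim_{s<\infty}H(s) = Y(\infty)\times_{\lim_R Y}\lim_R H$ lie in $M$, i.e.\ that $h''\in M$. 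The parenthetical existence statement then follows at once from the hypothesis $Mor(\cC)=M\circ N$.

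I do not expect a genuine obstacle: the argument is a sequence of direct appeals to universal properties. The only step requiring some bookkeeping is verifying that the assembled $g'$ and $h'$ are natural transformations on all of $R^{\lhd}$ rather than just at $\infty$; but on $R$ they agree with the given data, so the only new naturality squares are those attached to the morphisms $\infty\to r$, and these are precisely what the pullback decomposition encodes. Naturality of the bijection in $f$ is then evident since every step uses only universal constructions.
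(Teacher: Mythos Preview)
Your proposal is correct and follows essentially the same approach as the paper's proof: both identify that the data of an extension $(H',g',h')$ consists of an object $H'(\infty)$, a cone $H'(\infty)\to\lim_R H$, and maps $g'_\infty,h'_\infty$ subject to the evident naturality, factorization, and $N/M$ conditions, and both observe that these repackage via the pullback as a factorization of $X(\infty)\to \lim_R H\times_{\lim_R Y}Y(\infty)$. If anything, you spell out the two directions of the bijection and the transfer of the side conditions more explicitly than the paper, which simply lists the required data and concludes ``From this the lemma follows easily.''
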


\begin{proof}
To define a factorization of   $f$  of the form $X \xrightarrow{g'} H' \xrightarrow{h'} Y$ as above, we need to define:
\begin{enumerate}
\item An object: $H'(\infty)$ in $\cC$.
\item Compatible morphisms: $H'(\infty)\to H(r)$, for every $r$ in $R$ (or in other words, a morphism: $H'(\infty)\to\lim\limits_R H$).
\item A factorization $X(\infty) \xrightarrow{g'_{\infty}} H'(\infty) \xrightarrow{h'_{\infty}} Y(\infty)$ of $f_{\infty}:X(\infty)\to Y(\infty)$, s.t:
    \begin{enumerate}
    \item The resulting $g':X\to H',h':H'\to Y$ are natural transformations (we only need to check that the following diagram commutes:
        $$\xymatrix{
        X(\infty) \ar[d] \ar[r] &  H'(\infty) \ar[d] \ar[r] & Y(\infty) \ar[d] \\
        \lim\limits_R X  \ar[r] & \lim\limits_R H \ar[r] & \lim\limits_R Y).
        }$$
    \item $g':X\to H'$ is levelwise $N$ (we only need to check that $g'_{\infty}$ in $N$).
    \item $h':H'\to Y$ is special $M$ (we only need to check the special condition on $\infty\in{R^{\lhd}}$).
    \end{enumerate}
\end{enumerate}
From this the lemma follows easily.
\end{proof}

\begin{define}\label{d:reedy}\textbf{Reedy construction:}
Let $A$ be a cofinite poset and let $f:C\to D$ be a morphism in $\cC^A$. We will describe a construction that produces a factorization of $f$ in $\cC^A$ of the form: $C\xrightarrow{g} H \xrightarrow{h} D$ such that $h$ is in $Sp(M)$ and $g$ is in $Lw(N)$ (see Definition ~\ref{def natural}). We will call it the \emph{Reedy construction}.
We define this factorization of $f$ recursively.

Let $n\geq 0$. Suppose we have defined a factorization  of $f|_{A^{n-1}}$ in $\cC^{A^{n-1}}$ of the form: $C|_{A^{n-1}}\xrightarrow{g|_{A^{n-1}}} H|_{A^{n-1}} \xrightarrow{h|_{A^{n-1}}} D|_{A^{n-1}}$, such that $h|_{A^{n-1}}$ is in $Sp(M)$ and $g|_{A^{n-1}}$ is in $Lw(N)$ (see Definition ~\ref{def_deg}).

Let $c$ be an element in $A^n\setminus A^{n-1}$.

$A^{n-1}_c$ is a finite poset, and $f|_{A_c}:C|_{A_c}\to D|_{A_c}$ is a map in $\mcal{C}^{A_c}$ (see Definition ~\ref{def CDS}). We know that $C|_{A^{n-1}_c}\xrightarrow{g|_{A^{n-1}_c}} H|_{A^{n-1}_c} \xrightarrow{h|_{A^{n-1}_c}} D|_{A^{n-1}_c}$ is a factorization of $f|_{A^{n-1}_c}$, such that $g|_{A^{n-1}_c}$ is levelwise $N$ and $h|_{A^{n-1}_c}$ is special $M$.

Note that $A_c$ is naturally isomorphic to $(A^{n-1}_c)^{\lhd}$.
Thus, by Lemma ~\ref{l:extend_factorization}, every factorization of the map   $C(c) \to \lim\limits_{A^{n-1}_c} H \times_{\lim\limits_{A^{n-1}_c} D} D(c)$
into a map in $N$ followed by a map in $M$ gives rise naturally to a factorization of   $f|_{A_c}$  of the form $C|_{A_c}\xrightarrow{g|_{A_c}} H|_{A_c} \xrightarrow{h|_{A_c}} D_{A_c}$ such that $g|_{A_c}$ is levelwise $N$ and $h|_{A_c}$ is special $M$, extending the recursively given factorization. Choose such a factorization of  $C(c) \to \lim\limits_{A^{n-1}_c} H \times_{\lim\limits_{A^{n-1}_c} D} D(c)$, and combine all the resulting factorizations  of   $f|_{A_c}$  for different $c$ in $A^n\setminus A^{n-1}$ to obtain the recursive step.
\end{define}
\subsection{Factorizations in pro-categories}\label{ss_fact}

The purpose of this subsection is to prove the rest of Proposition ~\ref{p:fact} not proven in Lemma ~\ref{l:SpMo_is_Mo}.

\begin{prop}\label{p_f}
If $Mor(\mcal{C}) = M\circ N$ then $Mor(\barr{\Pro}(\mcal{C})) = Sp^{\cong}(M)\circ Lw^{\cong}(N)$.
\end{prop}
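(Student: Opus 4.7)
The strategy will be to reduce the factorization to an application of the Reedy construction (Definition ~\ref{d:reedy}), which factors any natural transformation between diagrams on a single cofinite directed poset. The difficulty is that a morphism $f:X\to Y$ in $\barr{\Pro}(\cC)$ is represented by a 1-morphism $(\alpha_f,\phi_f):X^A\to Y^B$ whose source and target are indexed differently, so $f$ itself is not literally a natural transformation on any one poset. The plan is to first move $f$, up to isomorphism in $\barr{\Pro}(\cC)$, onto a common index on which it becomes an honest natural transformation, and then Reedy-factor it there.

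For the common index I would set
\[
T \;:=\; \{(a,b)\in A\times B \,|\, \alpha_f(b)\leq a\},
\]
with the order induced from the product. One checks that $T$ is cofinite (since $T_{(a,b)}\subseteq A_a\times B_b$), directed (given two elements, pick $b$ above both second coordinates in $B$, then $a$ above both first coordinates and $\alpha_f(b)$ in $A$) and of infinite height (iterate the same move). Both projections $\pi_A:T\to A$ and $\pi_B:T\to B$ are then cofinal in the sense of Definition ~\ref{d:cofinal}: nonemptiness of their over-categories uses the same construction as directedness of $T$, and connectedness of these over-categories follows from $T$ being directed. Hence by Lemma ~\ref{l:cofinal} together with Corollary ~\ref{c_equiv}, the canonical reindexing maps $X\to \pi_A^*X$ and $Y\to \pi_B^*Y$ are isomorphisms in $\barr{\Pro}(\cC)$.

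Next I would define a natural transformation $\widetilde\phi:\pi_A^*X\to \pi_B^*Y$ in $\cC^T$ by
\[
\widetilde\phi_{(a,b)} \;:=\; \phi_{f,b}\circ X(a\to\alpha_f(b)):X(a)\to Y(b),
\]
which is well defined because $a\geq\alpha_f(b)$ by the definition of $T$; naturality is a short diagram chase using naturality of $\phi_f$ and functoriality of $X$ and $Y$. Comparing representing morphisms coordinate by coordinate in the defining colimit for $\Hom_{\Pro(\cC)}$, one sees that under the two isomorphisms of the previous paragraph $f$ is identified with $[id,\widetilde\phi]$ in $\barr{\Pro}(\cC)$.

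Finally, I would apply the Reedy construction to $\widetilde\phi$ on the cofinite poset $T$, obtaining a factorization $\pi_A^*X\xrightarrow{g} H\xrightarrow{h}\pi_B^*Y$ in $\cC^T$ with $g\in Lw(N)$ and $h\in Sp(M)$. The induced morphisms $[id,g]$ and $[id,h]$ in $\barr{\Pro}(\cC)$ lie in $Lw^{\cong}(N)$ and $Sp^{\cong}(M)$ respectively by definition, and precomposing $[id,g]$ with the iso $X\cong\pi_A^*X$ and postcomposing $[id,h]$ with the iso $\pi_B^*Y\cong Y$ yields the required factorization of $f$, since both $Lw^{\cong}(N)$ and $Sp^{\cong}(M)$ are stable under isomorphism of morphisms. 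The main obstacle is essentially packaged into the construction and verification of $T$ and its projections; once those properties are in hand the rest of the argument is a direct application of Reedy together with the equivalence of Corollary ~\ref{c_equiv}.
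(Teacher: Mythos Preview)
Your argument is correct and follows the same path as the paper's. The paper simply invokes Corollary~\ref{p:natural} for $D=\Delta^1$, whose underlying construction (Definition~\ref{d:natural}) produces exactly your poset $T=\widetilde{A}_F$ and your natural transformation $\widetilde\phi$, after which both proofs apply the Reedy construction of Definition~\ref{d:reedy}.
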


\begin{proof}
Let $f:X\to Y$ be a morphism in $\barr{\Pro}(\mcal{C})$. By Proposition ~\ref{p:natural}, there exists a cofinite directed set $A$ of infinite height and a morphism $f':X'\to Y'$ in $\cC^A$, that is isomorphic to $f$ as a morphism in $\barr{\Pro}(\mcal{C})$. Applying the Reedy construction (see Definition ~\ref{d:reedy}) to $f'$, and composing with the above isomorphisms, we obtain a factorization of $f$ in $\barr{\Pro}(\cC)$ into a morphism in $Lw^{\cong}(N)$ followed by a morphism in $Sp^{\cong}(M)$.
\end{proof}

Our aim now is to prove that if $(N,M)$ is a weak factorization system in $\cC$, then $(Lw^{\cong}(N),R(Sp^{\cong}(M)))$ is a weak factorization system in $\barr{\Pro}(\cC)$. For this we will need the following:

\begin{lem}\label{l_lift}
Assume $Mor(\cC)=M\circ N$. Then:
\begin{enumerate}
\item  $N^{\perp}\subseteq R(M)$.
\item  $^{\perp}M\subseteq R(N)$.
\end{enumerate}
\end{lem}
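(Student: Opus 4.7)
The plan is to apply the classical retract argument, once in each direction. The hypothesis $\text{Mor}(\cC) = M \circ N$ gives, for any morphism $f \colon X \to Y$, a factorization $f = p \circ q$ with $q \in N$ and $p \in M$; the lifting hypothesis then forces $f$ to be a retract of whichever of $p, q$ lies in the ``opposite'' class.

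For part (1), suppose $f \in N^{\perp}$ and factor $f = p \circ q$ with $q \colon X \to Z$ in $N$ and $p \colon Z \to Y$ in $M$. I would then consider the commutative square
$$\xymatrix{X \ar@{=}[r] \ar[d]_q & X \ar[d]^f \\ Z \ar[r]_p & Y}$$
which commutes because $p \circ q = f$. Since $q \in N$ and $f \in N^{\perp}$, a lift $s \colon Z \to X$ exists with $s \circ q = \mathrm{id}_X$ and $f \circ s = p$. These identities are exactly the data needed to exhibit $f$ as a retract of $p$, via the diagram
$$\xymatrix{X \ar@{=}[r] \ar[d]_f & X \ar[r]^q \ar[d]^p & X \ar[d]^f \\ Y \ar@{=}[r] & Y \ar@{=}[r] & Y}$$
where the top row composes to $s \circ q = \mathrm{id}_X$ (reading instead as $X \xrightarrow{q} Z \xrightarrow{s} X$) and the bottom row to $\mathrm{id}_Y$. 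Hence $f \in R(M)$.

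Part (2) is formally dual. Given $f \in {}^{\perp}M$, factor $f = p \circ q$ as above, and consider the square
$$\xymatrix{X \ar[r]^q \ar[d]_f & Z \ar[d]^p \\ Y \ar@{=}[r] & Y}$$
which commutes since $p \circ q = f$. The lifting hypothesis $f \in {}^{\perp}M$ applied to $p \in M$ produces $s \colon Y \to Z$ with $s \circ f = q$ and $p \circ s = \mathrm{id}_Y$. These identities exhibit $f$ as a retract of $q$ via
$$\xymatrix{X \ar@{=}[r] \ar[d]_f & X \ar@{=}[r] \ar[d]^q & X \ar[d]^f \\ Y \ar[r]_s & Z \ar[r]_p & Y,}$$
so $f \in R(N)$.

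There is essentially no obstacle here: both parts are instances of the same classical retract argument, and the only care needed is to set up the lifting squares so that the correct half of the hypothesis $M \circ N$-factorization matches the lifting data we have. No subtle points specific to pro-categories enter, since the statement is a purely categorical fact about a class of morphisms and a factorization, applied to $\cC$ itself.
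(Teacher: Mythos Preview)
Your argument is correct and is exactly the classical retract argument used in the paper; the paper proves (1) in the same way and dismisses (2) as dual. One presentational slip: in your first retract diagram the middle column should read $Z$ (with vertical arrow $p$) rather than $X$, as you yourself note parenthetically---fix the diagram to match your text.
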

\begin{proof}
We prove (1) and the proof of (2) is dual.
Let $h:A \to B $ in $N^{\perp}$. We can factor $h$ as:
$$A \xrightarrow{g\in N} C \xrightarrow{f\in M} B.$$
We get the commutative diagram:
$$\xymatrix{
 A \ar[d]_{g\in N}\ar@{=}[r]& A \ar[d]^{h\in N^{\perp}}\\
 C \ar[r]_{f} \ar@{..>}[ru]^{k} & B}$$
where the existence of $k$ is clear. Rearranging, we get:
$$\xymatrix{
 A \ar[dr]^h \ar[r]^g  & C \ar[d]^{f } \ar[r]^k  & A \ar[dl]^h\\
 \empty &  B & \empty,}$$
and we see that $h$ is a retract of $f$ in $M$.
\end{proof}

\begin{lem}\label{l_lift_wfs}
Assume $Mor(\cC)=M\circ N$ and $N \perp M$. Then $(R(N),R(M))$ is a weak factorization system in $\cC$
\end{lem}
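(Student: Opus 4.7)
The plan is to verify the three defining properties of a weak factorization system for the pair $(R(N), R(M))$, relying entirely on the preceding lemmas: Lemma \ref{l_lift} (which requires only the existence of factorizations) and Lemma \ref{c:ret_lift} (the retract-closure identities for lifting classes).

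\textbf{Factorization.} Since $N \subseteq R(N)$ and $M \subseteq R(M)$, the hypothesis $Mor(\cC) = M \circ N$ immediately gives $Mor(\cC) = R(M) \circ R(N)$, so the first axiom is automatic.

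\textbf{The lifting identities.} The core of the argument is to prove $R(N) = {}^{\perp}R(M)$ and $(R(N))^{\perp} = R(M)$ by a short chain of inclusions. Start from $N \perp M$, which is equivalent to $N \subseteq {}^{\perp}M$ and $M \subseteq N^{\perp}$. Lemma \ref{c:ret_lift} tells us that lifting classes are closed under retracts, i.e.\ $R({}^{\perp}M) = {}^{\perp}M$ and $R(N^{\perp}) = N^{\perp}$; taking retract closures of the two displayed inclusions therefore gives $R(N) \subseteq {}^{\perp}M$ and $R(M) \subseteq N^{\perp}$. In the reverse direction, Lemma \ref{l_lift}, applied to the factorization hypothesis $Mor(\cC) = M \circ N$, gives $N^{\perp} \subseteq R(M)$ and ${}^{\perp}M \subseteq R(N)$. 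Combining the two pairs of inclusions yields the equalities
\[
R(N) = {}^{\perp}M, \qquad R(M) = N^{\perp}.
\]

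\textbf{Conclusion.} To conclude, apply Lemma \ref{c:ret_lift} one last time to replace $M$ and $N$ by their retract closures inside the lifting operations: $(R(N))^{\perp} = N^{\perp} = R(M)$ and ${}^{\perp}(R(M)) = {}^{\perp}M = R(N)$. This gives the remaining two axioms $(R(N))^{\perp} = R(M)$ and $R(N) = {}^{\perp}(R(M))$, finishing the verification. There is no real obstacle here; the only care needed is to track which inclusion comes from which lemma and to notice that $R(N) = {}^{\perp}M$ and $R(M) = N^{\perp}$ are established \emph{before} invoking Lemma \ref{c:ret_lift} to upgrade them to the lifting classes of $R(M)$ and $R(N)$.
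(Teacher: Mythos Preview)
Your proof is correct and follows essentially the same route as the paper: both arguments establish the equalities $R(N) = {}^{\perp}M$ and $R(M) = N^{\perp}$ by combining the retract-closure of lifting classes (Lemma~\ref{c:ret_lift}) with the inclusions from Lemma~\ref{l_lift}, and then invoke Lemma~\ref{c:ret_lift} once more to pass to ${}^{\perp}(R(M))$ and $(R(N))^{\perp}$. Your version is slightly more explicit (you spell out the factorization axiom and separate each inclusion), but the underlying argument is identical.
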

\begin{proof}
$^{\perp}M$ and $N^{\perp}$ are clearly closed under retracts, so by Lemma ~\ref{l_lift} we get that: $R(N) \subseteq ^{\perp}M \subseteq R(N)$ and $R(M)\subseteq N^{\perp} \subseteq R(M)$. Now the lemma follows from Lemma ~\ref{c:ret_lift}.
\end{proof}

\begin{lem}\label{l:MN_LWSP}
Assume $N \perp M$. Then $Lw^{\cong}(N) \perp Sp^{\cong}(M) $.
\end{lem}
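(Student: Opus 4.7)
The plan is to reduce the statement, via the equivalence $i : \bar{\Pro}(\cC)\xrightarrow{\simeq}\Pro(\cC)$ of Corollary~\ref{c_equiv}, to a lifting question in $\Pro(\cC)$, and then to pull the problem back down to a single commutative square in $\cC$ by exploiting the colimit description of morphisms into a simple object. Since lifting classes are stable under equivalences of categories and are clearly stable under isomorphism of either component, it is enough to show that a morphism $f:X\to Y$ in $\Pro(\cC)$ represented by a genuine natural transformation $\phi:X\to Y$ in $\cC^I$ that is levelwise in $N$ has the left lifting property against every morphism $g:Z\to W$ in $Sp^{\cong}(M)$.

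At this point I would invoke Lemma~\ref{l:SpMo_is_Mo}, which gives $^{\perp}Sp^{\cong}(M)={}^{\perp}M$. Consequently it suffices to verify $f\perp h$ for every morphism $h:Z\to W$ of $M$, regarded as a morphism between simple objects of $\Pro(\cC)$. So the real content is: a levelwise $N$-natural transformation has the left lifting property against simple $M$-maps.

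To prove this, consider a commutative square in $\Pro(\cC)$
\[
\xymatrix{X \ar[r]^u \ar[d]_f & Z \ar[d]^h \\ Y \ar[r]_v & W.}
\]
Because $Z$ and $W$ are simple, $u$ is represented by some $u_a:X_a\to Z$ and $v$ by some $v_b:Y_b\to W$. The commutativity of the outer square means that $h\circ u$ and $v\circ f$ define the same element of $\colim_{i\in I}\Hom_{\cC}(X_i,W)$, so after passing to a common index $a'\in I$ with $a'\geq a$ and $a'\geq b$ (possible by directedness of $I$) and refining once more if necessary to equalize the two representatives, we obtain an honest commutative square in $\cC$
\[
\xymatrix{X_{a'} \ar[r]^{\bar u} \ar[d]_{\phi_{a'}} & Z \ar[d]^h \\ Y_{a'} \ar[r]_{\bar v} & W,}
\]
with $\phi_{a'}\in N$ and $h\in M$. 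The hypothesis $N\perp M$ yields a diagonal filler $\ell:Y_{a'}\to Z$ in $\cC$, which represents a morphism $\tilde\ell : Y\to Z$ in $\Pro(\cC)$.

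It remains to check that $\tilde\ell$ makes both triangles commute. The lower triangle $h\circ\tilde\ell=v$ is immediate since at the representing index $h\circ\ell=\bar v$ represents $v$. For the upper triangle $\tilde\ell\circ f=u$, both sides are morphisms from $X$ to the simple object $Z$, so equality can be tested at a single representing index: at level $a'$ we have $\ell\circ\phi_{a'}=\bar u$, which is exactly a representative of $u$. The only real subtlety in the argument is the refinement step used to turn the square in $\Pro(\cC)$ into a square in $\cC$, and this is routine directedness of the indexing category — so I do not expect any serious obstacle. Finally, transporting the conclusion back through $i$ gives $Lw^{\cong}(N)\perp Sp^{\cong}(M)$ in $\bar{\Pro}(\cC)$, as required.
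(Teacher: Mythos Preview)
Your proof is correct and follows essentially the same route as the paper: reduce via Lemma~\ref{l:SpMo_is_Mo} to lifting against a single $M$-map between simple objects, replace $f$ by an actual levelwise-$N$ natural transformation, and then refine to a single index where the square lives in $\cC$ and $N\perp M$ applies. The only cosmetic difference is that you pass through the equivalence $i$ to $\Pro(\cC)$ and do the index-refinement there, whereas the paper stays in $\barr{\Pro}(\cC)$ and phrases the same refinement as ``the square factors through $X_t\to Y_t$ for some $t$''; neither detour is necessary for the other, and the content is identical.
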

\begin{proof}
Let $f:X\to Y$ be a map in $Lw^{\cong}(N)$. We want to show that $f$ is in $ {}^\perp Sp^{\cong}(M)$. But ${}^\perp Sp^{\cong}(M) =  {}^\perp M$ by Lemma ~\ref{l:SpMo_is_Mo}, so it is enough to show that there exists a lift in every square in $\barr{\Pro}(\cC)$ of the form:
$$\xymatrix{
 X \ar[d]_{f}\ar[r]& A \ar[d]^{M}\\
 Y \ar[r]     & B.}$$
Without loss of generality, we may assume that $f:X\to Y$ is a natural transformation, which is  a levelwise $N$-map. Thus we have a diagram of the form:
$$\xymatrix{
\{X_t\}_{t\in T} \ar[d]_{f}\ar[r]& A \ar[d]^{{M}}\\
 \{Y_t\}_{t\in T} \ar[r]    &  B.}$$
By the definition of morphisms in $\barr{\Pro}(\cC)$, there exists $t $ in $T$ such that the above square factors as:
$$
\xymatrix{
\{X_t\}_{t\in T} \ar[d]_{f}\ar[r]& X_t \ar[d]^{N}\ar[r]& A \ar[d]^{M}\\
\{Y_t\}_{t\in T}      \ar[r]     & Y_t \ar[r]     &  B.}$$
Since $N\perp M$ we have a lift in the right square of the above diagram, and so a lift in the original square as desired.
\end{proof}

\begin{prop}\label{p_wfs}
If $Mor(\mcal{C}) = M\circ N$ and $N  \perp M$, then $(Lw^{\cong}(N),R(Sp^{\cong}(M)))$ is a weak factorization system in $\barr{\Pro}(\cC)$.
\end{prop}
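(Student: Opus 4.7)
The plan is to verify the three axioms of a weak factorization system for the pair $(Lw^{\cong}(N),R(Sp^{\cong}(M)))$ in $\barr{\Pro}(\cC)$, leveraging the previously established Proposition~\ref{p_f} together with the lifting property in Lemma~\ref{l:MN_LWSP} via the standard retract argument.

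First, I would dispose of the factorization axiom. By Proposition~\ref{p_f} we already have $Mor(\barr{\Pro}(\cC))=Sp^{\cong}(M)\circ Lw^{\cong}(N)$, and since trivially $Sp^{\cong}(M)\subseteq R(Sp^{\cong}(M))$, this immediately gives $Mor(\barr{\Pro}(\cC))=R(Sp^{\cong}(M))\circ Lw^{\cong}(N)$.

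Next, I would establish the lifting containment $Lw^{\cong}(N)\perp R(Sp^{\cong}(M))$. Lemma~\ref{l:MN_LWSP} gives $Lw^{\cong}(N)\perp Sp^{\cong}(M)$, and applying Lemma~\ref{c:ret_lift} (the fact that lifting classes are closed under retracts on either side) upgrades this to $Lw^{\cong}(N)\perp R(Sp^{\cong}(M))$. Equivalently, $Lw^{\cong}(N)\subseteq{}^{\perp}R(Sp^{\cong}(M))$ and $R(Sp^{\cong}(M))\subseteq Lw^{\cong}(N)^{\perp}$.

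The heart of the proof is the reverse containments, which follow by the classical retract argument. For the first, let $f\in {}^{\perp}R(Sp^{\cong}(M))$. Factor $f:X\to Y$ using Proposition~\ref{p_f} as $X\xrightarrow{q} Z\xrightarrow{p} Y$ with $q\in Lw^{\cong}(N)$ and $p\in Sp^{\cong}(M)\subseteq R(Sp^{\cong}(M))$. The square
$$
\xymatrix{
X \ar[d]_{f}\ar[r]^{q}& Z \ar[d]^{p}\\
Y \ar@{=}[r]\ar@{..>}[ur]^{k}     &  Y
}
$$
admits a lift $k$ by hypothesis, and the usual diagram exhibits $f$ as a retract of $q$. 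Since $q\in Lw^{\cong}(N)$ and $R(Lw^{\cong}(N))=Lw^{\cong}(N)$ by Lemma~\ref{l:ret_lw}, we conclude $f\in Lw^{\cong}(N)$. For the dual direction, let $g\in Lw^{\cong}(N)^{\perp}$. Factor $g$ in the same way as $q'\in Lw^{\cong}(N)$ followed by $p'\in Sp^{\cong}(M)$; the lift in
$$
\xymatrix{
X \ar[d]_{q'}\ar@{=}[r]& X \ar[d]^{g}\\
Z \ar[r]_{p'}\ar@{..>}[ur]     &  Y
}
$$
exhibits $g$ as a retract of $p'\in Sp^{\cong}(M)$, so $g\in R(Sp^{\cong}(M))$.

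The only subtlety is checking that the standard retract argument genuinely applies in $\barr{\Pro}(\cC)$, but since this argument uses only the factorization from Proposition~\ref{p_f}, the lifting property $Lw^{\cong}(N)\perp R(Sp^{\cong}(M))$, and the closure of $Lw^{\cong}(N)$ under retracts, and all of these are already in hand, there is no real obstacle. The proof is essentially a compilation of the lemmas proved so far.
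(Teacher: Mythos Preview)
Your proof is correct and follows essentially the same approach as the paper. The only difference is organizational: the paper packages the retract argument into the abstract Lemma~\ref{l_lift_wfs} (applied with $Lw^{\cong}(N)$ and $Sp^{\cong}(M)$ in place of $N$ and $M$) and then invokes Lemma~\ref{l:ret_lw} to strip off the outer $R(-)$, whereas you spell out that retract argument directly; the underlying logic is identical.
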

\begin{proof}
$Mor(\mcal{C}) = M\circ N$ so by Proposition ~\ref{p_f} we have: $Mor(\barr{\Pro}(\mcal{C})) = Sp^{\cong}(M)\circ Lw^{\cong}(N)$.
Thus by Lemmas ~\ref{l_lift_wfs} and ~\ref{l:MN_LWSP} we have: $(R(Lw^{\cong}(N)),R(Sp^{\cong}(M)))$ is a weak factorization system in $\barr{\Pro}(\cC)$.
But by Lemma ~\ref{l:ret_lw}, $R(Lw^{\cong}(N)) = Lw^{\cong}(N)$,
which completes our proof.
\end{proof}

\section{Functorial factorizations in pro categories}\label{s_ff}
The purpose of this section is to prove Theorem ~\ref{t_main}.  It is done in Theorem ~\ref{t_fact} and Corollary ~\ref{c_wfs}.

Throughout this section, let $\cC$ be a category that has finite limits and let $N$ and $M$ be classes of morphisms in $\cC$. We begin with some definitions:

\begin{define}
For any $n\geq 0$ let $\Delta^n$ denote the linear poset: $\{0,...,n\}$, considered as a category with a unique morphism $i\to j$ for any $i\leq j$.
\end{define}

\begin{define}\label{d_ff}
Let $\cD$ be a category.

A functorial factorization in $\cD$  is a section to the composition functor: $\circ:\cD^{\Delta^2}\to \cD^{\Delta^1}$ (which is the pullback to the inclusion: ${\Delta^1}\cong\Delta^{\{0,2\}}\hookrightarrow{\Delta^2}$).

Thus a functorial factorization in $\cD$ consists of a functor:
$\cD^{\Delta^1}\to\cD^{\Delta^2}$ denoted:
$$(X\xrightarrow{f}Y)\mapsto ({X}\xrightarrow{q_f} L_{f}\xrightarrow{p_f}Y)$$
such that:
\begin{enumerate}
\item For any morphism $f$ in $\cD$ we have: $f=p_f\circ q_f$.
\item For any morphism:
$$\xymatrix{{X}\ar[r]^{f}\ar[d]_{l} & {Y}\ar[d]^{k}\\
            {Z} \ar[r]^t   &   {W}}$$
in $\cD^{\Delta^1}$ the corresponding morphism in $\cD^{\Delta^2}$ is of the form
$$\xymatrix{{X}\ar[r]^{q_f}\ar[d]_{l} & L_{f}\ar[r]^{h_f}\ar[d]^{L_{(l,k)}} & {Y}\ar[d]^{k}\\
            {Z} \ar[r]^{q_t} & L_t\ar[r]^{p_t}    &   {W}.}$$
\end{enumerate}

Suppose $\cA$ and $\cB$ are classes of morphisms in $\cD$. The above functorial factorization is said to be into a morphism in $\cA$ followed by a morphism in $\cB$, if for every $f$ in $Mor(\cC)$ we have $q_f$ in $\cA,p_f$ in $\cB$.

\end{define}

\begin{rem}
The definition above of a functorial factorization agrees with the one given in \cite{Rie}. It is slightly stronger than the one given in \cite{Hov} Definition 1.1.1.
\end{rem}

For technical reasons we will also consider the following weaker notion:
\begin{define}\label{d_wff}
Let $\cD$ be a category.

A pseudo-functorial factorization in $\cD$ is a section, up to a natural isomorphism, to the composition functor: $\circ:\cD^{\Delta^2}\to \cD^{\Delta^1}$.

If $\cA$ and $\cB$ are classes of morphisms in $\cD$, we can define a pseudo-functorial factorization into a morphism in $\cA$ followed by a morphism in $\cB$, in the same way as in Definition ~\ref{d_ff}.
\end{define}

\begin{lem}
To any pseudo-functorial factorization in $\cD$ there exists a functorial factorization in $\cD$ isomorphic to it.
\end{lem}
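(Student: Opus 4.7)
The plan is to rigidify the pseudo-functorial factorization by pre- and post-composing with the components of the given natural isomorphism. Let $(F,\eta)$ be a pseudo-functorial factorization, where $F:\cD^{\Delta^1}\to\cD^{\Delta^2}$ is a functor and $\eta:\circ\circ F\xrightarrow{\cong}\mathrm{id}_{\cD^{\Delta^1}}$ is a natural isomorphism. For each object $f:X\to Y$ in $\cD^{\Delta^1}$, write $F(f)$ as $X'\xrightarrow{q_f}L_f\xrightarrow{p_f}Y'$, and the component $\eta_f:p_fq_f\xrightarrow{\cong}f$ as an isomorphism in $\cD^{\Delta^1}$ given by a pair $(\eta_f^0:X'\xrightarrow{\cong}X,\;\eta_f^1:Y'\xrightarrow{\cong}Y)$ fitting into the expected square. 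I would then define a new functor $F':\cD^{\Delta^1}\to\cD^{\Delta^2}$ on objects by
$$F'(f)\;=\;\Bigl(X\xrightarrow{\,q_f\circ(\eta_f^0)^{-1}\,}L_f\xrightarrow{\,\eta_f^1\circ p_f\,}Y\Bigr),$$
and on a morphism $(l,k):f\to t$ in $\cD^{\Delta^1}$ by taking $F'(l,k)$ to be the triple $(l,\,L_{(l,k)},\,k)$, where $L_{(l,k)}:L_f\to L_t$ is the middle component of $F(l,k)$.

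The key verification is that $F'(l,k)$ really is a morphism in $\cD^{\Delta^2}$, i.e.\ the two squares commute. This follows from the naturality of $\eta$: the induced left and right components $\eta^0, \eta^1$ of $\eta$ satisfy $\eta_t^0\circ L_{(l,k),0}=l\circ\eta_f^0$ and $\eta_t^1\circ L_{(l,k),1}=k\circ\eta_f^1$, where $L_{(l,k),0}$ and $L_{(l,k),1}$ are the outer components of $F(l,k)$. Combined with the fact that $F(l,k)$ itself is a morphism in $\cD^{\Delta^2}$ (hence commutes with the $q$'s and $p$'s), a short diagram chase yields commutativity of the two squares defining $F'(l,k)$. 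Functoriality of $F'$ is immediate from functoriality of $F$, and by construction $(\eta_f^1\circ p_f)\circ(q_f\circ(\eta_f^0)^{-1})=f$, so $\circ\circ F'=\mathrm{id}$ on the nose.

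Finally, I would exhibit the natural isomorphism $F\cong F'$ by taking, for each $f$, the 2-simplex isomorphism with components $(\eta_f^0,\mathrm{id}_{L_f},\eta_f^1)$; the two required squares commute directly from the definition of $F'(f)$, and naturality in $f$ reduces again to naturality of $\eta$. If the pseudo-functorial factorization is into a morphism in $\cA$ followed by a morphism in $\cB$, with $\cA$ and $\cB$ closed under isomorphism in the arrow category (which is automatic in all intended applications, since we will apply this to classes such as $Lw^\cong(N)$ and $Sp^\cong(M)$), then $q'_f=q_f\circ(\eta_f^0)^{-1}\in\cA$ and $p'_f=\eta_f^1\circ p_f\in\cB$, so the rigidified factorization retains the factoring property. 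The only genuinely delicate point is bookkeeping the naturality squares correctly; no deep ingredient is needed.
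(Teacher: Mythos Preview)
Your proposal is correct and follows essentially the same approach as the paper: both rigidify by keeping the middle object $L_f$ and conjugating the two factors by the components of the given natural isomorphism, then exhibit the isomorphism $F\cong F'$ via $(\eta_f^0,\mathrm{id}_{L_f},\eta_f^1)$. The only cosmetic difference is that the paper orients the natural isomorphism as $i_f:f\xrightarrow{\cong}p_fq_f$ rather than your $\eta_f:p_fq_f\xrightarrow{\cong}f$, and you spell out the action on morphisms and the naturality diagram chase more explicitly than the paper does.
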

\begin{proof}
Let:
$$(X\xrightarrow{f}Y)\mapsto (\barr{X}\xrightarrow{q_f} L_{f}\xrightarrow{p_f}\barr{Y})$$
be a pseudo-functorial factorization in $\cD$.

There is a natural isomorphism between the identity and the composition of the above factorization with the composition functor. Thus, for any morphism $f$ in $\cD$ we have an isomorphism: $i_f : f\xrightarrow{\cong}p_f\circ q_f$ in $\cD^{\Delta^1}$ denoted:
$$\xymatrix{{X}\ar[r]^{(i_f)_0}\ar[d]_{f} & \barr{X}\ar[d]^{p_f\circ q_f}\\
            {Y} \ar[r]^{(i_f)_1}   &   \barr{Y},}$$
such that for any morphism:
$$\xymatrix{{X}\ar[r]^{f}\ar[d]_{} & {Y}\ar[d]^{}\\
            {Z} \ar[r]^t   &   {W},}$$
in $\cD^{\Delta^1}$ the following diagram commutes:
$$\xymatrix{  & X\ar[dl]^{(i_f)_0}\ar[dd]\ar[rr]^f   &   &   Y\ar[dl]^{(i_f)_1}\ar[dd]\\
          \barr{X}\ar[dd]\ar[rr]^{p_f\circ q_f} &  & \barr{Y}\ar[dd] &           \\
            & Z\ar[dl]^{(i_t)_0}\ar[rr]^t   &   &   W \ar[dl]^{(i_t)_1}\\
          \barr{Z}\ar[rr]^{p_t\circ q_t} &  & \barr{W} & }$$

We define a functorial factorization in $\cD$ by:
$$(X\xrightarrow{f}Y)\mapsto ({X}\xrightarrow{q_f\circ (i_f)_0} L_{f}\xrightarrow{(i_f)_1^{-1}\circ p_f}{Y}).$$
For any morphism $f$ in $\cD$ we have a commutative diagram:
$$\xymatrix{{X}\ar[r]^{q_f\circ (i_f)_0}\ar[d]^{(i_f)_0}_{\cong} & L_{f}\ar[r]^{(i_f)_1^{-1}\circ p_f}\ar[d]_{=} & {Y}\ar[d]^{(i_f)_1}_{\cong}\\
           \barr{X}\ar[r]^{q_f} & L_{f}\ar[r]^{p_f} & \barr{Y},}$$
so the proof is complete.
\end{proof}

\begin{cor}\label{c_wff_to_ff}
Let $\cA$ and $\cB$ be classes of morphisms in $\cD$ that are invariant under isomorphisms. If there exists a pseudo-functorial factorization in $\cD$ into a morphism in $\cA$ followed by a morphism in $\cB$, then there exists a functorial factorization in $\cD$ into a morphism in $\cA$ followed by a morphism in $\cB$.
\end{cor}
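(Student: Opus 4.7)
The plan is to deduce this immediately from the preceding lemma, which produces from any pseudo-functorial factorization a genuine functorial factorization whose two factors are $q_f \circ (i_f)_0$ and $(i_f)_1^{-1} \circ p_f$, where $i_f$ is the natural isomorphism between $f$ and $p_f \circ q_f$ in $\cD^{\Delta^1}$. So I would just verify that if the original pseudo-factorization had $q_f \in \cA$ and $p_f \in \cB$, then the new factors still lie in $\cA$ and $\cB$ respectively.

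For this, I would observe that for each morphism $f \colon X \to Y$ in $\cD$, the morphism $q_f \circ (i_f)_0 \colon X \to L_f$ fits into the commutative square
$$
\xymatrix{
X \ar[r]^-{q_f \circ (i_f)_0} \ar[d]_-{(i_f)_0}^-{\cong} & L_f \ar@{=}[d] \\
\barr{X} \ar[r]^-{q_f} & L_f,
}
$$
in which the vertical maps are isomorphisms of $\cD$. This square exhibits an isomorphism in the arrow category $\cD^{\Delta^1}$ between the object $q_f \circ (i_f)_0$ and the object $q_f$. Since $\cA$ is invariant under isomorphisms (in $\cD^{\Delta^1}$) and $q_f \in \cA$, we conclude $q_f \circ (i_f)_0 \in \cA$. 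The symmetric argument for the square
$$
\xymatrix{
L_f \ar@{=}[d] \ar[r]^-{(i_f)_1^{-1} \circ p_f} & Y \ar[d]^-{(i_f)_1}_-{\cong} \\
L_f \ar[r]^-{p_f} & \barr{Y}
}
$$
gives $(i_f)_1^{-1} \circ p_f \in \cB$.

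There is no real obstacle here; the whole content is the verification that the construction in the preceding lemma preserves membership in $\cA$ and $\cB$, which is automatic once one grants that these classes are stable under isomorphism in $\cD^{\Delta^1}$. I would simply state these two squares, invoke the hypothesis on $\cA$ and $\cB$, and conclude.
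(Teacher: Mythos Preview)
Your proposal is correct and is exactly what the paper intends: the corollary is stated without proof, as an immediate consequence of the preceding lemma, and the commutative diagram at the end of that lemma's proof is precisely the pair of squares you wrote down. Your only addition is making explicit that these squares are isomorphisms in $\cD^{\Delta^1}$, which is the right way to invoke the isomorphism-invariance hypothesis.
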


We are now ready to prove the first part of Theorem ~\ref{t_main}.

\begin{thm}\label{t_fact}
If $Mor(\mcal{C}) =^{func} M\circ N$ then $Mor(\barr{\Pro}(\mcal{C})) =^{func} Sp^{\cong}(M)\circ Lw^{\cong}(N)$.
\end{thm}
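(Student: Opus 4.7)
The strategy is to define a functor
\[
\Phi:\barr{\Pro}(\cC)^{\Delta^1}\to\barr{\Pro}(\cC)^{\Delta^2}
\]
as the composite $j_{\Delta^2}\circ\barr{R}\circ h_{\Delta^1}$, where $h_{\Delta^1}$ and $j_{\Delta^2}$ are the equivalences of Corollary~\ref{p:natural} and $\barr{R}:\barr{\Pro}(\cC^{\Delta^1})\to\barr{\Pro}(\cC^{\Delta^2})$ is a functorial Reedy factorization to be constructed. Once $\Phi$ is in place, checking that it is a section of the composition functor up to natural isomorphism and that the two legs of $\Phi(f)$ lie respectively in $Lw^{\cong}(N)$ and $Sp^{\cong}(M)$ will be almost formal. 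A single application of Corollary~\ref{c_wff_to_ff} then promotes the resulting pseudo-functorial factorization to an honest functorial one, since both classes are invariant under isomorphism.

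For the first ingredient, fix a cofinite directed poset $A$ of infinite height. The functorial factorization $(q,p)$ in $\cC$ upgrades the Reedy construction of Definition~\ref{d:reedy} to a functor $r_A:(\cC^A)^{\Delta^1}\to(\cC^A)^{\Delta^2}$: given a morphism of morphisms $(l,k):(f:C\to D)\to(f':C'\to D')$ in $(\cC^A)^{\Delta^1}$, one defines $r_A(l,k)$ by induction on degree, applying the functoriality of $(q,p)$ to the square whose rows are the maps $C(c)\to\lim_{A^{n-1}_c}H\times_{\lim_{A^{n-1}_c}D}D(c)$ and $C'(c)\to\lim_{A^{n-1}_c}H'\times_{\lim_{A^{n-1}_c}D'}D'(c)$, with right vertical arrow constructed from $r_A(l,k)|_{A^{n-1}_c}$ by the induction hypothesis. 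By construction $r_A$ is a strict section of the composition functor $(\cC^A)^{\Delta^2}\to(\cC^A)^{\Delta^1}$, the first leg of $r_A(f)$ is in $Lw(N)$, and the second is in $Sp(M)$.

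The main obstacle is promoting the pointwise Reedy functor to a functor $\barr{R}$ on $\barr{\Pro}(\cC^{\Delta^1})$, because a 1-morphism $(\alpha,\phi):f\to g$ with $f\in(\cC^A)^{\Delta^1}$ and $g\in(\cC^B)^{\Delta^1}$ requires comparing Reedy factorizations on different indexing sets. For strictly increasing $\alpha:B\to A$, the inclusions $\alpha(B^{n-1}_b)\subseteq A^{n-1}_{\alpha(b)}$ furnish canonical comparison maps between the relevant matching objects, to which the functoriality of $(q,p)$ assigns, by another degree induction, a canonical natural transformation $\rho_\alpha(f):\alpha^*r_A(f)\to r_B(\alpha^*f)$ in $(\cC^B)^{\Delta^2}$. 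We then define $\widetilde{R}(\alpha,\phi):=(\alpha,\,r_B(\phi)\circ\rho_\alpha(f))$ at the level of $\widetilde{\Pro}$. Compatibility with composition of 1-morphisms and with the partial order on 1-morphisms---so that $\widetilde{R}$ descends to a genuine functor $\barr{R}$ on the homotopy 1-category $\barr{\Pro}(\cC^{\Delta^1})$---is verified by further inductions of the same flavor, using once more the naturality inherent in a functorial factorization.

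With $\barr{R}$ in hand, naturality of the equivalences $j$ and $h$ with respect to the composition functor $\cC^{\Delta^2}\to\cC^{\Delta^1}$ identifies the induced composition functor $\barr{\Pro}(\cC^{\Delta^2})\to\barr{\Pro}(\cC^{\Delta^1})$ with the usual composition functor $\barr{\Pro}(\cC)^{\Delta^2}\to\barr{\Pro}(\cC)^{\Delta^1}$. Combined with the fact that $r_A$ is a strict section, this yields a natural isomorphism between the composition of $\Phi$ with the composition functor and $j_{\Delta^1}\circ h_{\Delta^1}\cong\mathrm{id}$, so $\Phi$ is a pseudo-functorial factorization in $\barr{\Pro}(\cC)$. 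The first leg of $\Phi(f)$ lies in $Lw^{\cong}(N)$ since, up to the isomorphism $f\cong j_{\Delta^1}\circ h_{\Delta^1}(f)$, it comes from a levelwise $N$-map; similarly the second leg lies in $Sp^{\cong}(M)$. Corollary~\ref{c_wff_to_ff} then converts $\Phi$ into the desired functorial factorization.
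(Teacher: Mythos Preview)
Your proposal is correct and follows essentially the same approach as the paper: reduce via Corollary~\ref{c_wff_to_ff} to a pseudo-functorial factorization, build it as $j_{\Delta^2}\circ s_1\circ h_{\Delta^1}$ where $s_1$ is a strict section to the composition functor on $\barr{\Pro}(\cC^{\Delta^1})$, and construct $s_1$ from the Reedy factorization using degree induction and the functoriality of the factorization in $\cC$. The only difference is organizational: you decompose the induced map on middle objects as $r_B(\phi)\circ\rho_\alpha(f)$, separating the change-of-indexing comparison $\rho_\alpha$ from the fixed-index functoriality $r_B$, whereas the paper builds the analogous map (its ``$\chi$-construction'') in a single induction; unwinding your composite recovers exactly the paper's $\chi$.
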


\begin{proof}
Assume that we are given a functorial factorization in $\cC$ into a morphism in $N$ followed by a morphism in $M$. We need to find a functorial factorization in $\barr{\Pro}(\cC)$ into a morphism in $Lw^{\cong}(N)$ followed by a morphism in $Sp^{\cong}(M)$ (see Definition ~\ref{d_ff}).


Since $Lw^{\cong}(N)$ and $Sp^{\cong}(M)$ are clearly invariant under isomorphisms, Corollary ~\ref{c_wff_to_ff} implies that it is enough to find a \emph{pseudo}-functorial factorization in $\barr{\Pro}(\cC)$ into a morphism in $Lw^{\cong}(N)$ followed by a morphism in $Sp^{\cong}(M)$.

Consider now the following commutative diagram of categories:
$$
\xymatrix{
\barr{\Pro}(\cC^{\Delta^2}) \ar[r]^{\sim}_{j_{\Delta^2}} \ar[d]^{\circ_1}& \barr{\Pro}(\cC)^{\Delta^2}\ar[d]^{\circ_2} \\
\barr{\Pro}(\cC^{\Delta^1}) \ar[r]^{\sim}_{j_{\Delta^1}} & \barr{\Pro}(\cC)^{\Delta^1},
}$$
where the $\circ_i$ are the different morphisms induced from composition. The horizontal maps are equivalences by Corollary ~\ref{p:natural}.

We see now that our goal is to construct  a section $s_2$ to $\circ_2$ up to a natural isomorphism.
Note that for this it is enough to find a section $s_1$ to $\circ_1$. Indeed assume we have such an $s_1$. In Definition ~\ref{d:h_D} we have constructed a functor $h_{\Delta^1}$ that is an inverse equivalence to $j_{\Delta^1}$ (see Corollary ~\ref{p:natural}). Thus if we define $s_2 := j_{\Delta^2}\circ s_1\circ h_{\Delta_1}$, we get:
$$ [\circ_2]\circ s_2 = [\circ_2]\circ  j_{\Delta^2}\circ s_1\circ h_{\Delta_1} $$
$$\: \: \: \: \: \: \: \: \: \: \: \: \: \: \: \:= j_{\Delta^1} \circ [\circ_1] \circ s_1\circ h_{\Delta_1} $$
$$\: \: \: \: \: \: \: \: \: \: \: \: \: \: \: \: \: \: \: \: \: \: \:= j_{\Delta^1} \circ h_{\Delta_1} \cong id_{\barr{\Pro}(\cC)^{\Delta^1}}.$$

So we are left with constructing a section to $\circ_1$:

%
%

$$s_1:\barr{\Pro}(\cC^{\Delta^1})\to \barr{\Pro}(\cC^{\Delta^2}).$$

Let $f$ be an object of $\barr{\Pro}(\cC^{\Delta^1})$. Then $f:E^A\to F^A$ is a natural transformation between objects in $\barr{\Pro}(\cC)$. We define the value of our functor on $f$ to be the Reedy factorization of $f$, described in Definition ~\ref{d:reedy}, but where we always use the given functorial factorization in $\cC$:
$$E\xrightarrow{g_f} H_f\xrightarrow{h_f} F.$$
As we have shown, we have: $f=h_f\circ g_f,g_f$ in $ Lw(N),h_f$ in $Sp(M)$.

Let $f$ and $t$ be objects of $\barr{\Pro}(\cC^{\Delta^1})$. Then $f:E^A\to F^A$ and $t:K^B\to G^B$ are natural transformations between objects in $\barr{\Pro}(\cC)$. Let $(\alpha,\Phi)$ be a representative to a morphism $f\to t$ in $\barr{\Pro}(\cC^{\Delta^1})$. Then $\alpha:B\to A$ is a strictly increasing function and $\Phi:\alpha^*f\to t$ is a morphism in $(\cC^{\Delta^1})^B\cong(\cC^B)^{\Delta^1}.$ Thus $\Phi=(\phi,\psi)$ is just a pair of morphisms in $\cC^B$ and we have a commutative diagram in $\cC^B$:
$$\xymatrix{{E\circ\alpha}\ar[r]^{{f_{\alpha}}}\ar[d]_{\phi} & {F\circ\alpha}\ar[d]^{\psi}\\
            K \ar[r]^t    &   G.}$$

Now consider the Reedy factorizations (see Definition ~\ref{d:reedy}) of $f$ and $t$:
$$E\xrightarrow{g_f} H_f\xrightarrow{h_f} F, \: \: \: \: \: \: \: \: K\xrightarrow{g_t} H_t\xrightarrow{h_t} G.$$
We need to construct a representative to a morphism in $\barr{\Pro}(\cC^{\Delta^2})$ between these Reedy factorizations. We take the strictly increasing function $B\to A$ to be just $\alpha$. All we need to construct is a natural transformation: $\chi:H_f\circ\alpha\to H_t$ such that the following diagram in $\cC^B$ commutes:
$$\xymatrix{{E}\circ\alpha\ar[r]^{(g_f)_{\alpha}}\ar[d]_{\phi} & H_{f}\circ\alpha\ar[r]^{(h_f)_{\alpha}}\ar[d]^{\chi} & {F}\circ\alpha\ar[d]^{\psi}\\
            {K} \ar[r]^{g_t} & H_t\ar[r]^{h_t}    &   {G}.}$$
We will define $\chi:H_f\circ\alpha\to H_t$ recursively, and refer to it as the $\chi$-construction.

Let $n\geq 0$. Suppose we have defined a natural transformation: $\chi:(H_f\circ\alpha)|_{B^{n-1}}\to H_t|_{B^{n-1}}$ such that the following diagram in $\cC^{B^{n-1}}$ commutes:
$$\xymatrix{({E}\circ\alpha)|_{B^{n-1}}\ar[r]^{(g_f)_{\alpha}}\ar[d]_{\phi} & (H_{f}\circ\alpha)|_{B^{n-1}}\ar[r]^{(h_f)_{\alpha}}\ar[d]^{\chi} & ({F}\circ\alpha)|_{B^{n-1}}\ar[d]^{\psi}\\
            {K}|_{B^{n-1}} \ar[r]^{g_t} & H_t|_{B^{n-1}}\ar[r]^{h_t}    &   {G}|_{B^{n-1}}}$$
(see Definition ~\ref{def_deg}).

Let $b$ be an element in $B^n\setminus B^{n-1}$.

There exists a unique $m\geq0$ such that $\alpha(b)$ is in $A^m\setminus A^{m-1}$. It is not hard to check, using the induction hypothesis and the assumptions on the datum we began with, that we have an induced commutative diagram:
$$\xymatrix{  & E(\alpha(b))\ar[dl]\ar[dd]\ar[rr]   &   &   K(b)\ar[dl]\ar[dd]\\
          F(\alpha(b))\ar[dd]\ar[rr] &  & G(b)\ar[dd] &           \\
            & \lim_{A_{\alpha(b)}^{m-1}}H_f\ar[dl]^r\ar[rr]   &   &   \lim_{B_{b}^{n-1}}H_t\ar[dl]^s\\
          \lim_{A_{\alpha(b)}^{m-1}}F\ar[rr] &  & \lim_{B_{b}^{n-1}}G}$$
(we remark that one of the reasons for demanding a \emph{strictly} increasing function in the definition of a 1-morphism is that otherwise we would not have the two bottom horizontal morphisms in the above diagram, see Remark ~\ref{r_strictly}).

Thus, there is an induced commutative diagram:
$$\xymatrix{ E(\alpha(b))\ar[d]\ar[r]   &  \lim_{A_{\alpha(b)}^{m-1}}H_f\times_{\lim_{A_{\alpha(b)}^{m-1}}F} F(\alpha(b))\ar[d] \\
 K(b)\ar[r] & \lim_{B_{b}^{n-1}}H_t\times_{\lim_{B_{b}^{n-1}}G} G(b).}$$

We apply the functorial factorizations in $\cC$ to the horizontal arrows in the above diagram and get a commutative diagram:
$$\xymatrix{ E(\alpha(b))\ar[d]\ar[r]   & H_f(\alpha(b))\ar[d]\ar[r]   & \lim_{A_{\alpha(b)}^{m-1}}H_f\times_{\lim_{A_{\alpha(b)}^{m-1}}F} F(\alpha(b))\ar[d] \\
 K(b)\ar[r] & H_t(b)\ar[r] & \lim_{B_{b}^{n-1}}H_t\times_{\lim_{B_{b}^{n-1}}G} G(b).}$$

It is not hard to verify that taking $\chi_b:H_f(\alpha(b))\to H_t(b)$ to be the morphism described in the diagram above completes the recursive definition.

We need to show that the morphism we have constructed in $\barr{\Pro}(\cC^{\Delta^2})$ between the Reedy factorizations does not depend on the choice of representative $(\alpha,\Phi)$ to the morphism $f\to t$ in $\barr{\Pro}(\cC^{\Delta^1})$. So let $(\alpha',\Phi')$ be another 1-morphism from $f$ to $t$.

Thus, $\alpha':B\to A$ is a strictly increasing function, $\Phi'=(\phi',\psi')$ is a pair of morphisms in $\cC^B$ and we have a commutative diagram in $\cC^B$:
$$\xymatrix{{E\circ\alpha'}\ar[r]^{{f_{\alpha'}}}\ar[d]_{\phi'} & {F\circ\alpha'}\ar[d]^{\psi'}\\
            K \ar[r]^t    &   G.}$$

We apply the $\chi$-construction to this new datum and obtain a natural transformation: $\chi':H_f\circ\alpha'\to H_t$.

\begin{lem}\label{l_reedy wd}
Suppose that $(\alpha',\Phi')\geq(\alpha,\Phi)$.
Then for every $b$ in $B$ we have: $\alpha'(b)\geq\alpha(b)$ and the following diagram commutes:
\[
\xymatrix{ & H_f(\alpha(b)) \ar[dr]^{\chi_b} & \\
H_f(\alpha'(b))\ar[ur]\ar[rr]^{\chi'_b} & & H_t(b).}
\]
In other words, we have an inequality of 1-morphisms from $H_f$ to $H_t$:
$$(\alpha',\chi')\geq(\alpha,\chi).$$
\end{lem}

\begin{proof}
$(\alpha',\Phi')\geq(\alpha,\Phi)$ means that for every $b$ in $B$ we have: $\alpha'(b)\geq\alpha(b)$ and the following diagrams commute:
\[
\xymatrix{ & E(\alpha(b)) \ar[dr]^{\phi_b} &  &  &   F(\alpha(b)) \ar[dr]^{\psi_b} &  \\
E(\alpha'(b))\ar[ur]\ar[rr]^{\phi'_b} & & K(b) &  F(\alpha'(b))\ar[ur]\ar[rr]^{\psi'_b} & & G(b).}
\]

We will prove the conclusion inductively.

Let $n\geq 0$. Suppose we have shown that for every $b$ in $B^{n-1}$ the following diagram commutes:
\[
\xymatrix{ & H_f(\alpha(b)) \ar[dr]^{\chi_b} & \\
H_f(\alpha'(b))\ar[ur]\ar[rr]^{\chi'_b} & & H_t(b).}
\]

Let $b$ be an element in $B^n\setminus B^{n-1}$.

There exists a unique $m\geq0$ such that $\alpha(b)$ is in $A^m\setminus A^{m-1}$. As we have shown, we have an induced commutative diagram:
$$\xymatrix{ E(\alpha(b))\ar[d]\ar[r]   &  \lim_{A_{\alpha(b)}^{m-1}}H_f\times_{\lim_{A_{\alpha(b)}^{m-1}}F} F(\alpha(b))\ar[d] \\
 K(b)\ar[r] & \lim_{B_{b}^{n-1}}H_t\times_{\lim_{B_{b}^{n-1}}G} G(b),}$$
and the map $\chi_b:H_f(\alpha(b))\to H_t(b)$ is just the map obtained when we apply the functorial factorizations in $\cC$ to the horizontal arrows in the diagram above:
$$\xymatrix{ E(\alpha(b))\ar[d]\ar[r]   & H_f(\alpha(b))\ar[d]\ar[r]   & \lim_{A_{\alpha(b)}^{m-1}}H_f\times_{\lim_{A_{\alpha(b)}^{m-1}}F} F(\alpha(b))\ar[d] \\
 K(b)\ar[r] & H_t(b)\ar[r] & \lim_{B_{b}^{n-1}}H_t\times_{\lim_{B_{b}^{n-1}}G} G(b).}$$

Similarly, there exists a unique $l\geq0$ such that $\alpha'(b)$ is in $ A^l\setminus A^{l-1}$, and we have an induced commutative diagram:
$$\xymatrix{ E(\alpha'(b))\ar[d]\ar[r]   &  \lim_{A_{\alpha'(b)}^{l-1}}H_f\times_{\lim_{A_{\alpha'(b)}^{l-1}}F} F(\alpha'(b))\ar[d] \\
 K(b)\ar[r] & \lim_{B_{b}^{n-1}}H_t\times_{\lim_{B_{b}^{n-1}}G} G(b).}$$
The map $\chi'_b:H_f(\alpha'(b))\to H_t(b)$ is the map obtained when we apply the functorial factorizations in $\cC$ to the horizontal arrows in the diagram above:
$$\xymatrix{ E(\alpha'(b))\ar[d]\ar[r]   & H_f(\alpha'(b))\ar[d]\ar[r]   & \lim_{A_{\alpha'(b)}^{l-1}}H_f\times_{\lim_{A_{\alpha'(b)}^{l-1}}F} F(\alpha'(b))\ar[d] \\
 K(b)\ar[r] & H_t(b)\ar[r] & \lim_{B_{b}^{n-1}}H_t\times_{\lim_{B_{b}^{n-1}}G} G(b).}$$

Since $\alpha'(b)\geq\alpha(b)$ we clearly have an induced commutative diagram:
$$\xymatrix{  & E(\alpha'(b))\ar[dl]\ar[dd]\ar[rr]   &   &   F(\alpha'(b))\ar[dl]\ar[dd]\\
          E(\alpha(b))\ar[dd]\ar[rr] &  & F(\alpha(b))\ar[dd] &           \\
            & \lim_{A_{\alpha'(b)}^{l-1}}H_f\ar[dl]^r\ar[rr]   &   &   \lim_{A_{\alpha'(b)}^{l-1}}F\ar[dl]^s\\
          \lim_{A_{\alpha(b)}^{m-1}}H_f\ar[rr] &  & \lim_{A_{\alpha(b)}^{m-1}}F}$$

Combining all the above and using the induction hypothesis and the assumptions of the lemma, we get an induced commutative diagram:
$$\xymatrix{  & E(\alpha'(b))\ar[dl]\ar[dd]\ar[r]   &   E(\alpha(b))\ar[dll]\ar[dd]\\
          K(b)\ar[dd] &  &           \\
            &  \lim_{A_{\alpha'(b)}^{l-1}}H_f\times_{\lim_{A_{\alpha'(b)}^{l-1}}F} F(\alpha'(b))\ar[dl]\ar[r]   &   \lim_{A_{\alpha(b)}^{m-1}}H_f\times_{\lim_{A_{\alpha(b)}^{m-1}}F} F(\alpha(b))\ar[dll]\\
          \lim_{B_{b}^{n-1}}H_t\times_{\lim_{B_{b}^{n-1}}G} G(b) &   &  }$$

Applying the functorial factorizations in $\cC$ to the vertical arrows in the diagram above gives us the inductive step.
\end{proof}

We need to show that the morphism we have constructed in $\barr{\Pro}(\cC^{\Delta^2})$ between the Reedy factorizations does not depend on the choice of representative $(\alpha,\Phi)$ to the morphism $f\to t$ in $\barr{\Pro}(\cC^{\Delta^1})$. So let $(\alpha',\Phi')$ be another representative.

Thus, $\alpha':B\to A$ is a strictly increasing function, $\Phi'=(\phi',\psi')$ is a pair of morphisms in $\cC^B$ and we have a commutative diagram in $\cC^B$:
$$\xymatrix{{E\circ\alpha'}\ar[r]^{{f_{\alpha'}}}\ar[d]_{\phi'} & {F\circ\alpha'}\ar[d]^{\psi'}\\
            K \ar[r]^t    &   G.}$$

We apply the $\chi$-construction to this new datum and obtain a natural transformation: $\chi':H_f\circ\alpha'\to H_t$.

The 1-morphisms $(\alpha,\Phi)$ and $(\alpha',\Phi')$ both represent the same morphism $f\to t$ in $\barr{\Pro}(\cC^{\Delta^1})$, so by Corollary ~\ref{c_directed} there exists a  1-morphism $({\alpha''},{\Phi''})$ from $f$ to $t$ such that $({\alpha''},{\Phi''})\geq(\alpha,\Phi),(\alpha',\Phi')$.

Thus, $\alpha'':B\to A$ is a strictly increasing function, $\Phi''=(\phi'',\psi'')$ is a pair of morphisms in $\cC^B$ and we have a commutative diagram in $\cC^B$:
$$\xymatrix{{E\circ\alpha''}\ar[r]^{{f_{\alpha''}}}\ar[d]_{\phi''} & {F\circ\alpha''}\ar[d]^{\psi''}\\
            K \ar[r]^t    &   G.}$$

We apply the $\chi$-construction to this new datum and obtain a natural transformation: $\chi'':H_f\circ\alpha''\to H_t$.

$({\alpha''},{\Phi''})\geq(\alpha,\Phi),(\alpha',\Phi')$ means that for every $b$ in $B$ we have: $\alpha'(b)\geq\alpha(b)$ and the following diagrams commute:

\[
\xymatrix{ & E(\alpha(b)) \ar[dr]^{\phi_b} &  &  &   E(\alpha'(b)) \ar[dr]^{\phi'_b} &  \\
E(\alpha''(b))\ar[ur]\ar[rr]^{\phi''_b} & & K(b) &  E(\alpha''(b))\ar[ur]\ar[rr]^{\phi''_b} & & K(b).}
\]

\[
\xymatrix{ & F(\alpha(b)) \ar[dr]^{\psi_b} &  &  &   F(\alpha'(b)) \ar[dr]^{\psi'_b} &  \\
F(\alpha''(b))\ar[ur]\ar[rr]^{\psi''_b} & & G(b) &  F(\alpha''(b))\ar[ur]\ar[rr]^{\psi''_b} & & G(b).}
\]

Thus, to get the desired result, it remains to show that for every $b$ in $B$ the following diagrams commute:

\[
\xymatrix{ & H_f(\alpha(b)) \ar[dr]^{\chi_b} &  &  &   H_f(\alpha'(b)) \ar[dr]^{\chi'_b} & \\
H_f(\alpha''(b))\ar[ur]\ar[rr]^{\chi''_b} & & H_t(b)  &  H_f(\alpha''(b))\ar[ur]\ar[rr]^{\chi''_b} & & H_t(b).}
\]
But this follows from Lemma ~\ref{l_reedy wd}.

It remains to verify that we have indeed defined a functor.

We first check that the identity goes to the identity.

Let $f$ be an object of $\barr{\Pro}(\cC^{\Delta^1})$. Then $f:E^A\to F^A$ is a natural transformation between objects in $\barr{\Pro}(\cC)$. Clearly $(\alpha,\Phi)=(\alpha,\phi,\psi)=(id_A,id_E,id_F)$ is a representative to the identity morphism $f\to f$ in $\barr{\Pro}(\cC^{\Delta^1})$.

We now need to apply the $\chi$-construction to $(\alpha,\phi,\psi)$. It is not hard to verify that we obtain the identity natural transformation: $\chi=id_{H_f}:H_f\circ\alpha\to H_f$. Thus the result of applying the functor to the identity is the identity.

We now check that there is compatibility with respect to composition.

Let $f,t,r$ be objects of $\barr{\Pro}(\cC^{\Delta^1})$. Then $f:E^A\to F^A,t:K^B\to G^B,r:L^C\to M^C$ are natural transformations between objects in $\barr{\Pro}(\cC)$. Let $(\alpha,\Phi)=(\alpha,\phi,\psi)$ be a representative to a morphism $f\to t$ in $\barr{\Pro}(\cC^{\Delta^1})$ and $(\beta,\Psi)=(\beta,\gamma,\delta)$ be a representative to a morphism $t\to r$ in $\barr{\Pro}(\cC^{\Delta^1})$. Then $$(\alpha\beta,\Psi\Phi_{\beta})=(\alpha\beta,\gamma\phi_{\beta},\delta\psi_{\beta})$$
is a representative to the composition of the above morphisms in $\barr{\Pro}(\cC^{\Delta^1})$.

We now apply the $\chi$-construction to $(\alpha,\phi,\psi)$, and get a natural transformation: $\chi:H_f\circ\alpha\to H_t$, and we apply the $\chi$-construction to $(\beta,\gamma,\delta)$, and get a natural transformation: $\epsilon:H_t\circ\beta\to H_r$.

It is not hard to verify that applying the $\chi$-construction to $(\alpha\beta,\gamma\phi_{\beta},\delta\psi_{\beta})$ yields the natural transformation: $\epsilon\chi_{\beta}:H_f\circ(\alpha\beta)\to H_r$. Thus, applying the $\chi$-construction to the composition $(\beta,\Psi)\circ(\alpha,\Phi)$ yields the composition of the 1-morphisms which are the $\chi$-constructions of $(\beta,\Psi)$ and $(\alpha,\Phi)$, as desired.

\end{proof}

\begin{cor}\label{c_wfs}
If $Mor(\mcal{C}) = ^{func} M\circ N$ and $N\perp M$, then $(Lw^{\cong}(N),R(Sp^{\cong}(M)))$ is a functorial weak factorization system in $\barr{\Pro}(\cC)$.
\end{cor}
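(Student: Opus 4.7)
The plan is to assemble this corollary from two results already available: Theorem~\ref{t_fact} (which gives the functorial factorization into $Lw^{\cong}(N)$ followed by $Sp^{\cong}(M)$) and Proposition~\ref{p_wfs} (which, under the same hypotheses, gives that $(Lw^{\cong}(N),R(Sp^{\cong}(M)))$ is a weak factorization system in $\barr{\Pro}(\cC)$). Unpacking the definition of a functorial weak factorization system, we need three things: a \emph{functorial} factorization into a morphism of $Lw^{\cong}(N)$ followed by a morphism of $R(Sp^{\cong}(M))$, and the two lifting identities $Lw^{\cong}(N)={}^{\perp}R(Sp^{\cong}(M))$ and $Lw^{\cong}(N)^{\perp}=R(Sp^{\cong}(M))$.

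The lifting identities require no further work. Indeed, Proposition~\ref{p_wfs} states that $(Lw^{\cong}(N),R(Sp^{\cong}(M)))$ is already a weak factorization system in $\barr{\Pro}(\cC)$ (the hypothesis there is the weaker $Mor(\cC)=M\circ N$, which is implied by $Mor(\cC)=^{func}M\circ N$); in particular, the two lifting orthogonality conditions hold.

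So the only thing to verify is the functorial factorization part. Theorem~\ref{t_fact} produces, under the hypothesis $Mor(\cC)=^{func}M\circ N$, a functor
$$\barr{\Pro}(\cC)^{\Delta^1}\to\barr{\Pro}(\cC)^{\Delta^2},\qquad (X\xrightarrow{f}Y)\mapsto (X\xrightarrow{q_f}L_f\xrightarrow{p_f}Y),$$
which is a section of the composition functor, with $q_f\in Lw^{\cong}(N)$ and $p_f\in Sp^{\cong}(M)$. Since the inclusion $Sp^{\cong}(M)\subseteq R(Sp^{\cong}(M))$ is obvious (every morphism is a retract of itself), the same functor serves as a functorial factorization into a morphism of $Lw^{\cong}(N)$ followed by a morphism of $R(Sp^{\cong}(M))$. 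This yields $Mor(\barr{\Pro}(\cC))=^{func}R(Sp^{\cong}(M))\circ Lw^{\cong}(N)$.

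Combining these observations concludes the proof. There is no real obstacle here; the corollary is a purely formal assembly of Theorem~\ref{t_fact} with Proposition~\ref{p_wfs}, and the only minor point worth remarking on is that the functorial factorization from Theorem~\ref{t_fact} lands in $Sp^{\cong}(M)$, not merely $R(Sp^{\cong}(M))$, so that no additional step (such as taking a retract functorially) is needed to match the target class.
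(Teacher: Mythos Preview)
Your proof is correct and follows exactly the paper's own approach: the paper's proof consists of the single sentence ``This follows from Theorem~\ref{t_fact} and Proposition~\ref{p_wfs},'' and your write-up simply unpacks this reference. The only additional content you supply is the (trivial) observation that $Sp^{\cong}(M)\subseteq R(Sp^{\cong}(M))$, which is implicit in the paper's one-line argument.
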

\begin{proof}
This follows from Theorem ~\ref{t_fact} and Proposition ~\ref{p_wfs}.
\end{proof}

\begin{rem}
In light of Corollary ~\ref{c_equiv}, Corollary ~\ref{c_wfs} also holds if we replace $\barr{\Pro}(\cC)$ by the usual ${\Pro}(\cC)$.
\end{rem}

Department of Mathematics, University of Muenster, Nordrhein-Westfalen, Germany.
\emph{E-mail address}:
\texttt{ilanbarnea770@gmail.com}

Department of Mathematics, Massachusetts Institute of Technology, Massachusetts, USA.
\emph{E-mail address}:
\texttt{schlank@math.mit.edu}

\end{document}